\newtheorem{theorem}{Theorem}[section]
\newtheorem{lemma}[theorem]{Lemma}
\newtheorem{proposition}[theorem]{Proposition}
\newtheorem{corollary}[theorem]{Corollary}
\theoremstyle{definition}
\newtheorem{example}[theorem]{Example}
\newtheorem{remark}[theorem]{Remark}
\numberwithin{equation}{section}
\renewcommand{\labelenumi}{\roman{enumi})}
\renewcommand\theenumi\labelenumi
\renewcommand{\leq}{\leqslant}
\renewcommand{\le}{\leqslant}
\renewcommand{\geq}{\geqslant}
\renewcommand{\ge}{\geqslant}
\newcommand{\real}{\mathds{R}}         
\newcommand{\nat}{\mathds{N}}
\newcommand{\Ee}{\mathds{E}}            
\newcommand{\Pp}{\mathds{P}}
\newcommand{\Bcal}{\mathcal{B}}
\newcommand{\Ccal}{\mathcal{C}}
\newcommand{\Mcal}{\mathcal{M}}
\newcommand{\dif}{\mathrm{d}}
\newcommand{\eup}{\mathrm{e}}
\newcommand{\I}{\mathds{1}}
\newcommand{\normal}{\color{black}}
\begin{document}
\title[Singular integrals of subordinators and SPDEs]{Singular integrals of subordinators with applications to structural properties of SPDEs}

\author[C.-S.~Deng]{Chang-Song Deng}
\address[C.-S.~Deng]{School of Mathematics and Statistics, Wuhan University, Wuhan 430072, China}
\email{dengcs@whu.edu.cn}

\author[R.L.~Schilling]{Ren\'{e} L.\ Schilling}
\address[R.L.\ Schilling]{%
TU Dresden, Fakult\"{a}t Mathematik, Institut f\"{u}r Ma\-the\-ma\-ti\-sche Sto\-cha\-stik, 01062 Dresden, Germany}
\email{rene.schilling@tu-dresden.de}

\author[L.~Xu]{Lihu Xu}
\address[L.~Xu]{Department of Mathematics, Faculty of Science and Technology, University of Macau, Macau S.A.R., China }
\email{lihuxu@um.edu.mo}

\keywords{subordinator; zero-one law; moment estimates; singular integral; stochastic convolution; invariant measures; accessibility; Galerkin approximation.}
\subjclass[2010]{60H15; 60G51; 60G52.}

\begin{abstract}
    We study stochastic integrals driven by a general subordinator and establish a zero-one law for the finiteness of the resulting integral as well as moment estimates. As an application, we use these results to obtain structural properties of SPDEs driven by multiplicative pure jump noise, which include (1) a maximal inequality for a multiplicative stochastic convolution $Z_t$, (2) a small ball probability of $Z_t$, (3) the existence of invariant measures and accessibility to zero of SPDEs, and (4) a Galerkin approximation of solutions to SPDEs.
\end{abstract}

\maketitle

\tableofcontents

\section{Introduction}\label{sub}

A subordinator $(S_t)_{t\geq 0}$ is an increasing L\'evy process on $[0,\infty)$ starting at $S_0=0$. As usual, we use a c\`adl\`ag (finite left limits, right-continuous) modification of $S_t$. The law of a subordinator is determined by the Laplace transform of the random variables $S_t$. Because of the independent and stationary increments property of a subordinator, its Laplace transform is of the form
\begin{gather*}
    \Ee\left[\eup^{-rS_t}\right] = \eup^{-t\phi(r)},\quad r>0,\,t\geq 0,
\end{gather*}
where the exponent $\phi:(0,\infty)\to (0,\infty)$ is a Bernstein function with $\phi(0+)=0$, i.e.\ a $C^\infty$-function such that $\phi\geq 0$ and with alternating derivatives $(-1)^{n+1} \phi^{(n)}\geq 0$, $n\in\nat$. Every such $\phi$ has a unique L\'{e}vy--Khintchine representation
\begin{gather*}
    \phi(r)
    = br+\int_{(0,\infty)}\left(1-\eup^{-rs}\right) \nu(\dif s)
\end{gather*}
with a drift parameter $b\geq 0$ and a L\'{e}vy measure $\nu$, i.e.\ a Radon measure on $(0,\infty)$ satisfying $\int_{(0,\infty)}(1\wedge s)\,\nu(\dif s)<\infty$. We use~\cite{SSV12} as a standard reference for Bernstein functions.

The parameters $b$ and $\nu$ also determine the structure of $S_t$ via the L\'evy--It\^o representation
\begin{gather*}
    S_t = bt + \sum_{0<r\leq t} \Delta S_r,
\end{gather*}
where $\Delta S_r = S_r - S_{r-}$ is the jump of $(S_t)_{t\geq 0}$ at time $t=r$. The jumps form a Poisson point process with intensity measure $\dif t\times \nu(\dif s)$; note that $S_0 = S_{0+}$, i.e.\ there is a.s.\ no instantaneous jump at time $t=0$. It is well known that $t\mapsto S_t$ is a.s.\ strictly increasing if $b>0$ or $\nu(0,\infty)=\infty$; this is also equivalent to saying that $\phi$ is an unbounded function.

Among the most important subordinators are the $\alpha$-stable subordinators $(0<\alpha<1)$ whose Bernstein functions are of the form $\phi(r) = r^\alpha$, i.e.\ $b=0$ and $\nu(\dif s) = \alpha \Gamma(1-\alpha)^{-1} s^{-\alpha-1}\,\dif s$.
We refer the reader to \cite{BoGrRy10, CHXZ17,ChKu03, De14,KiSoVo12,PaXiYa19,WaWa14} for results on $\alpha$-stable subordinate Brownian motion.

Since subordinators are a.s.\ increasing, we may use them as random time-changes of other stochastic processes. This procedure is called subordination (in the sense of Bochner) and it allows us to represent many L\'evy processes as time-changed (``subordinated'') Brownian motions; in this way we can get, for example, all symmetric stable L\'evy processes. Our standard reference for L\'evy processes and subordinators is~\cite{Ber96}.

We are interested in (stochastic) integrals of the following form
\begin{gather}\label{sub-e10}
    \int_0^\infty f(t)\,\dif S_t,
\end{gather}
where $f$ is a non-random integrand which may be a singular function, and we are going to establish a zero-one law for the finiteness and various moment formulas. A related zero-one
law for integral functionals of spectrally positive
L\'{e}vy processes can be found in \cite{LiZh18}.

As an application of these results we shall use them to study structural properties of SPDEs driven by multiplicative pure jump noise as follows. Let $(H, |\cdot|)$  be a separable Hilbert space, and $\mathds{W}=(W_t)_{t \ge 0}$ a cylindrical Wiener process on $H$ with filtration $(\mathscr{G}_t)_{t\geq 0}$, see e.g.\ \cite{DaZa92}. We consider the following SPDE:
\begin{equation} \label{int-e02}
\dif X_t=[-AX_t+F(X_t)]\,\dif t+Q(X_{t-})\,\dif W_{S_t}, \quad X_{0}=x \in H,
\end{equation}
where $\mathds{S}=(S_t)_{t \ge 0}$ is a subordinator with Bernstein function $\phi$. We assume that $\mathds S$ is independent of $\mathds{W}$; moreover we need:
\begin{align}
\label{A1}\tag{\bfseries A1}
    &\parbox[t]{.9\linewidth}{$Q:H \to  \mathcal L_{\mathrm{HS}}(H)$ is a bounded, Lipschitz-continuous function, taking values in the set $\mathcal L_{\mathrm{HS}}(H)$ of Hilbert--Schmidt operators on $H$, such that
    \begin{center}
            $\|Q\|_{\mathrm{HS},\infty}:=\sup_{x \in H} \|Q(x)\|_{\mathrm{HS}}<\infty$,\\[5pt]
            $\|Q(x)-Q(y)\|_{\mathrm{HS}} \leq C |x-y| \quad \forall x, y \in H.$
    \end{center}}
\\
\label{A2}\tag{\bfseries A2}
    &\parbox[t]{.9\linewidth}{$A$ is a self-adjoint operator such that there exists an orthonormal basis $(e_n)_{n\in\nat}$ of eigenvectors $A e_k=\gamma_k e_k$, $k \in \nat$, and the eigenvalues satisfy
    \begin{center}
    $
        0<\gamma_1 \leq \gamma_2 \leq \dots \leq \gamma_n \leq \dots, \qquad \lim_{n \to  \infty} \gamma_n=\infty.
    $
    \end{center}}
\\
\label{A3}\tag{\bfseries A3}
    &F:H \to  H \text{\ \ is a bounded, Lipschitz-continuous function.}
\end{align}
If $(\mathscr{F_t})_{t\geq 0}$ is the filtration generated by $(S_t)_{t\geq 0}$ and $(\mathscr{G}_{t})_{t\geq 0}$, then $(W_{S_t})_{t\geq 0}$ is adapted to the filtration $\mathscr{F}_{S_t} = \left\{F\in\mathscr{F}_\infty : F\cap\{S_t\leq r\}\in\mathscr{F}_r \text{\ \ for all $r\geq 0$}\right\}$.
By a standard Picard iteration argument, see e.g.\ \cite[Theorems 9.29, Theorem 9.34]{PeZa07}, the conditions \eqref{A1}--\eqref{A3} ensure that there is a unique $H$-valued c\`adl\`ag process $(X_t)_{t\geq 0}$ which is adapted to the filtration $\mathscr{F}_{S_t}$ of the driving noise $(W_{S_t})_{t\geq 0}$, and satisfies the SPDE
\begin{gather}\label{int-e10}
    X_{t}=\eup^{-tA}x+\int_0^t \eup^{-(t-r)A} F(X_r)\,\dif r+ \int_0^t \eup^{-(t-r)A} Q(X_{r-} ) \,\dif W_{S_r}.
\end{gather}

In order to study the SPDE \eqref{int-e02} we need to understand the following stochastic convolution
\begin{equation}\label{con-e02}
    Z_t
    = \int_0^t \eup^{-(t-r)A} Q(X_{r-})\,\dif W_{S_r}.
\end{equation}
Using our results on \eqref{sub-e10}, we will prove a maximal inequality and a small ball probability estimate of $Z_{t}$ in Section \ref{con}, which is crucial for the proof of structural properties of $X_{t}$ and approximation results. The structural properties include (1) a maximal inequality of multiplicative stochastic convolution $Z_t$ (Theorem~\ref{con-21}), (2) a small ball probability for $Z_t$ (Theorem~\ref{con-25}), (3) the existence of invariant measures and accessibility to zero of the SPDE (Theorem~\ref{inv-11}, Theorem~\ref{acc-15}), and (4) a Galerkin approximation for the solution of the SPDE (Theorem~\ref{gal-13}). For the study of structural properties of SPDEs driven by a pure jump noise, we refer the reader to \cite{PrShXuZa12,KhSc17,PeZa07, ZhZh15} and the references therein.

For the readers' convenience, let us briefly recall the following standard estimates which will be frequently used in the sequel. Denote by $\|A\| = \sup_{|x|\leq 1} |Ax|$ the operator norm induced by the Hilbert norm $|\cdot|$.
\begin{align}
\label{int-e20}
    \|A^\theta \eup^{-tA} \| &\leq C_\theta t^{-\theta}\quad\text{for all\ \ } \theta>0,\\
\label{int-e22}
    |A^\theta x| &\geq \gamma^\theta_1 |x| \quad\text{for all\ \ } x \in H,\\
\label{int-e24}
    \|\eup^{-tA}\| &\leq \eup^{-\gamma_1 t}.
\end{align}
The first inequality is from \cite[(3.2)]{WaXu18} or \cite[Lemma 2.3]{PrXuZa11}, the second and third inequalities are both due to the spectral gap of $A$. In fact, if $(e_n)_{n \in \mathds N}$ is an orthonormal basis of $H$, we have $x=\sum_{k\in \mathds N} a_k e_k$ with $a_k \in \mathds R$ for each $k$ and
\begin{gather*}
    |A^\theta x|^2
    =\left|\sum_{k \in \mathds N} a_k A^\theta e_k\right|^2
    =\left|\sum_{k \in \mathds N} \gamma_k^{\theta} a_k e_k\right|^2
    =\sum_{k \in \mathds N} |\gamma^\theta_k a_k|^2
    \geq \gamma^{2 \theta}_1 \sum_{k \in \mathds N} |a_k|^2
    =\gamma^{2 \theta}_1 |x|^2.
\end{gather*}
The other relations can be obtained by very similar arguments. As usual, we write $X\stackrel{d}{=} Y$ if the random variables $X$ and $Y$ have the same distribution.

\section{A zero-one law for integrals driven by subordinators}

Since the integrator $S_t$ is a.s.\ increasing, the integral \eqref{sub-e10} has a classical pathwise meaning as Lebesgue--Stieltjes integral and we can consider any measurable positive $f:(0,\infty)\to [0,\infty]$. The following simple but useful lemma on the characteristic functional of a subordinator will be crucial for our study.

\begin{lemma}[characteristic functional]\label{sub-21}
Let $(S_t)_{t\geq 0}$ be a subordinator with Bernstein function $\phi$. For any measurable and positive function  $f:(0,\infty)\to [0,\infty)$ the following equality holds:
\begin{gather*}
    \Ee\left(\exp\left[-\int_0^\infty f(t)\,\dif S_t\right]\right)
    =
    \exp\left[-\int_0^\infty\phi\left(f(t)\right) \dif t\right].
\end{gather*}
\end{lemma}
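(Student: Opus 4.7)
The plan is the standard two-step argument: verify the identity first for simple step functions using only the defining Laplace transform $\Ee[\eup^{-rS_t}]=\eup^{-t\phi(r)}$ together with the independent-increments property of $(S_t)$, and then extend to arbitrary measurable positive $f$ by monotone approximation and continuity of $\phi$.

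\textbf{Step 1 (simple functions).} Consider $f=\sum_{i=1}^{n} c_i\I_{(t_{i-1},t_i]}$ with $0=t_0<t_1<\dots<t_n$ and $c_i\geq 0$. Since $t\mapsto S_t$ is increasing, the pathwise Lebesgue--Stieltjes integral is just $\int_0^\infty f(t)\,\dif S_t = \sum_{i=1}^n c_i\bigl(S_{t_i}-S_{t_{i-1}}\bigr)$. Using the fact that the increments $S_{t_i}-S_{t_{i-1}}$ are independent, each with Laplace transform $\Ee[\eup^{-c(S_{t_i}-S_{t_{i-1}})}]=\eup^{-(t_i-t_{i-1})\phi(c)}$, I get
\begin{gather*}
    \Ee\left[\exp\left(-\int_0^\infty f(t)\,\dif S_t\right)\right]
    = \prod_{i=1}^n \eup^{-(t_i-t_{i-1})\phi(c_i)}
    = \exp\left(-\int_0^\infty \phi(f(t))\,\dif t\right),
\end{gather*}
because $\phi(0)=0$ and $\phi(f(t))=\phi(c_i)$ on $(t_{i-1},t_i]$.

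\textbf{Step 2 (general $f$).} For arbitrary measurable $f:(0,\infty)\to[0,\infty)$ I approximate from below by a sequence of simple step functions $f_n\uparrow f$ pointwise (e.g.\ dyadic discretization truncated on $(0,n]$). Since all $f_n$ are nonnegative and $t\mapsto S_t$ is monotone, monotone convergence for Lebesgue--Stieltjes integrals gives $\int_0^\infty f_n(t)\,\dif S_t \uparrow \int_0^\infty f(t)\,\dif S_t$ almost surely in $[0,\infty]$. Hence $\exp(-\int f_n\,\dif S_t)\downarrow \exp(-\int f\,\dif S_t)$ in $[0,1]$ (with the convention $\eup^{-\infty}=0$), and bounded convergence yields convergence of the left-hand sides. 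On the right-hand side, $\phi$ is continuous and increasing (indeed $C^\infty$ by the definition of Bernstein functions), so $\phi(f_n)\uparrow\phi(f)$ pointwise and another monotone convergence argument gives $\int_0^\infty \phi(f_n(t))\,\dif t\uparrow \int_0^\infty\phi(f(t))\,\dif t$ in $[0,\infty]$. Combining the limits with the identity from Step 1 completes the proof.

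\textbf{Expected difficulties.} The argument is essentially mechanical; there is no genuinely hard step. The only thing to watch is the consistent bookkeeping of $[0,\infty]$-valued integrals: one must check that the identity remains meaningful (and true) when either side is infinite, which is exactly what the monotone approximation delivers. As a sanity check, the same identity can be rederived via the L\'evy--It\^o representation $S_t=bt+\sum_{0<r\leq t}\Delta S_r$ by splitting the integral into the drift piece $b\int f(t)\,\dif t$ and the Poisson-integral piece $\int f(t)s\,N(\dif t,\dif s)$, and then applying the Campbell/exponential formula for Poisson random measures with intensity $\dif t\times\nu(\dif s)$; the two contributions recombine into $\exp(-\int\phi(f(t))\,\dif t)$ by the L\'evy--Khintchine formula for $\phi$. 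I would present only the first, more elementary, route in the paper.
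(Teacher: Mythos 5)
Your Step~1 coincides with the paper's. The issue is in Step~2. A monotone sequence of \emph{interval} step functions as in Step~1 cannot, in general, increase pointwise to an arbitrary measurable $f\geq 0$: the parenthetical ``dyadic discretization'' produces functions of the form $\sum_k c_k\I_{f^{-1}(I_k)\cap(0,n]}$ whose level sets are arbitrary Borel sets, not unions of intervals. (For instance $\I_A$ with $A$ a fat Cantor set is not an increasing a.e.\ limit of interval step functions, since any interval step function $\leq\I_A$ that is nonnegative must vanish a.e., $A$ being nowhere dense.) So the functions your monotone scheme produces are not covered by Step~1, and the argument as written does not close. To make your monotone route work you would first have to verify the identity for \emph{general} simple functions $\sum_i c_i\I_{A_i}$ with $A_i$ disjoint Borel sets, i.e.\ that $S(A):=\int_A\dif S$ satisfies $\Ee[\eup^{-rS(A)}]=\eup^{-\phi(r)\,\mathrm{Leb}(A)}$ with independence over disjoint sets. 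That fact is exactly what the L\'evy--It\^o / Campbell-formula computation you sketch at the end delivers, so the ``alternative route'' you decline to present is in fact the ingredient your first route is missing.

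For comparison, the paper keeps Step~1 identical but handles Step~2 differently: when $\int_0^\infty\phi(f(t))\,\dif t<\infty$ it approximates $\phi\circ f$ in $L^1(\dif t)$ by interval step functions and invokes a density argument; when the integral is infinite it first truncates $\phi$ to $\phi_n=\min\{\phi,n\}\I_{[0,n]}$ and then passes to the limit by monotone convergence. This stays entirely within interval step functions and hence within the scope of its own Step~1. Either fix — extending Step~1 via the Poisson random measure, or following the paper's $L^1$-density/truncation scheme — is acceptable, but one of them is required.
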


\begin{proof}
Assume first that $f(t)$ is a step function of the form $\sum_{i=1}^n f_{i-1} \I_{(t_{i-1},t_{i}]}(t)$, $f_i\geq 0$, $0\leq t_0 < t_1 < \dots < t_n < \infty$. Using the fact that a subordinator has stationary and independent increments gives
\begin{align*}
    &\Ee\left(\exp\left[-\int_0^\infty f(t)\,\dif S_t\right]\right)
    = \Ee\left(\exp\left[-\sum_{i=1}^n f_{i-1}\left(S_{t_i}-S_{t_{i-1}}\right)\right]\right)\\
    &\quad=\prod_{i=1}^n \Ee\left[\eup^{-f_{i-1}S_{t_i-t_{i-1}}}\right]
    =\prod_{i=1}^n \eup^{-\left(t_i-t_{i-1}\right)\phi\left(f_{i-1}\right)}\\
    &\quad=\exp\left[-\sum_{i=1}^n\phi(f_{i-1})\left(t_i-t_{i-1}\right)\right]
    =\exp\left[-\int_0^\infty\phi\big(f(t)\big)\,\dif t\right].
\end{align*}

If $f$ is a general positive measurable function such that $\int_0^\infty \phi(f(t))\,\dif t < \infty$, we can approximate $f$, hence $\phi\circ f$, in $L^1((0,\infty);\dif t)$-sense by step functions as above, and the claim follows by a standard density argument. If $\int_0^\infty \phi(f(t))\,\dif t = \infty$, we approximate $\phi$ by the increasing sequence $\phi_n(t) := \min\{\phi(t),n\} \I_{[0,n]}(t)$. Since $\int_0^\infty \phi_n(f(t))\,\dif t \leq n^2$, we approximate this, as before, by step functions and use a monotone convergence theorem.
\end{proof}

The first application of the characteristic functional is the following result on time reversals. Throughout the paper we will need the following elementary identities
\begin{gather}\label{sub-e14}
    s^p
    =
    \frac{p}{\Gamma(1-p)}\int_0^\infty \left(1-\eup^{-sr}\right)\frac{\dif r}{r^{p+1}},
    \quad s\geq 0,\;p\in(0,1),
    \\
    \label{sub-e16}
    s^p
    = \frac{1}{\Gamma(-p)}\int_0^\infty\eup^{-sr}\,\frac{\dif r}{r^{p+1}},
    \quad s>0,\; p<0.
\end{gather}

\begin{corollary}[time reversal]\label{sub-23}
    Let $(S_t)_{t\geq 0}$ be a subordinator with Bernstein function $\phi$, $T>0$, and $f:(0,\infty)\to [0,\infty)$ a measurable function. For any $-\infty < p<1$ it holds that
    \begin{gather*}
        \Ee\left[\left(\int_0^Tf(T-t)\,\dif S_t\right)^p\right]
        = \Ee\left[\left(\int_0^Tf(t)\,\dif S_t\right)^p\right] \in [0,+\infty].
    \end{gather*}
\end{corollary}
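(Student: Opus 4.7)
The heart of the matter is that Lemma~\ref{sub-21}, applied to $\lambda f\,\I_{(0,T]}$ for any $\lambda>0$, gives
\begin{gather*}
    \Ee\left[\exp\left(-\lambda\int_0^T f(t)\,\dif S_t\right)\right]
    = \exp\left[-\int_0^T\phi\!\left(\lambda f(t)\right)\dif t\right],
\end{gather*}
and the analogous formula with $f(T-t)$ in place of $f(t)$. The Lebesgue change of variable $s=T-t$ shows that
\begin{gather*}
    \int_0^T\phi\!\left(\lambda f(T-t)\right)\dif t
    = \int_0^T\phi\!\left(\lambda f(s)\right)\dif s,
\end{gather*}
so the Laplace transforms of the two (nonnegative) random variables coincide on $(0,\infty)$. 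Hence
\begin{gather*}
    \int_0^T f(T-t)\,\dif S_t \stackrel{d}{=} \int_0^T f(t)\,\dif S_t,
\end{gather*}
from which the equality of $p$-th moments is immediate for every real $p$ for which either side makes sense.

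If one prefers not to invoke the uniqueness of distributions with a given Laplace transform and instead wants to derive the moment identity directly, the plan is to combine Lemma~\ref{sub-21} with the integral representations \eqref{sub-e14} and \eqref{sub-e16}. For $0<p<1$, identity \eqref{sub-e14} applied pointwise to $s=\int_0^T f(t)\,\dif S_t$ together with Tonelli's theorem yields
\begin{gather*}
    \Ee\left[\left(\int_0^T f(t)\,\dif S_t\right)^p\right]
    = \frac{p}{\Gamma(1-p)}\int_0^\infty\!\left(1-\exp\left[-\int_0^T\phi(rf(t))\,\dif t\right]\right)\frac{\dif r}{r^{p+1}},
\end{gather*}
and the same formula (with $f(T-t)$) on the other side; the change of variable $t\mapsto T-t$ inside the inner integral again finishes the job. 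The case $p<0$ is handled identically, starting from \eqref{sub-e16}, and the case $p=0$ is trivial. Both expressions may be $+\infty$, which is precisely why the statement records the value in $[0,+\infty]$.

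The only mild subtlety I anticipate is verifying the applicability of Tonelli/Fubini in the $p<0$ case (where $s^p$ blows up near the origin) and making sure one does not implicitly require finiteness of the moment in order to justify the manipulation. This is easily dealt with because all integrands are nonnegative: both the characteristic-functional step and the representation of $s^p$ produce only nonnegative quantities, so Tonelli applies unconditionally and the equality between the two sides holds as an identity in $[0,+\infty]$, exactly as claimed. The first ``Laplace transform'' route is the cleanest and is the one I would write up in detail.
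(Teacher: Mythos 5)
Your proposal is correct, and your ``second route'' (Lemma~\ref{sub-21} combined with the representations \eqref{sub-e14}, \eqref{sub-e16} and Tonelli) is precisely the paper's own proof of Corollary~\ref{sub-23}. Your preferred ``Laplace-transform route'' is a genuinely different argument: instead of matching the $p$-th moments integral by integral, you note that $\int_0^T f(T-t)\,\dif S_t$ and $\int_0^T f(t)\,\dif S_t$ have the same Laplace transform and hence the same law on $[0,\infty]$, after which equality of every moment (indeed of any distributional functional, positive or negative, integer or fractional) follows at once. This is arguably cleaner and yields a strictly stronger conclusion (equality in distribution, not merely of moments). The one point worth making explicit in a write-up is that the random variables are $[0,\infty]$-valued and may charge $\{\infty\}$: uniqueness of the Laplace transform identifies the finite sub-probability measures on $[0,\infty)$, and letting $\lambda\downarrow 0$ shows the masses at $\infty$ also agree, so the identification of laws is complete.
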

\begin{proof}
    The case $p=0$ is trivial. Since
    \begin{gather*}
        \int_0^T\phi\big(f(T-t)\big)\,\dif t
        = \int_0^T\phi\big(f(t)\big)\,\dif t,
    \end{gather*}
    the assertion follows immediately from Lemma~\ref{sub-21}, Tonelli's theorem, and the identities \eqref{sub-e14}, \eqref{sub-e16}.
\end{proof}
In the following two sections we will obtain conditions ensuring the finiteness of the moments appearing in Corollary~\ref{sub-23}.

If $f:(0,\infty)\to [0,\infty)$ is a bounded measurable function, the integral $\int_0^\infty f(t)\,\dif S_t$ is finite if, and only if, the tail integrals $\int_n^\infty f(t)\,\dif S_t$, $n\in\nat$, are finite. This means that the set $\left\{\omega : \int_0^\infty f(t)\,\dif S_t(\omega) < \infty\right\}$ is a terminal event for the natural filtration of $(S_t)_{t\geq 0}$, hence it has probability either $0$ or $1$ by Kolmogorov's zero-one law. The following result contains both a generalization (to all positive $f$) and a criterion to decide whether the probability is $1$.

\begin{proposition}[zero-one law]\label{sub-25}
    Let $(S_t)_{t\geq 0}$ be a subordinator with Bernstein function $\phi$ and $f:(0,\infty)\to [0,\infty)$ a measurable function. The following assertions are equivalent:
\begin{enumerate}\setlength{\itemsep}{8pt}
    \item\label{sub-25-i}
    $\displaystyle\Pp\left(\int_0^\infty f(t)\,\dif S_t<\infty\right)>0$.

    \item\label{sub-25-ii}
    $\displaystyle\Pp\left(\int_0^\infty f(t)\,\dif S_t<\infty\right)=1$.

    \item\label{sub-25-iii}
    $\displaystyle\int_0^\infty\phi\big(f(t)\big)\,\dif t<\infty$.
\end{enumerate}
\end{proposition}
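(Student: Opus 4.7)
The plan is to reduce everything to the characteristic functional of Lemma~\ref{sub-21}. Writing $X := \int_0^\infty f(t)\,\dif S_t \in [0,\infty]$, that lemma gives
\[
    \Ee\left[\eup^{-X}\right] = \exp\left[-\int_0^\infty \phi(f(t))\,\dif t\right],
\]
so (iii) is precisely the statement that the right-hand side is strictly positive. Since $X$ is non-negative (possibly $+\infty$), one has $\Ee[\eup^{-X}] > 0 \iff \Pp(X < \infty) > 0$, which already yields the equivalence (i)$\Leftrightarrow$(iii). The implication (ii)$\Rightarrow$(i) is trivial, so it remains to prove (iii)$\Rightarrow$(ii).

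For this I would re-apply Lemma~\ref{sub-21} to the scaled integrand $\lambda f$ for $\lambda \in (0,1]$, obtaining
\[
    \Ee\left[\eup^{-\lambda X}\right] = \exp\left[-\int_0^\infty \phi(\lambda f(t))\,\dif t\right].
\]
Because $\phi$ is non-decreasing with $\phi(0+) = 0$, the integrand $\phi(\lambda f(t))$ is dominated by $\phi(f(t))$ (integrable by (iii)) and converges pointwise to $0$ as $\lambda \downarrow 0$. Dominated convergence then gives $\Ee[\eup^{-\lambda X}] \to 1$. On the probability side, $\eup^{-\lambda X} \uparrow \I_{\{X < \infty\}}$ monotonically as $\lambda \downarrow 0$, so monotone convergence forces $\Pp(X < \infty) = 1$, which is (ii).

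The main obstacle is precisely this step (iii)$\Rightarrow$(ii), that is, upgrading positive probability of finiteness to full probability of finiteness. The scaling trick $f \mapsto \lambda f$ is what closes the gap: the exponent $\int_0^\infty \phi(\lambda f(t))\,\dif t$ can be made arbitrarily small by choosing $\lambda$ small, thanks to $\phi(0+)=0$ together with the dominated convergence afforded by hypothesis (iii). Note that the Kolmogorov zero-one argument mentioned in the preamble covers only bounded $f$, so this analytic scaling argument is what gives the full generality claimed in the proposition.
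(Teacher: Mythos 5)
Your argument is correct and is essentially the paper's proof: the core step (iii)\;$\Rightarrow$\;(ii) uses the same scaling trick $f\mapsto\lambda f$ combined with Lemma~\ref{sub-21} and a limit $\lambda\downarrow 0$ (monotone/dominated convergence on both sides). Your observation that $\Ee[\eup^{-X}]>0\iff\Pp(X<\infty)>0$ gives the equivalence (i)\;$\Leftrightarrow$\;(iii) slightly more directly than the paper's contradiction argument, but the substance is the same.
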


\begin{proof}
\ref{sub-25-iii}~$\Rightarrow$~\ref{sub-25-ii}: If we use Lemma~\ref{sub-21} with $f$ replaced by $\lambda f$ for some $\lambda>0$ and combine it with the monotone convergence theorem we get
\begin{align*}
    \Pp\left(\int_0^\infty f(t)\,\dif S_t<\infty\right)
    &= \lim_{\lambda\to 0}\Ee\left(\exp\left[-\lambda\int_0^\infty f(t)\,\dif S_t\right]\I_{\left\{\int_0^\infty f(t)\,\dif S_t<\infty\right\}}\right)\\
    &= \lim_{\lambda\to 0}\Ee\left(\exp\left[-\lambda\int_0^\infty f(t)\,\dif S_t\right]\right)\\
    &=\lim_{\lambda\to 0}\exp\left[-\int_0^\infty\phi\big(\lambda f(t)\big)\,\dif t\right] \:=\: 1.
\end{align*}

\noindent
The direction~\ref{sub-25-ii}~$\Rightarrow$~\ref{sub-25-i} is obvious, and~\ref{sub-25-i}~$\Rightarrow$~\ref{sub-25-iii} follows thus: Suppose that $\int_0^\infty\phi\big(f(t)\big)\,\dif t = \infty$. By Lemma~\ref{sub-21},
\begin{gather*}
    \Ee\left(\exp\left[-\int_0^\infty f(t)\,\dif S_t\right]\right)=0,
    \quad\text{hence}\quad
    \Pp\left(\int_0^\infty f(t)\,\dif S_t=\infty\right)=1,
\end{gather*}
which contradicts~\ref{sub-25-i}. This completes the proof.
\end{proof}

\section{Moment formulas for singular integrals driven by a stable subordinator}\label{subsec:stable}

Throughout this section $(S_t)_{t\geq 0}$ is an $\alpha$-stable subordinator; the corresponding Bernstein function is of the form $\phi(r)=r^\alpha$, $\alpha\in (0,1)$.
For the special case with $f(t)=t^{-\theta}, \theta \in (0,\infty)$ in Proposition \ref{sub-31} below, moment estimates have been established in \cite{Xu18}.
\begin{proposition}\label{sub-31}
    Let $S_t$ be an $\alpha$-stable subordinator, $0<\alpha<1$. If $f:(0,\infty)\to [0,\infty)$ is a measurable  function such that $\mathrm{Leb}\{f>0\}>0$, then
    \begin{gather*}
        \Ee\left[\left(\int_0^\infty f(t)\,\dif S_t\right)^p\right]
        =
        \begin{cases}
        \displaystyle\frac{\Gamma\left(1-\frac{p}{\alpha}\right)}{\Gamma(1-p)}\left(\int_0^\infty f(t)^\alpha\,\dif t\right)^{\frac{p}{\alpha}},
        &\text{if $-\infty < p<\alpha$},\\
        \displaystyle\infty,
        &\text{if $p\geq\alpha$}.
        \end{cases}
    \end{gather*}
\end{proposition}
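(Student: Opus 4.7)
The proof reduces everything to the moments of the one-dimensional $\alpha$-stable variable $S_1$ via the Laplace-transform formula of Lemma~\ref{sub-21}. Writing $I:=\int_0^\infty f(t)\,\dif S_t$ and $C:=\int_0^\infty f(t)^\alpha\,\dif t\in(0,\infty]$, applying the lemma to $\lambda f$ for $\lambda>0$ gives
\[
    \Ee\bigl[\exp(-\lambda I)\bigr] \;=\; \exp(-\lambda^\alpha C),
\]
which identifies $I\stackrel{d}{=}C^{1/\alpha}S_1$; in particular $\Ee[I^p]=C^{p/\alpha}\,\Ee[S_1^p]$. The degenerate case $C=\infty$ is covered by Proposition~\ref{sub-25}, which forces $I=\infty$ almost surely, agreeing with the claimed formula for $p>0$ (where $\infty^{p/\alpha}=\infty$) and giving $\Ee[I^p]=0$ for $p<0$, consistent with $C^{p/\alpha}=0$. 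Thus it suffices to compute $\Ee[S_1^p]$.

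For $p<0$, I would insert the pointwise representation \eqref{sub-e16} into $\Ee[S_1^p]$ and swap expectation and the $\lambda$-integral by Tonelli (all integrands are non-negative), obtaining
\[
    \Ee[S_1^p] \;=\; \frac{1}{\Gamma(-p)}\int_0^\infty \exp(-\lambda^\alpha)\,\lambda^{-p-1}\,\dif\lambda.
\]
The substitution $u=\lambda^\alpha$ converts the right-hand side into a Gamma integral equal to $\Gamma(-p/\alpha)/(\alpha\,\Gamma(-p))$, which simplifies to $\Gamma(1-p/\alpha)/\Gamma(1-p)$ via the functional equation $\Gamma(z+1)=z\Gamma(z)$. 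For $0<p<\alpha$ the analogous strategy works with \eqref{sub-e14} replacing \eqref{sub-e16} (legitimate since $p<1$): Tonelli plus the same substitution reduces the expectation to $\tfrac{p}{\alpha\,\Gamma(1-p)}\int_0^\infty(1-e^{-u})u^{-p/\alpha-1}\,\dif u$, and a second application of \eqref{sub-e14} with $s=1$ evaluates this to the same closed form. The case $p=0$ is trivial.

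For $p\geq\alpha$ I would argue divergence in two regimes. If $\alpha\leq p<1$, Tonelli applied to \eqref{sub-e14} produces an integrand of order $\lambda^{\alpha-p-1}$ near $\lambda=0$ (since $1-e^{-\lambda^\alpha}\sim\lambda^\alpha$), which fails to be integrable precisely when $p\geq\alpha$. For $p\geq 1$, the classical heavy-tail estimate $\Pp(S_1>x)\asymp x^{-\alpha}$ for the one-sided $\alpha$-stable law immediately yields $\Ee[S_1^p]=\infty$. The main obstacle is purely organizational: keeping the sign cases of $p$ straight, justifying the Tonelli swaps from positivity, and handling the degenerate case $C=\infty$ via Proposition~\ref{sub-25}; every integral computation is routine once the scaling identity $I\stackrel{d}{=}C^{1/\alpha}S_1$ is in hand.
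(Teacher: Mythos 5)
Your proof is correct and follows essentially the same route as the paper: both rest on Lemma~\ref{sub-21}, the Gamma-integral identities \eqref{sub-e14}--\eqref{sub-e16}, Tonelli's theorem, and the change of variables that absorbs $C=\int_0^\infty f(t)^\alpha\,\dif t$; your explicit scaling identity $I\stackrel{d}{=}C^{1/\alpha}S_1$ simply packages that substitution more cleanly. The only real divergence is at $p\geq 1$, where the paper disposes of the divergence by Jensen's inequality applied to the already-computed $\alpha$-moment while you invoke the heavy-tail asymptotic $\Pp(S_1>x)\asymp x^{-\alpha}$ — both are fine, the paper's being marginally more self-contained.
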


\begin{proof}
Without loss of generality we may assume that $0<\int_0^\infty f(t)^\alpha\,\dif t<\infty$ and
$p\neq 0$. We distinguish between three cases.

\medskip\noindent
\emph{Case 1}: $0<p<1$. Combining the elementary
identity \eqref{sub-e14} with Tonelli's theorem and Lemma~\ref{sub-21}, yields
\begin{align*}
    \Ee\left[\left(\int_0^\infty f(t)\,\dif S_t\right)^p\right]
    &=\frac{p}{\Gamma(1-p)}\,\Ee\left[\int_0^\infty \left(1-\eup^{-r\int_0^\infty f(t)\,\dif S_t}\right) \frac{\dif r}{r^{p+1}}\right]\\
    &=\frac{p}{\Gamma(1-p)}\int_0^\infty \left(1-\eup^{-r^\alpha\int_0^\infty f(t)^\alpha\,\dif t}\right)
    \frac{\dif r}{r^{p+1}}.
\end{align*}
If we change variables according to $s=r^\alpha\int_0^\infty f(t)^\alpha\,\dif t$ and use~\eqref{sub-e14} once again, we obtain
 \begin{align*}
    \Ee\left[\left(\int_0^\infty f(t)\,\dif S_t\right)^p\right]
    &=\frac{p}{\alpha\Gamma(1-p)}\left(\int_0^\infty f(t)^\alpha\,\dif t\right)^{\frac{p}{\alpha}}\int_0^\infty\left(1-\eup^{-s}\right)
    s^{-\frac{p}{\alpha}-1}\,\dif s\\
    &=
    \begin{cases}
    \displaystyle\frac{\Gamma\left(1-\frac{p}{\alpha}\right)}{\Gamma(1-p)}\left(
    \int_0^\infty f(t)^\alpha\,\dif t\right)^{\frac{p}{\alpha}},
    &\text{if $p\in(0,\alpha)$},\\
    \displaystyle\infty,
    &\text{if $p\in[\alpha,1)$}.
    \end{cases}
\end{align*}

\medskip\noindent
\emph{Case 2}: $p\geq 1$. It follows from Jensen's inequality and the first case that
\begin{gather*}
    \Ee\left[\left(\int_0^\infty f(t)\,\dif S_t\right)^p\right]
    \geq
    \left(\Ee\left[\left(\int_0^\infty f(t)\,\dif S_t\right)^\alpha\right]\right)^{\frac{p}{\alpha}}
    =\infty.
\end{gather*}

\medskip\noindent
\emph{Case 3}: $p<0$. We use the identity
\eqref{sub-e16}, Tonelli's theorem
and Lemma~\ref{sub-21} to get
\begin{align*}
    \Ee\left[\left(\int_0^\infty f(t)\,\dif S_t\right)^p\right]
    &= \frac{1}{\Gamma(-p)}\,\Ee\left[\int_0^\infty \eup^{-r\int_0^\infty f(t)\,\dif S_t}\,\frac{\dif r}{r^{p+1}}\right]\\
    &= \frac{1}{\Gamma(-p)}\int_0^\infty\eup^{-r^\alpha\int_0^\infty f(t)^\alpha\,\dif t}\,\frac{\dif r}{r^{p+1}}\\
    &= \frac{1}{\alpha\Gamma(-p)}\left(\int_0^\infty f(t)^\alpha\,\dif t\right)^{\frac{p}{\alpha}}
    \int_0^\infty\eup^{-s}s^{-\frac{p}{\alpha}-1}\,\dif s\\
    &= \frac{\Gamma\left(-\frac{p}{\alpha}\right)}{\alpha\Gamma(-p)}\left(\int_0^\infty f(t)^\alpha\,\dif t\right)^{\frac{p}{\alpha}}\\
    &= \frac{\Gamma\left(1-\frac{p}{\alpha}\right)}{\Gamma(1-p)}\left(\int_0^\infty f(t)^\alpha\,\dif t\right)^{\frac{p}{\alpha}};
\end{align*}
in the last equality we use the functional equation $\Gamma(1+r)=r\Gamma(r)$ of the Gamma-function.
\end{proof}

\begin{corollary}\label{sub-33}
    Let $S_t$ be an $\alpha$-stable subordinator, $0<\alpha<1$, $p,\theta\in\real$ and $T>0$.
    \begin{enumerate}
    \item\label{sub-33-i}
    According to $\theta< \frac 1\alpha$ or $\theta\geq \frac 1\alpha$ one has with probability one
    \begin{gather*}
        \int_0^Tt^{-\theta}\,\dif S_t<\infty,
        \quad\text{resp.,}\quad
        =\infty,
    \end{gather*}
    and
    \begin{gather*}
        \Ee\left[\left(\int_0^Tt^{-\theta}\,\dif S_t\right)^p\right]
        =
        \begin{cases}
            \displaystyle\frac{\Gamma\left(1-\frac{p}{\alpha}\right)}{(1-\alpha\theta)^{\frac{p}{\alpha}}\Gamma(1-p)}\,
            T^{p(\frac 1\alpha-\theta)},
            &\text{if $\theta < \frac 1\alpha$ \& $p<\alpha$},\\
            0,
            &\text{if $\theta\geq \frac 1\alpha$ \& $p<0$},\\
            1,
            &\text{if $\theta\geq \frac 1\alpha$ \& $p=0$}\\
            \infty,
            &\text{if $\theta\geq \frac 1\alpha$ \& $p>0$}.
        \end{cases}
    \end{gather*}

    \item\label{sub-33-ii}
    According to $\theta > \frac 1\alpha$ or $\theta\leq \frac 1\alpha$ one has with probability one
    \begin{gather*}
        \int_T^\infty t^{-\theta}\,\dif S_t<\infty,
        \quad\text{resp.,}\quad
        =\infty,
    \end{gather*}
    and
    \begin{gather*}
        \Ee\left[\left(\int_T^\infty t^{-\theta}\,\dif S_t\right)^p\right]
        =
        \begin{cases}
        \displaystyle
        \frac{\Gamma\left(1-\frac{p}{\alpha}\right)}{(\alpha\theta-1)^{\frac{p}{\alpha}}\Gamma(1-p)}\,T^{p(\frac 1\alpha-\theta)},
        &\text{if $\theta> \frac 1\alpha$ \& $p<\alpha$},\\
        0,
        &\text{if $\theta\leq \frac 1\alpha$ \& $p<0$},\\
         1,
        &\text{ if $\theta\leq \frac 1\alpha$ \& $p=0$},\\
         \infty,
        &\text{ if $\theta\leq \frac 1\alpha$ \& $p>0$}.
        \end{cases}
    \end{gather*}

    \item\label{sub-33-iii} For all $\lambda>0$ one has
    \begin{gather*}
        \Ee\left[\left(\int_0^T\eup^{-\lambda t}\,\dif S_t\right)^p\right]
        =
        \begin{cases}
        \displaystyle
        \frac{\Gamma\left(1-\frac{p}{\alpha}\right)}{\Gamma(1-p)}
        \left(\frac{1-\eup^{-\alpha\lambda T}}{\alpha\lambda}\right)^{\frac{p}{\alpha}},
        &\text{if $p<\alpha$},\\
        \infty,
        &\text{if $p\geq\alpha$}.
        \end{cases}
    \end{gather*}
    \end{enumerate}
\end{corollary}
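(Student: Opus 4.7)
The plan is to derive all three parts as direct specializations of Proposition~\ref{sub-31}, combined with the zero-one law of Proposition~\ref{sub-25} to settle when the integrals are a.s.\ finite. In each case I choose the appropriate integrand $f$ (supported on $(0,T]$, on $(T,\infty)$, or of exponential decay on $(0,T]$) and compute the single scalar $I(f):=\int_0^\infty f(t)^\alpha\,\dif t$ that feeds into Proposition~\ref{sub-31}. Since $\phi(r)=r^\alpha$, Proposition~\ref{sub-25} says that $\int_0^\infty f(t)\,\dif S_t<\infty$ almost surely if and only if $I(f)<\infty$, which immediately gives the a.s.\ dichotomies stated in \ref{sub-33-i} and \ref{sub-33-ii}.

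For part \ref{sub-33-i} I take $f(t)=t^{-\theta}\I_{(0,T]}(t)$, so that $I(f)=\int_0^T t^{-\theta\alpha}\,\dif t$; this is finite iff $\theta\alpha<1$, in which case $I(f)=T^{1-\theta\alpha}/(1-\theta\alpha)$. Plugging into Proposition~\ref{sub-31} with $p<\alpha$ yields the claimed formula (the power $p(\tfrac1\alpha-\theta)$ comes from raising the $T$-factor to $p/\alpha$). When $\theta\geq 1/\alpha$ the zero-one law forces $\int_0^T t^{-\theta}\,\dif S_t=\infty$ almost surely; the three sub-cases $p<0$, $p=0$, $p>0$ then follow from the conventions $\infty^p=0,1,\infty$, respectively. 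Part \ref{sub-33-ii} is analogous with $f(t)=t^{-\theta}\I_{(T,\infty)}(t)$: now $I(f)=\int_T^\infty t^{-\theta\alpha}\,\dif t$ is finite iff $\theta\alpha>1$, equals $T^{1-\theta\alpha}/(\theta\alpha-1)$, and Proposition~\ref{sub-31} produces the stated expression; the degenerate regime $\theta\leq 1/\alpha$ is handled exactly as above.

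For part \ref{sub-33-iii} I take $f(t)=\eup^{-\lambda t}\I_{(0,T]}(t)$, so that
\begin{gather*}
    I(f)=\int_0^T \eup^{-\alpha\lambda t}\,\dif t=\frac{1-\eup^{-\alpha\lambda T}}{\alpha\lambda}
\end{gather*}
is always finite (hence the integral is a.s.\ finite and no zero-one dichotomy is needed). The value of the $p$-th moment, for $p<\alpha$, is again read off Proposition~\ref{sub-31}, and the blow-up for $p\geq\alpha$ is immediate from the same proposition.

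There is no genuinely hard step in the argument. The only minor issue is bookkeeping at the boundary of integrability: one must be careful that the a.s.-infinite cases are correctly distinguished from the cases where the integral is a.s.\ finite but has no $p$-th moment, and that the conventional values $\infty^p$ for $p<0$, $p=0$, $p>0$ match the entries in the piecewise formulas. Once this is organized cleanly, the corollary reduces to three elementary computations of $I(f)$ plus an appeal to the two main results of the previous sections.
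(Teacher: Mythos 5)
Your proposal is correct and takes essentially the same route as the paper: the paper also derives all three parts by specializing Proposition~\ref{sub-31} to the three integrands $f(t)=t^{-\theta}\I_{(0,T)}(t)$, $f(t)=t^{-\theta}\I_{(T,\infty)}(t)$, $f(t)=\eup^{-\lambda t}\I_{(0,T)}(t)$, together with Lemma~\ref{sub-21} for the a.s.\ dichotomies. Your explicit appeal to Proposition~\ref{sub-25} rather than Lemma~\ref{sub-21} for the zero-one statements is a cosmetic difference only, since Proposition~\ref{sub-25} is itself an immediate consequence of that lemma.
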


\begin{proof}
    The assertions~\ref{sub-33-i} and~\ref{sub-33-ii} follow from Lemma~\ref{sub-21} and Proposition~\ref{sub-31} with $f(t)=t^{-\theta}\I_{(0,T)}(t)$ and
    $f(t)=t^{-\theta}\I_{(T,\infty)}(t)$. In a similar way~\ref{sub-33-iii} can be obtained from Proposition~\ref{sub-31} if we use $f(t)=\eup^{-\lambda t}\I_{(0,T)}(t)$.
\end{proof}

\section{Moment estimates for singular integrals driven by a general subordinator}\label{subsec:general}

We will now consider a subordinator $(S_t)_{t\geq 0}$ with Bernstein function $\phi$. Other than in the stable case, we cannot hope for exact moment formulae. Therefore we aim for estimates of the following type:
\begin{align}
\label{sub-e40}
    \Ee\left[\left(\int_0^T t^{-\theta}\,\dif S_t\right)^p\right]
    &\leq CT^{-p\theta}\left[\phi^{-1}\left(\frac{1}{T}\right)\right]^{-p},
\\\label{sub-e42}
    \Ee \left[S_T^p\right]
    &\leq C \left[\phi^{-1}\left(\frac{1}{T}\right)\right]^{-p},
\\\label{sub-e44}
    \Ee\left[\left(\int_0^T \eup^{-\lambda t}\,\dif S_t\right)^p\right]
    &\leq C\left[\phi^{-1}\left(\frac{1}{T\wedge1}\right)\right]^{-p},
\end{align}
with constants $C$ depending on $p\in\real$, $\theta\geq 0$ and $\lambda>0$.

\begin{proposition}\label{sub-41}
Let $S_t$ be a subordinator with Bernstein function $\phi$.
\begin{enumerate}
\item\label{sub-41-i}
    The estimate~\eqref{sub-e40} holds for $p\leq 0$, $\theta\geq 0$ and all $T\in [1,\infty)$, if
    \begin{gather*}
        \liminf_{s\to \infty}\frac{\phi(s)}{\log s}>0
        \quad \text{and}\quad
        \liminf_{s\to 0}\frac{\phi(2s)}{\phi(s)}>1.
    \end{gather*}

\item\label{sub-41-ii}
    The estimate~\eqref{sub-e40} holds for $p\leq 0$, $\theta\geq 0$ and all $T\in (0,1]$, if
    \begin{gather*}
        \liminf_{s\to \infty}\frac{\phi(2s)}{\phi(s)}>1.
    \end{gather*}

\item\label{sub-41-iii}
    The estimate~\eqref{sub-e42} holds for all $T>0$  \textup{[}resp.\ $T\geq 1$\textup{]} if
    \begin{equation}\label{sub-e48}
        0 \leq p < \log_2\left(\inf_{s>0}\frac{\phi(2s)}{\phi(s)}\right)
        \qquad
        \left[\text{resp.}
        \quad 0 \leq p < \log_2\left(\liminf_{s\to 0}\frac{\phi(2s)}{\phi(s)}\right)
        \right].
    \end{equation}

\item\label{sub-41-iv}
    The estimate~\eqref{sub-e40} holds for all $T\geq 1$ if
    \begin{gather}\label{sub-e50}
        0 \leq p < \log_2\left(\liminf_{s\to 0}\frac{\phi(2s)}{\phi(s)}\right)
    \quad\text{and}\quad
        0 \leq \theta < \left[\log_2\left(\sup_{s>0}\frac{\phi(2s)}{\phi(s)}\right)\right]^{-1}.
    \end{gather}

\item\label{sub-41-v}
    The estimate~\eqref{sub-e40} holds for all $T\in(0,1]$ if
    \begin{equation}\label{sub-e52}
        0 \leq p < \log_2\left(\inf_{s>0}\frac{\phi(2s)}{\phi(s)}\right)
        \quad\text{and}\quad
        0 < \theta <\left[\log_2\left(\limsup_{s\to \infty}\frac{\phi(2s)}{\phi(s)}\right)\right]^{-1}.
    \end{equation}

\item\label{sub-41-vi}
    The estimate~\eqref{sub-e44} holds for all $T>0$, $\lambda>0$ and $p<0$, if
    \begin{gather*}
        \liminf_{s\to \infty}\frac{\phi(2s)}{\phi(s)}>1.
    \end{gather*}

\item\label{sub-41-vii}
    If $p>0$ and $\lambda>0$, then
    \begin{equation}\label{sub-e54}
        \int_0^1\frac{\phi(s)}{s^{p+1}}\,\dif s<\infty
        \iff
        \Ee\left[\left(\int_0^\infty \eup^{-\lambda t}\,\dif S_t\right)^p\right] < \infty.
    \end{equation}
\end{enumerate}
\end{proposition}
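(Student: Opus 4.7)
The plan is to apply a unified template to all seven parts. Combine Lemma~\ref{sub-21} with either~\eqref{sub-e14} (when $p\in(0,1)$) or~\eqref{sub-e16} (when $p<0$) and Tonelli's theorem to rewrite the $p$-th moment of $\int_0^\infty h(t)\,\dif S_t$ as a single $r$-integral: for $p>0$,
\[
    \Ee\left[\left(\int_0^\infty h(t)\,\dif S_t\right)^p\right] = \frac{p}{\Gamma(1-p)}\int_0^\infty\left(1-\exp\left[-\int_0^\infty\phi(rh(t))\,\dif t\right]\right)\frac{\dif r}{r^{p+1}},
\]
and analogously for $p<0$ via~\eqref{sub-e16} with the $(1-\cdot)$ removed and the Gamma factor adjusted. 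Here $h$ is the relevant integrand ($t^{-\theta}\I_{(0,T)}$, $\eup^{-\lambda t}\I_{(0,T)}$, or $\eup^{-\lambda t}$). A rescaling $r=\phi^{-1}(1/T_0)u$ (with $T_0\in\{T,T\wedge 1\}$, possibly multiplied by $T^\theta$) extracts the target prefactor $[\phi^{-1}(1/T_0)]^{-p}$, leaving a residual $u$-integral to bound uniformly in $T$. The analytic engine is that each doubling hypothesis on $\phi$ iterates to a power law: a bound $\inf/\liminf\phi(2s)/\phi(s)\ge\beta_->1$ on the relevant scale gives $\phi(cs)\ge C_-c^{\log_2\beta_-}\phi(s)$ for $c\ge 1$ on that scale, and dually an upper-doubling hypothesis gives $\phi(cs)\le C_+c^{\log_2\beta_+}\phi(s)$.

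For parts~(i)--(v), take $h(t)=t^{-\theta}\I_{(0,T)}(t)$. After $r=T^\theta\phi^{-1}(1/T)u$ and $t=Ts$, the inner exponent equals $T\int_0^1\phi(u\phi^{-1}(1/T)s^{-\theta})\,\dif s$. The dichotomy $T\ge 1$ versus $T\le 1$ aligns with $\phi^{-1}(1/T)$ being small (so doubling near $0$ is the relevant hypothesis) or large (doubling at infinity), which explains the $\liminf_{s\to 0}$/$\limsup_{s\to\infty}$ versus $\inf_s$/$\sup_s$ pattern in~\eqref{sub-e48}--\eqref{sub-e52}. Invoking the lower power law in $u$ and the upper one in $s^{-\theta}$, the residual integral splits into $\int_0^1 u^{\log_2\beta_- - p - 1}\dif u\cdot\int_0^1 s^{-\theta\log_2\beta_+}\dif s$ near $u=0$, plus a tail $\int_1^\infty u^{-p-1}(1\text{ or }\exp[\cdot])\,\dif u$, convergent precisely under the stated conditions on $p$ and $\theta$. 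Part~(iii) is the special case $\theta=0$ where the $s$-integral is trivial. For the $p\le 0$ parts~(i)--(ii), lower doubling at infinity gives super-polynomial decay of the exponential for $u\ge 1$, and the additional $\liminf\phi(s)/\log s>0$ hypothesis in~(i) handles the case $T$ large (where $\phi^{-1}(1/T)$ is small, so a slow-growth control on $\phi$ at infinity is needed to drive $-T\phi$ to $-\infty$).

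For parts~(vi)--(vii), the substitution $u=r\eup^{-\lambda t}$ turns $\int_0^T\phi(r\eup^{-\lambda t})\,\dif t$ into $\lambda^{-1}\int_{r\eup^{-\lambda T}}^r\phi(u)\,\dif u/u$. In~(vi), concavity of $\phi$ (equivalently, $\phi(u)/u$ decreasing) bounds this below by $\phi(r)(1-\eup^{-\lambda T})/\lambda$; rescaling $r=\phi^{-1}(1/c_T)v$ with $c_T=(1-\eup^{-\lambda T})/\lambda\asymp T\wedge 1$ extracts $[\phi^{-1}(1/(T\wedge 1))]^{-p}$, and the $\liminf_{s\to\infty}$ hypothesis delivers super-polynomial decay of $\exp[-c_T\phi(v\phi^{-1}(1/c_T))]$ on $v\ge 1$, giving integrability against $v^{-p-1}$ for all $p<0$. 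For~(vii), letting $T\to\infty$ yields $\int_0^\infty\phi(r\eup^{-\lambda t})\,\dif t=\lambda^{-1}\int_0^r\phi(u)\,\dif u/u$; inserting into~\eqref{sub-e14}, applying Fubini, and using $1-\eup^{-x}\asymp x\wedge 1$ reduces finiteness of the $p$-th moment (for $p\in(0,1)$) to $\int_0^1\phi(s)s^{-p-1}\dif s<\infty$, since the large-$r$ contribution is automatic ($1-\eup^{-x}\le 1$ and $\int_1^\infty r^{-p-1}\dif r<\infty$ for $p>0$). The principal obstacle is the bookkeeping needed to match each specific doubling hypothesis in~(i)--(v) to the correct scale of the argument $u\phi^{-1}(1/T)s^{-\theta}$ feeding into $\phi$ and to confirm that the stated exponent restrictions are the sharp ones produced by this method; once the doubling-to-power-law conversion is isolated as a standard preliminary about Bernstein functions, each of the seven parts reduces to a direct calculation along the lines above.
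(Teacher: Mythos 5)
For the positive-moment parts \ref{sub-41-iii}--\ref{sub-41-v} and for \ref{sub-41-vii}, your template is essentially the paper's: Lemma~\ref{sub-21} plus \eqref{sub-e14}, Tonelli, $1-\eup^{-x}\leq 1\wedge x$, and the conversion of the doubling hypotheses into power laws, which is exactly Remark~\ref{sub-43}.\ref{sub-43-d},\,\ref{sub-43-e}; the paper implements this by dyadic sums and an integration by parts where you use a substitution, but the mechanism and the resulting restrictions on $p$ and $\theta$ coincide, and your sketch of \ref{sub-41-vii} (including the $r_0$-type truncation hidden in ``$1-\eup^{-x}\asymp x\wedge 1$'') matches the paper's two-sided argument. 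One caveat for \ref{sub-41-v}: the $\limsup_{s\to\infty}$ hypothesis yields the upper power law only for large base points, whereas your scheme applies it at the base points $\phi^{-1}(1/T)u\,$, $u\leq 1$, which can be arbitrarily small; you must always double up/down from the fixed point $\phi^{-1}(1/T)$ (e.g.\ split the inner $s$-integral at $s=u^{1/\theta}$), which is the ``bookkeeping'' you defer -- it does work out, and it is precisely what the paper's decomposition does automatically. The genuinely different route is in the negative-moment parts \ref{sub-41-i}, \ref{sub-41-ii}, \ref{sub-41-vi}: the paper computes nothing there, but bounds $\int_0^T t^{-\theta}\,\dif S_t\geq T^{-\theta}S_T$ (resp.\ $\int_0^T\eup^{-\lambda t}\,\dif S_t\geq \eup^{-\lambda}S_{T\wedge 1}$, up to a constant) and quotes the negative-moment estimates of \cite[Theorem 2.1(ii)]{DSS17}, while you propose to reprove them via \eqref{sub-e16}. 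For \ref{sub-41-ii} and \ref{sub-41-vi} your argument is sound: there the doubling-at-infinity hypothesis applies at the base point $\phi^{-1}(1/(T\wedge1))\geq\phi^{-1}(1)>0$ and gives $T\phi\bigl(\phi^{-1}(1/T)u\bigr)\gtrsim u^{\kappa}$ with $\kappa>0$ for $u\geq1$, hence stretched-exponential decay and convergence against $u^{-p-1}$ for every $p<0$.

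In part \ref{sub-41-i}, however, there is a genuine gap: you invoke ``lower doubling at infinity'', which is not among the hypotheses -- \ref{sub-41-i} assumes doubling only near $0$ together with $\liminf_{s\to\infty}\phi(s)/\log s>0$. The doubling near $0$ controls $\phi\bigl(\phi^{-1}(1/T)u\bigr)$ only while the argument stays below a fixed threshold; beyond it the log-growth condition gives merely $\eup^{-T\phi(r)}\leq C r^{-cT}$, a polynomial decay of order $cT$ that does not dominate $r^{-p-1}=r^{|p|-1}$ uniformly in $T\geq1$ once $|p|\geq cT$. Concretely, the Gamma subordinator $\phi(s)=\log(1+s)$ satisfies both hypotheses of \ref{sub-41-i}, yet $\Ee[\eup^{-rS_T}]=(1+r)^{-T}$ and the integral $\int_0^\infty(1+r)^{-T}r^{-p-1}\,\dif r=\Gamma(-p)\,\Ee[S_T^{p}]$ diverges for $\theta=0$, $p<0$ and $1\leq T\leq|p|$; so the ``super-polynomial decay for $u\geq1$'' your sketch relies on is simply not available, and no estimate of this kind can be extracted in that regime by your computation. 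This delicate large-time/negative-moment case is exactly what the paper outsources, via the monotonicity reduction to $\Ee[S_T^p]$, to the precise formulation of \cite[Theorem 2.1(ii)]{DSS17}; a self-contained proof of \ref{sub-41-i} would have to reproduce that argument (with its exact quantifiers on $T$), not the scheme you describe.
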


Before we are going to prove Proposition~\ref{sub-41} we will add a few remarks on the assumptions made in this proposition and give some examples.

\begin{remark}\label{sub-43}
\begin{enumerate}
\item\label{sub-43-a}
Corollary~\ref{sub-33} shows that all assertions of Proposition~\ref{sub-41} are sharp for $\alpha$-stable subordinators.

\item\label{sub-43-b}
Since Bernstein functions are subadditive, we have $\phi(2s)\leq 2\phi(s)$ for all $s>0$. This means that both~\eqref{sub-e48} and the first condition in~\eqref{sub-e50} imply $p\in[0,1)$.

\item\label{sub-43-c}
Since Bernstein functions are concave, we get
\begin{gather*}
    \phi'(s)\leq\frac{\phi(s)}{s},\quad s>0,
\end{gather*}
and, therefore,
\begin{gather*}
    \int_0^1\frac{\phi(s)}{s^{p+1}}\,\dif s
    \geq\int_0^1\frac{\phi'(s)}{s^{p}}\,\dif s
    \geq\phi'(1)\int_0^1\frac{\dif s}{s^p}.
\end{gather*}
This means that~\eqref{sub-e54} can only happen if $p<1$.

\item\label{sub-43-d}
The condition~\eqref{sub-e48}
implies that there is some $\tilde{p}>p$ such that for all $s>0$
\begin{gather*}
    \frac{\phi(2s)}{\phi(s)}
    >
    2^{\tilde{p}}
    \quad\text{and for all $k\in\nat\cup\{0\}$},\quad
    \phi\left(2^{-k}s\right)
    \leq
    2^{-k\tilde{p}}\phi(s).
\end{gather*}
A routine monotonicity argument shows that this implies
\begin{equation}\label{sub-e56}
    \phi\left(2^{-x}s\right)
    \leq
    2^{\tilde p} 2^{-x\tilde{p}}\phi(s)\quad\text{for all $x\geq 0$ and $s>0$}.
\end{equation}
Under the alternative condition, this estimate is still valid for \emph{small} values $0<s<s_0$.

\item\label{sub-43-e}
The second condition in~\eqref{sub-e50} implies that there is some $0<\tilde{\theta}<1/\theta$ such that
\begin{gather*}
    \phi\left(2^ks\right)\leq2^{\tilde{\theta}k}
    \phi(s)\quad\text{for all $k\in\nat$ and $s>0$}.
\end{gather*}
A routine monotonicity argument shows that this implies
\begin{equation}\label{sub-e58}
    \phi\left(2^xs\right)
    \leq 2^{1/\theta} 2^{\tilde{\theta}x}
    \phi(s)\quad\text{for all $x\geq 0$ and $s>0$}.
\end{equation}
If we assume, instead, the weaker second condition in~\eqref{sub-e52}, the estimate~\eqref{sub-e58} is still valid for \emph{large} values $s > s_0$.
\end{enumerate}
\end{remark}

\begin{example}\label{sub-45}
    From \cite[Proposition~7.16(ii)]{SSV12} and \cite[table entry~16.2.6]{SSV12} we know that the functions
    \begin{align*}
        \phi(s) &= s^\alpha \log^\beta (1+s),    && 0<\alpha<1,\; 0\leq \beta \leq 1-\alpha\\
        \psi(s) &= s^\alpha \log^{-\beta} (1+s),    &&  0 \leq \beta \leq \alpha < 1\\
        \omega(s) &= s(1+s)^{-\alpha}, && 0 < \alpha < 1
    \end{align*}
    are (complete) Bernstein functions. The results of Proposition~\ref{sub-41} are summarized for these functions in Table~\ref{exa-tab}.

\begin{table}[ht]\centering
    \renewcommand*{\arraystretch}{1.3}
    \caption{Overview of the results of Proposition~\ref{sub-41} for some concrete examples.}\label{exa-tab}
    \begin{tabular}{@{}p{.17\linewidth}p{.25\linewidth}p{.25\linewidth}p{.25\linewidth}@{}} \toprule
    Estimate                     & $s^\alpha\log^\beta(1+s)$    & $s^\alpha\log^{-\beta}(1+s)$ & $s(1+s)^{-\alpha}$\\ \midrule
    \eqref{sub-e40},~~$T>0$       & $p\leq 0$,~~$\theta\geq 0$   & $p\leq 0$,~~$\theta\geq 0$   & $p\leq 0$,~~$\theta\geq 0$\\ \midrule
    \eqref{sub-e40},~~$T\geq 1$   & $0\leq p<\alpha+\beta$       & $0\leq p<\alpha-\beta$       & $0\leq p<1$\\
                            & $0\leq\theta<(\alpha+\beta)^{-1}$ & $0\leq\theta<\alpha^{-1}$    & $0\leq\theta<1$\\ \midrule
    \eqref{sub-e40},~~$T\leq 1$   & $0\leq p<\alpha$             & $0\leq p<\alpha-\beta$       & $0\leq p<1-\alpha$\\
                                 & $0\leq\theta<\alpha^{-1}$       & $0\leq\theta<\alpha^{-1}$    & $0\leq\theta<(1-\alpha)^{-1}$\\ \midrule
    \eqref{sub-e42},~~$T>0$     & $0\leq p<\alpha$             & $0\leq p<\alpha-\beta$       & $0\leq p<1-\alpha$\\ \midrule
    \eqref{sub-e44},~~$T>0$     & $p\leq 0$,~~$\lambda>0$      & $p\leq 0$,~~$\lambda>0$      & $p\leq 0$,~~$\lambda>0$\\ \midrule
    \eqref{sub-e54} applies      & $0\leq p<\alpha+\beta$       & $0\leq p<\alpha-\beta$       & $0\leq p<1$\\
                                 & $\lambda>0$                  & $\lambda>0$                  & $\lambda>0$\\ \bottomrule
    \end{tabular}
\end{table}
\end{example}

\begin{proof}[Proof of Proposition~\ref{sub-41}]
\ref{sub-41-i} \&~\ref{sub-41-ii}: Since $p<0$, the monotonicity of the integral gives
\begin{gather*}
    \Ee\left[\left(\int_0^T t^{-\theta}\,\dif S_t\right)^p\right]
    \leq \Ee\left[\left(\int_0^T T^{-\theta}\,\dif S_t\right)^p\right]
    = T^{-\theta p} \Ee\left[S_T^p\right],
\end{gather*}
so~\ref{sub-41-i} and~\ref{sub-41-ii} follow from the moment estimates in~\cite[Theorem 2.1(ii)]{DSS17}.

\medskip\ref{sub-41-iii}
By~\eqref{sub-e14}, Tonelli's theorem, Lemma~\ref{sub-21}, and the inequality $1-\eup^{-r}\leq1\wedge r$, $r\geq 0$,
we get for any $p\in(0,1)$ and $T>0$
\begin{equation}\label{sub-e60}
\begin{aligned}
    \Gamma(1-p)\Ee\left[S_T^p\right]
    &= p\Ee \left[\int_0^\infty \left(1-\eup^{-rS_T}\right) \frac{\dif r}{r^{p+1}}\right]\\
    &= p\int_0^\infty\left(1-\eup^{-T\phi(r)}\right) \frac{\dif r}{r^{p+1}}\\
    &\leq pT\int_0^{\phi^{-1}\left(\frac{1}{T}\right)}\phi(r)\,\frac{\dif r}{r^{p+1}}
    + p\int_{\phi^{-1}\left(\frac{1}{T}\right)}^\infty \frac{\dif r}{r^{p+1}}\\
    &= pT\int_0^{\phi^{-1}\left(\frac{1}{T}\right)} \phi(r)\frac{\dif r}{r^{p+1}}
    + \left[\phi^{-1}\left(\frac{1}{T}\right)\right]^{-p}.
\end{aligned}
\end{equation}
Since $\phi(0+)=0$, we get using integration
by parts,
\begin{equation}\label{sub-e62}
\begin{aligned}
    p\int_0^{\phi^{-1}\left(\frac{1}{T}\right)}\phi(r)\,\frac{\dif r}{r^{p+1}}
    &=\int_0^{\phi^{-1}\left(\frac{1}{T}\right)} s^{-p}\,\dif\phi(s)-\frac{1}{T}\left[\phi^{-1}\left(\frac{1}{T}\right)\right]^{-p}.
\end{aligned}
\end{equation}
Note that
\begin{equation}\label{sub-e64}
\begin{aligned}
    \int_0^{\phi^{-1}\left(\frac{1}{T}\right)}s^{-p}\,\dif\phi(s)
    &= \sum_{k=0}^\infty \int_{2^{-(k+1)}\phi^{-1}\left(\frac{1}{T}\right)}^{2^{-k}\phi^{-1} \left(\frac{1}{T}\right)}s^{-p}\,\dif\phi(s)\\
    &\leq \sum_{k=0}^\infty\left(\frac{1}{2^{k+1}}\phi^{-1}\left(\frac{1}{T}\right)\right)^{-p}
    \phi\left(\frac{1}{2^k}\phi^{-1}\left(\frac{1}{T}\right)\right)\\
    &= 2^p\left[\phi^{-1}\left(\frac{1}{T}\right)\right]^{-p}
    \sum_{k=0}^\infty2^{pk}\phi\left(\frac{1}{2^k}\phi^{-1}\left(\frac{1}{T}\right)\right).
\end{aligned}
\end{equation}
Combining~\eqref{sub-e60},~\eqref{sub-e62} and~\eqref{sub-e64}, we get for any $p\in(0,1)$ and $T>0$,
\begin{equation}\label{sub-e66}
    \Gamma(1-p)\Ee\left[S_T^p\right]
    \leq
    2^pT \left[\phi^{-1}\left(\frac{1}{T}\right)\right]^{-p}\sum_{k=0}^\infty2^{pk} \phi\left(\frac{1}{2^k}\phi^{-1}\left(\frac{1}{T}\right)\right).
\end{equation}
Since we assume~\eqref{sub-e48}, we may use the estimate from Remark~\ref{sub-43}.\ref{sub-43-d} in~\eqref{sub-e66}, and this gives for all $T>0$  [resp.\ $T\geq 1$]
\begin{equation}\label{sub-e68}
    \Gamma(1-p)\Ee\left[S_T^p\right]
    \leq
    \left(2^p\sum_{k=0}^\infty2^{-(\tilde{p}-p)k}\right)\left[\phi^{-1}\left(\frac{1}{T}\right)\right]^{-p}.
\end{equation}

\medskip\noindent\ref{sub-41-iv}
If $\theta=0$, we are in the situation of part~\ref{sub-41-iii} with $T\geq 1$.

Assume that $0<\theta < 1/p$. As in~\eqref{sub-e60}, we have for any $p\in(0,1)$ and $T>0$
\begin{align*}
    &\Gamma(1-p) \Ee\left[\left(\int_0^T t^{-\theta}\,\dif S_t\right)^p\right]\\
    &\quad= p\int_0^\infty\left(1-\exp\left[-\int_0^T\phi(rt^{-\theta})\,\dif t\right]\right)\frac{\dif r}{r^{p+1}}\\
    &\quad\leq p\int_0^{T^\theta\phi^{-1}\left(\frac{1}{T}\right)}\left(\int_0^T\phi(rt^{-\theta})\,\dif t\right)\frac{\dif r}{r^{p+1}}
        + p\int_{T^\theta\phi^{-1}\left(\frac{1}{T}\right)}^\infty\frac{\dif r}{r^{p+1}}\\
    &\quad= \frac{p}{\theta}\int_0^{T^\theta\phi^{-1}\left(\frac{1}{T}\right)}\left(\int_{rT^{-\theta}}^\infty\phi(s)\, \frac{\dif s}{s^{\frac1\theta+1}}\right)\frac{\dif r}{r^{-\frac1\theta+p+1}}
        + T^{-p\theta}\left[\phi^{-1}\left(\frac{1}{T}\right)\right]^{-p}\\
    &\quad= \frac{p}{\theta}\int_0^{\phi^{-1}\left(\frac{1}{T}\right)}\left(\int_0^{T^\theta s}\frac{\dif r}{r^{-\frac1\theta+p+1}}\right)\phi(s)
    \,\frac{\dif s}{s^{\frac1\theta+1}}\\
    &\qquad\mbox{}+\frac{p}{\theta}\int_{\phi^{-1}\left(\frac{1}{T}\right)}^\infty\left(\int_0^{T^\theta\phi^{-1}\left(\frac{1}{T}\right)}
    \frac{\dif r}{r^{-\frac1\theta+p+1}}\right)\phi(s)\,\frac{\dif s}{s^{\frac1\theta+1}}
    + T^{-p\theta}\left[\phi^{-1}\left(\frac{1}{T}\right)\right]^{-p}\\
    &\quad= \frac{T^{1-p\theta}}{1-p\theta}\,p\int_0^{\phi^{-1}\left(\frac{1}{T}\right)}\phi(s)\,\frac{\dif s}{s^{p+1}}\\
    &\qquad\mbox{}+\frac{T^{1-p\theta}}{1-p\theta}\,p\left[\phi^{-1}\left(\frac{1}{T}\right)\right]^{\frac1\theta-p}
    \int_{\phi^{-1}\left(\frac{1}{T}\right)}^\infty\phi(s)\,\frac{\dif s}{s^{\frac1\theta+1}}
    + T^{-p\theta}\left[\phi^{-1}\left(\frac{1}{T}\right)\right]^{-p}.
\end{align*}
In order to estimate the middle term in the above expression, we use integration by parts and get
\begin{align*}
    \int_{\phi^{-1}\left(\frac{1}{T}\right)}^\infty\phi(s)\,\frac{\dif s}{s^{\frac1\theta+1}}
    &= \theta\int_{\phi^{-1}\left(\frac{1}{T}\right)}^\infty r^{-\frac1\theta}\,\dif\phi(r)
    + \frac{\theta}{T}\left[\phi^{-1}\left(\frac{1}{T}\right)\right]^{-\frac1\theta}\\
    &= \theta\sum_{k=0}^\infty\int_{2^k\phi^{-1}\left(\frac{1}{T}\right)}^{2^{k+1}\phi^{-1}\left(\frac{1}{T}\right)} r^{-\frac1\theta}\,\dif\phi(r)
    +\frac{\theta}{T}\left[\phi^{-1}\left(\frac{1}{T}\right)\right]^{-\frac1\theta}\\
    &\leq \theta\left[\phi^{-1}\left(\frac{1}{T}\right)\right]^{-\frac1\theta}
    \sum_{k=0}^\infty2^{-\frac{k}{\theta}} \phi\left(2^{k+1}\phi^{-1}\left(\frac{1}{T}\right)\right)
    + \frac{\theta}{T}\left[\phi^{-1}\left(\frac{1}{T}\right)\right]^{-\frac1\theta}.
\end{align*}
Using~\eqref{sub-e62} and~\eqref{sub-e64} for the first integral, we obtain for any $p\in(0,1)$, $\theta\in(0,1/p)$ and $T>0$,
\begin{small}
\begin{equation}\label{sub-e70}
\begin{aligned}
    &(1-p\theta)\Gamma(1-p)\Ee\left[\left(\int_0^T t^{-\theta}\,\dif S_t\right)^p\right]\\
    &\leq T^{1-p\theta}\left[\phi^{-1}\left(\frac{1}{T}\right)\right]^{-p}
    \left[2^p\sum_{k=0}^\infty2^{pk}\phi\left(\frac{1}{2^k}\phi^{-1}\left(\frac{1}{T}\right)\right)
    +p\theta\sum_{k=0}^\infty 2^{-\frac{k}{\theta}}\phi\left(2^{k+1}\phi^{-1}\left(\frac{1}{T}\right)\right)\right].
\end{aligned}
\end{equation}\end{small}%
The conditions~\eqref{sub-e50} allow us (cf.\ Remark~\ref{sub-43}.\ref{sub-43-d}, \ref{sub-43-e}) to estimate the terms under the sum
for some $\tilde p > p$ and $\tilde\theta < 1/\theta$ for all large
values of $T$, say $T\geq 1$.
Therefore,
\begin{equation}\label{sub-e72}
\begin{aligned}
    (1-p\theta)&\Gamma(1-p)\Ee\left[\left(\int_0^T t^{-\theta}\,\dif S_t\right)^p\right]\\
    &\leq \left(2^p\sum_{k=0}^\infty2^{-(\tilde{p}-p)k} + p\theta2^{\tilde{\theta}}
    \sum_{k=0}^\infty2^{-\left(\frac{1}{\theta}-\tilde{\theta}\right)k}\right)
    T^{-p\theta}\left[\phi^{-1}\left(\frac{1}{T}\right)\right]^{-p},
\end{aligned}
\end{equation}
and~\ref{sub-41-iv} follows.

\medskip\noindent\ref{sub-41-v}
If we replace in the proof of~\ref{sub-41-iii} the conditions~\eqref{sub-e50} by~\eqref{sub-e52}, we get from from~\eqref{sub-e70} that~\eqref{sub-e72} holds for \emph{small} $T$, say $T\leq 1$,
and~\ref{sub-41-v} follows.

\medskip\noindent\ref{sub-41-vi}
Since $p<0$, we get by monotonicity
\begin{gather*}
    \Ee\left[\left(\int_0^T \eup^{-\lambda t}\,\dif S_t\right)^p\right]
    \leq
    \Ee\left[\left(\int_0^{T\wedge1} \eup^{-\lambda t}\,\dif S_t\right)^p\right]
    \leq
    \eup^{-p\lambda(T\wedge1)} \Ee \left[S_{T\wedge1}^p\right]
    \leq
    \eup^{-p\lambda}\Ee \left[S_{T\wedge1}^p\right].
\end{gather*}
Under the condition $\liminf_{s\to \infty} \phi(2s)/\phi(s)>1$ there is some constant $C_p$ such that
\begin{gather*}
    \Ee \left[S_{T\wedge1}^p\right]
    \leq C_p\left[\phi^{-1}\left(\frac{1}{T\wedge1}\right)\right]^{-p},
    \qquad T>0,
\end{gather*}
see~\cite[Theorem 2.1\,(ii)\,(c)]{DSS17}, and we get~\ref{sub-41-vi}.

\medskip\noindent\ref{sub-41-vii}
In view of Remark~\ref{sub-43}.\ref{sub-43-c} we may assume that $0<p<1$. We see with~\eqref{sub-e14}, Tonelli's theorem and Lemma~\ref{sub-21}
\begin{equation}\label{sub-e74}
\begin{aligned}
    \Gamma(1-p)\Ee\left[\left(\int_0^\infty \eup^{-\lambda t}\,\dif S_t\right)^p\right]
    &= p\Ee\left[\int_0^\infty\left(1-\exp\left[-r\int_0^\infty\eup^{-\lambda t}\,\dif S_t\right]\right)\frac{\dif r}{r^{p+1}}\right]\\
    &= p\int_0^\infty\left(1-\exp\left[-\int_0^\infty\phi\left(r\eup^{-\lambda t}\right) \dif t\right]\right)\frac{\dif r}{r^{p+1}}.
\end{aligned}
\end{equation}
Note the following elementary inequalities
\begin{gather}\label{sub-e78}
    \frac 12 (1\wedge x)
    \leq (1-\eup^{-1}) (1\wedge x)
    \leq 1-\eup^{-x}
    \leq 1\wedge x, \qquad x\geq 0.
\end{gather}
Assume that $\int_0^1 \phi(s) s^{-1-p}\,\dif s < \infty$. Using in~\eqref{sub-e74} the upper estimate from~\eqref{sub-e78} we get
\begin{align*}
    \Gamma(1-p)\Ee\left[\left(\int_0^\infty \eup^{-\lambda t}\,\dif S_t\right)^p\right]
    &\leq p\int_0^1\left(\int_0^\infty\phi\left(r\eup^{-\lambda t}\right) \dif t\right)\frac{\dif r}{r^{p+1}}+p\int_1^\infty\frac{\dif r}{r^{p+1}}\\
    &= \frac{p}{\lambda}\int_0^1\left(\int_s^1\frac{\dif r}{r^{p+1}}\right)\frac{\phi(s)}{s}\,\dif s+1\\
    &\leq\frac{p}{\lambda}\int_0^1\left(\int_s^\infty\frac{\dif r}{r^{p+1}}\right)\frac{\phi(s)}{s}\,\dif s+1\\
    &=\frac{1}{\lambda}\int_0^1\frac{\phi(s)}{s^{p+1}}\,\dif s +1.
\end{align*}
This proves the direction ``$\Rightarrow$'' in~\eqref{sub-e54}.

In order to see the other implication, we assume that $\Ee\left[\left(\int_0^\infty \eup^{-\lambda t}\,\dif S_t\right)^p\right]<\infty$. Because of Proposition~\ref{sub-25} (applied with $f(t) = \eup^{-\lambda t}$) this means that $\int_0^1 \phi(s) s^{-1}\,\dif s=\lambda\int_0^\infty\phi\left(\eup^{-\lambda t}\right)\,\dif t< \infty$. Applying the lower estimate from~\eqref{sub-e78} to~\eqref{sub-e74} gives
\begin{align*}
    \Gamma(1-p)\Ee\left[\left(\int_0^\infty \eup^{-\lambda t}\,\dif S_t\right)^p\right]
    &\geq \frac p2 \int_0^\infty 1\wedge \left(\int_0^\infty\phi\left(r\eup^{-\lambda t}\right) \dif t\right) \frac{\dif r}{r^{p+1}}\\
    &= \frac p2 \int_0^\infty 1\wedge \left(\frac 1\lambda \int_0^r \frac{\phi\left(s\right)}{s}\,\dif s\right) \frac{\dif r}{r^{p+1}}\\
    &\geq \frac p{2\lambda} \int_0^{r_0}\left(\int_0^r \frac{\phi\left(s\right)}{s}\,\dif s\right)\, \frac{\dif r}{r^{p+1}}
\intertext{where $r_0=r_0(\lambda)>0$ is so small that $\int_0^{r_0} \phi\left(s\right)s^{-1}\,\dif s \leq \lambda$. Thus, by Tonelli's theorem,}
    \Gamma(1-p)\Ee\left[\left(\int_0^\infty \eup^{-\lambda t}\,\dif S_t\right)^p\right]
    &\geq \frac p{2\lambda} \int_0^{r_0}\left(\int_s^{r_0}\frac{\dif r}{r^{p+1}}\right)\frac{\phi\left(s\right)}{s}\,\dif s\\
    &= \frac 1{2\lambda} \int_0^{r_0} \frac{\phi\left(s\right)}{s^{p+1}}\,\dif s
    - \frac 1{2\lambda r_0^p} \int_0^{r_0} \frac{\phi\left(s\right)}{s}\,\dif s.
\end{align*}
Since the second summand is finite, the proof is complete.
\end{proof}

\section{Stochastic convolutions}\label{con}
Recall that $\mathds{W} = (W_t)_{t\geq 0}$ is a cylindrical Brownian motion with values in a Hilbert space $H$, and that $\mathds{S} = (S_t)_{t\geq 0}$ is a subordinator which is independent of $\mathds{W}$. We assume that $t\mapsto S_t$ is a.s.\ strictly increasing; this is equivalent to assuming that $\lim_{\xi\to\infty}\phi(\xi)=\infty$. In order to justify the method of conditioning on the subordinator $\mathds{S}$, we introduce in this section a product construction which will allow us to freeze the subordinator, see e.g.  \cite{Zhang13,WaXuZh15}. 

Let $\Omega^\mathds{W}$ be the space of all continuous functions from $\omega: [0,\infty) \to H$, $t\mapsto\omega_t$, which vanish at $t=0$; we endow $\Omega^{\mathds{W}}$ with the topology of locally uniform convergence and the Wiener measure $\Pp^{\mathds{W}}$; under $\Pp^{\mathds{W}}$, the canonical process $(\omega_t)_{t \ge 0}$ is a cylindrical Brownian motion valued on $H$,  that is
\begin{gather*}
    W_t(\omega)=\omega_t, \quad t \geq 0.
\end{gather*}
Similarly, we construct a canonical realization of the subordinator $(S_t)_{t\geq 0}$ on the space $\Omega^{\mathds{S}}$ of all strictly increasing c\`adl\`ag functions $\ell : [0,\infty) \to [0,\infty)$, $t\mapsto\ell_t$, such that $\ell_0=0$; we endow $\mathds{S}$ with the Skorohod topology and an probability measure $\Pp^{\mathds{S}}$, such that $(S_t)_{t\geq 0}$ is the canonical coordinate process
\begin{gather*}
    S_t(\ell)=\ell_t, \quad t \geq 0.
\end{gather*}
We consider the SPDE \eqref{int-e02} on the product space
\begin{gather*}
    (\Omega, \mathcal F, \Pp)
    := (\Omega^{\mathds{W}} \times \Omega^{\mathds{S}}, \mathcal B(\Omega^{\mathds{W}} \times \Omega^{\mathds{S}}), \Pp^{\mathds{W}}\otimes \Pp^{\mathds{S}}).
\end{gather*}

Recall the unique solution $(X_t)_{t \ge 0}$ of the SPDE \eqref{int-e02} is defined as \eqref{int-e10}, and the stochastic convolution $(Z_t)_{t \ge 0}$ is defined as \eqref{con-e02}. For a $\ell \in \mathds{S}$, we have $S_t(\ell)=\ell_t$ for $t \ge 0$, i.e., the subordinator takes a sample path $(\ell_{t})_{t \ge 0}$,  let us consider the following SDE in the probability space
$(\Omega^{\mathds{W}}, \mathcal B(\Omega^{\mathds{W}}), \Pp^{\mathds{W}})$:
\begin{equation} \label{int-e02-1}
    \dif X^{\ell}_t=[-AX^{\ell}_t+F(X^{\ell}_t)]\,\dif t+Q(X^{\ell}_{t-})\,\dif W_{\ell_t}, \quad X^\ell_0=x.
\end{equation}
Note that $(W_{\ell_t})_{t\geq 0}$ is a c\`adl\`ag martingale (with deterministic jump-times) for the filtration $\mathscr{G}_{\ell_t}$ where $(\mathscr{G}_t)_{t\geq 0}$ is the filtration of the cylindrical Brownian motion $\mathbb{W}$. Identifying cylindrical Brownian motion with a Hilbert space valued Wiener process (in general, a larger Hilbert space $U\supset H$, cf.\ \cite[Theorem 7.13]{PeZa07}) we can use the results in \cite[\S 8.1]{PeZa07} to see that the bracket of the time-changed process satisfies $\langle W_{\ell},W_{\ell}\rangle_t = \langle W,W\rangle_{\ell_t}$, and the same relation holds for the operator (or tensor) bracket.

{By \cite[p.142]{PeZa07}}, the SPDE \eqref{int-e02-1} has a unique mild solution given by
\begin{gather*}
    X_t^\ell(x) = e^{-tA}x + \int_0^t e^{-(t-s)A}F(X_s^\ell(x))\,\dif s + Z^{\ell}_t
\end{gather*}
where $Z_t^{\ell}$ is the \normal following stochastic convolution:
\begin{equation}\label{con-e04}
    Z^{\ell}_t
    = \int_0^t \eup^{-(t-s)A} Q(X^{\ell}_{s-}(x))\,\dif W_{\ell_s}.
\end{equation}

To keep notation simple, we will suppress the initial condition $x$ and write $X^{\ell}_{s-} = X^{\ell}_{s-}(x)$. Moreover, we use $X=(X_t)_{t \ge 0}$, $Z=(Z_t)_{t \ge 0}$, $X^\ell=(X^\ell_t)_{t \ge 0}$ and $Z^\ell=(Z^\ell_t)_{t \ge 0}$.\normal
\begin{lemma} \label{l:DPWPS}
Let $F$ and $G$ be measurable functionals of $X$ and $Z$ respectively. We have
\begin{align*}
    \Ee[F(X)] &=\Ee^{{\mathds{S}}}\left[\Ee^{{\mathds{W}}}(F(X^\ell))|_{\ell=S}\right],\\
    \Ee[G(Z)] &=\Ee^{{\mathds{S}}}\left[\Ee^{{\mathds{W}}}(F(Z^\ell))|_{\ell=S}\right].
\end{align*}
\end{lemma}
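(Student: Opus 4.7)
The plan is to reduce the identity to Fubini's theorem applied on the product space $(\Omega^{\mathds W}\times\Omega^{\mathds S},\Pp^{\mathds W}\otimes\Pp^{\mathds S})$, once I can show that the globally-defined processes $X$ and $Z$ agree $\Pp$-a.s.\ with the ``frozen'' processes $X^\ell$ and $Z^\ell$ evaluated at $\ell=S(\omega^{\mathds S})$. In other words, the heart of the lemma is the \emph{disintegration identity}
\begin{gather*}
    X_t(\omega^{\mathds W},\omega^{\mathds S}) = X_t^{S(\omega^{\mathds S})}(\omega^{\mathds W}),\qquad
    Z_t(\omega^{\mathds W},\omega^{\mathds S}) = Z_t^{S(\omega^{\mathds S})}(\omega^{\mathds W}),\qquad \Pp\text{-a.s.}
\end{gather*}
Once this is established the claim follows from the Fubini--Tonelli theorem (applied to the non-negative functional $|F|\wedge N$ or via a standard monotone-class argument for bounded measurable $F$, then extended).

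First I would verify the joint measurability that makes Fubini applicable. For each fixed $\ell\in\Omega^{\mathds S}$ the process $(t,\omega^{\mathds W})\mapsto X_t^\ell(\omega^{\mathds W})$ is produced by Picard iteration in $(\Omega^{\mathds W},\mathscr G_{\ell_t},\Pp^{\mathds W})$; running that very same iteration on the product space with the filtration $\mathscr F_{S_t}$ and using that $\mathds W$ and $\mathds S$ are independent, every iterate $X^{(n)}_t(\omega^{\mathds W},\omega^{\mathds S})$ coincides with $X^{\ell,(n)}_t(\omega^{\mathds W})|_{\ell=S(\omega^{\mathds S})}$ (both are obtained from the same deterministic Picard scheme driven by the single time-changed noise $W_{S_t}=W_{\ell_t}|_{\ell=S}$). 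Passing to the a.s.\ limit and invoking pathwise uniqueness of~\eqref{int-e10} on the product space (provided by the Lipschitz assumptions \eqref{A1}--\eqref{A3}), I obtain the desired identification $X=X^S$ $\Pp$-a.s., and the analogous statement for $Z=Z^S$ by inserting the identification into~\eqref{con-e02}.

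With the disintegration in hand, the identity for $F(X)$ is immediate:
\begin{align*}
    \Ee[F(X)]
    &= \int_{\Omega^{\mathds S}}\!\int_{\Omega^{\mathds W}} F\bigl(X(\omega^{\mathds W},\omega^{\mathds S})\bigr)\,\Pp^{\mathds W}(\dif\omega^{\mathds W})\,\Pp^{\mathds S}(\dif\omega^{\mathds S})\\
    &= \int_{\Omega^{\mathds S}}\!\int_{\Omega^{\mathds W}} F\bigl(X^{S(\omega^{\mathds S})}(\omega^{\mathds W})\bigr)\,\Pp^{\mathds W}(\dif\omega^{\mathds W})\,\Pp^{\mathds S}(\dif\omega^{\mathds S})\\
    &= \Ee^{\mathds S}\!\left[\Ee^{\mathds W}\bigl(F(X^\ell)\bigr)\Big|_{\ell=S}\right],
\end{align*}
and the same reasoning, applied to $G$ and $Z$ via $Z=Z^S$, yields the second formula.

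The only real obstacle I expect is the careful bookkeeping of measurability in the parameter $\ell$: one must know that $\ell\mapsto\Ee^{\mathds W}[F(X^\ell)]$ is measurable with respect to the Skorohod Borel $\sigma$-algebra on $\Omega^{\mathds S}$, so that the outer integral $\Ee^{\mathds S}[\cdot]$ makes sense. This is where the Picard construction is convenient, because each iterate depends measurably on $\ell$ (the integrals $\int_0^t e^{-(t-r)A}Q(X^\ell_{r-})\,\dif W_{\ell_r}$ are continuous functionals of $\ell$ in probability), and measurability passes to the limit. Once this is in place, the rest is a clean application of Fubini--Tonelli and a monotone-class argument extending the identity from bounded continuous cylindrical $F,G$ to general measurable $F,G$.
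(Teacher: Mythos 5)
Your proof is correct and follows essentially the same route as the paper's: identify $X$ (resp.\ $Z$) as a measurable functional of the driving noise $W_S$, use independence of $\mathds{W}$ and $\mathds{S}$ together with Fubini on the product space $(\Omega^{\mathds{W}}\times\Omega^{\mathds{S}},\Pp^{\mathds{W}}\otimes\Pp^{\mathds{S}})$, and then recognise that freezing $\ell=S(\omega^{\mathds{S}})$ turns \eqref{int-e10} into \eqref{int-e02-1} by pathwise uniqueness. The paper compresses all this into writing $X=\bar X(x,W_S)$ and inserting $\ell$ for $S$; your remarks on Picard-iterate measurability in $\ell$ simply fill in a detail the paper leaves implicit.
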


\begin{proof}
Because of \eqref{int-e10}, $X$ is a measurable map of $x$ and  $W_{S}$ which we will denote by $X:=\bar X(x,W_{S})$. We have
\begin{align*}
\Ee[F(X)]&=\Ee^{\Pp^{\mathds{W}} \times \Pp^{\mathds{S}}} \left[F(\bar X(x,W_{S}))\right] \\
&=\int_{\mathds{S}} \Ee^{{\mathds{W}}} \left[F(\bar X(x,W_{\ell}))\right]  \Pp^{\mathds{S}}(\dif \ell) \\
&=\int_{\mathds{S}} \Ee^{{\mathds{W}}} \left[F(X^\ell(x))\right]  \Pp^{\mathds{S}}(\dif \ell)
\end{align*}
where the last equality follows from the fact that $\bar X(x,W_{\ell})$ is the solution to Equation~\eqref{int-e02-1}.

It is easy to see that $Z=(Z_t)_{t \ge 0}$ is a measurable functional of $X$ and $W_S$; we denote it by $Z=\bar Z(X,W_S)$, so for any measurable function $G$ of $Z$, we have
\begin{align*}
\Ee\left[G(Z)\right]&=\Ee^{\Pp^{\mathds{W}} \times \Pp^{\mathds{S}}}\left[G(\bar Z(X,W_S)\right] \\
&=\Ee^{\Pp^{\mathds{W}} \times \Pp^{\mathds{S}}}\left[G(\bar Z(\bar X(x,W_S),W_S)\right] \\
&=\int_{\mathds{S}} \Ee^{{\mathds{W}}}  \left[G(\bar Z(\bar X(x,W_{\ell}), W_\ell)\right]  \Pp^{\mathds{S}}(\dif \ell) \\
&=\int_{\mathds{S}} \Ee^{{\mathds{W}}}  \left[G(\bar Z(X^\ell(x), W_\ell)\right]  \Pp^{\mathds{S}}(\dif \ell) \\
&=\int_{\mathds{S}} \Ee^{{\mathds{W}}}  \left[G(Z^\ell)\right]  \Pp^{\mathds{S}}(\dif \ell)
\end{align*}
where the last inequality is due to $Z^\ell=\bar Z(X^\ell(x), W_\ell)$.
\end{proof}


\subsection{Estimate of the $p$-th moment of $Z_t$}
From Lemma \ref{l:DPWPS}, in order to estimate $X_t$ and $Z_t$, we can first estimate $X^\ell_t$ and $Z^\ell_t$ for a given $\ell \in \mathds{S}$ and then integrate the estimations over $\mathds{S}$.
\begin{theorem}\label{con-11}
    Let $(W_t)_{t\geq 0}$, $(S_t)_{t\geq 0}$ and $(Z_t)_{t\geq 0}$ be as above and assume \eqref{A1} and \eqref{A2}.
\begin{enumerate}
\item\label{con-11-i}
    Assume that $\inf_{s>0} \phi(2s)/\phi(s) > 1$ and let $p,\theta>0$ be such that
    \begin{gather*}
        \frac{p}{2}
        < \log_2\left(\inf_{s>0}\frac{\phi(2s)}{\phi(s)}\right)
        \leq \log_2\left(\limsup_{s\to\infty}\frac{\phi(2s)}{\phi(s)}\right)
        < \frac{1}{2\theta}.
    \end{gather*}
    There exists a constant $C=C(p,\theta)>0$ such that
	\begin{gather*}
        \Ee \left[|A^{\theta}Z_t|^p\right]
        \leq Ct^{-p\theta}\left[\phi^{-1}\left(\frac{1}{t}\right)\right]^{-\frac{p}{2}}
        \quad \text{for all $t\in(0,1]$}.
	\end{gather*}

\item\label{con-11-ii}
    Assume that $\liminf_{s\to 0}\phi(2s)/\phi(s) > 1$ and let $p,\theta>0$ be such that
    \begin{equation}\label{con-e12}
        \frac{p}{2}
        < \log_2\left(\liminf_{s\to 0} \frac{\phi(2s)}{\phi(s)}\right)
        \leq \log_2\left(\sup_{s>0} \frac{\phi(2s)}{\phi(s)}\right)
        < \frac{1}{2\theta},
    \end{equation}
    then one has
	\begin{gather*}
	   \sup_{t>0} \Ee \left[|A^{\theta}Z_t|^p\right] < \infty.
	\end{gather*}
\end{enumerate}
\end{theorem}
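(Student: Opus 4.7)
The plan is to condition on the subordinator, reducing the problem to a stochastic convolution driven by a continuous martingale with \emph{deterministic} bracket, and then to bound the resulting $p$-th moment by a singular Stieltjes integral of $\mathds{S}$ to which the machinery of Sections~\ref{subsec:stable} and~\ref{subsec:general} applies. By Lemma~\ref{l:DPWPS},
\begin{gather*}
    \Ee\bigl[|A^\theta Z_t|^p\bigr] = \Ee^{\mathds{S}}\bigl[\Ee^{\mathds{W}}(|A^\theta Z^\ell_t|^p)\big|_{\ell=S}\bigr].
\end{gather*}
For a fixed path $\ell$, $(W_{\ell_s})_{s\geq 0}$ is a continuous martingale with operator bracket $\ell_s I$, so the Burkholder--Davis--Gundy inequality applied to the stochastic convolution~\eqref{con-e04}, combined with $\|Q(x)\|_{\mathrm{HS}}\leq\|Q\|_{\mathrm{HS},\infty}$, yields
\begin{gather*}
    \Ee^{\mathds{W}}\bigl[|A^\theta Z^\ell_t|^p\bigr] \leq C \left(\int_0^t \|A^\theta \eup^{-(t-s)A}\|^{2}\,\dif\ell_s\right)^{p/2}.
\end{gather*}

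For~\ref{con-11-i}, substituting the bound $\|A^\theta \eup^{-uA}\|\leq C_\theta u^{-\theta}$ from~\eqref{int-e20}, taking $\Ee^{\mathds{S}}$, and applying the time-reversal identity of Corollary~\ref{sub-23} (whose hypothesis $p/2<1$ is guaranteed by Remark~\ref{sub-43}.\ref{sub-43-b}) leads to
\begin{gather*}
    \Ee\bigl[|A^\theta Z_t|^p\bigr] \leq C\,\Ee\left[\left(\int_0^t s^{-2\theta}\,\dif S_s\right)^{p/2}\right].
\end{gather*}
Proposition~\ref{sub-41}.\ref{sub-41-v} applied with exponents $p/2$ and $2\theta$ --- whose hypotheses~\eqref{sub-e52} are exactly those of~\ref{con-11-i} --- then delivers the claimed bound $C t^{-p\theta}[\phi^{-1}(1/t)]^{-p/2}$ for all $t\in(0,1]$.

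For~\ref{con-11-ii} we use the sharper operator estimate $\|A^\theta \eup^{-uA}\|\leq C_\theta u^{-\theta}\eup^{-\gamma_1 u/2}$, obtained via the factorization $\eup^{-uA}=\eup^{-uA/2}\eup^{-uA/2}$ together with~\eqref{int-e20} and~\eqref{int-e24}. After time reversal one must control
\begin{gather*}
    \Ee\left[\left(\int_0^t s^{-2\theta}\eup^{-\gamma_1 s}\,\dif S_s\right)^{p/2}\right].
\end{gather*}
Splitting at $s=1$, using the sub-additivity $(a+b)^{p/2}\leq a^{p/2}+b^{p/2}$ (valid since $p/2<1$) with the trivial majorants $s^{-2\theta}\eup^{-\gamma_1 s}\leq s^{-2\theta}$ on $(0,1]$ and $s^{-2\theta}\eup^{-\gamma_1 s}\leq \eup^{-\gamma_1 s}$ on $[1,\infty)$, the above expectation is bounded uniformly in $t>0$ by $\Ee[(\int_0^1 s^{-2\theta}\,\dif S_s)^{p/2}] + \Ee[(\int_0^\infty \eup^{-\gamma_1 s}\,\dif S_s)^{p/2}]$. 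The first summand is finite by Proposition~\ref{sub-41}.\ref{sub-41-iv} at $T=1$, whose hypotheses~\eqref{sub-e50} coincide with~\eqref{con-e12}. The second is finite by Proposition~\ref{sub-41}.\ref{sub-41-vii}, because $p/2<\log_2(\liminf_{s\to 0}\phi(2s)/\phi(s))$ together with Remark~\ref{sub-43}.\ref{sub-43-d} yields $\phi(s)\leq C s^{p/2+\delta}$ near zero for some $\delta>0$, hence $\int_0^1 \phi(s) s^{-p/2-1}\,\dif s<\infty$.

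The principal obstacle is the $t$-uniformity demanded by~\ref{con-11-ii}: discarding the exponential decay of $\|A^\theta \eup^{-uA}\|$ and applying Proposition~\ref{sub-41}.\ref{sub-41-iv} directly to $\int_0^t s^{-2\theta}\,\dif S_s$ yields a bound of order $t^{-p\theta}[\phi^{-1}(1/t)]^{-p/2}$ which, under~\eqref{con-e12}, diverges as $t\to\infty$ (for instance, like $t^{p(1/(2\alpha)-\theta)}$ in the $\alpha$-stable case, where the exponent is strictly positive). The spectral-gap enhanced semigroup bound is precisely what permits the splitting at $s=1$ and decouples the singular behaviour near zero from the large-$s$ tail.
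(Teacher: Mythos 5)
Your proof follows the paper's argument essentially step for step: conditioning via Lemma~\ref{l:DPWPS}, bounding the $\Pp^{\mathds{W}}$-moment by the deterministic bracket $\bigl(\int_0^t\|A^\theta\eup^{-(t-s)A}\|^2\,\dif\ell_s\bigr)^{p/2}$ (the paper uses Jensen plus It\^o's isometry where you invoke BDG, which gives the same thing for $p\leq 2$), time reversal via Corollary~\ref{sub-23}, and then Proposition~\ref{sub-41}.\ref{sub-41-v} for part~\ref{con-11-i} and the split at $s=1$ with Proposition~\ref{sub-41}.\ref{sub-41-iv} and~\ref{sub-41-vii} for part~\ref{con-11-ii}. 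The one cosmetic difference is that the paper carries the spectral-gap factor $\eup^{-\gamma_1(t-s)}$ through both parts and only discards it at the end in case~\ref{con-11-i}, whereas you introduce it only in case~\ref{con-11-ii}; the substance and all key lemmas invoked are the same.
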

\begin{proof}
Since $\phi(2s)\leq 2\phi(s)$, our assumptions guarantee that $p\leq 2$. An application of Jensen's inequality and It\^o's isometry (e.g.\ \cite[Theorem 8.7]{PeZa07}) \normal  shows
\begin{align*}
    \Ee^{\mathds{W}} \left[\left|A^{\theta} Z^{\ell}_{t}\right|^{p}\right]
    &= \Ee^{\mathds{W}} \left[\left|\int_{0}^{t} A^{\theta} \eup^{-(t-s)A} Q(X^{\ell}_{s-})\,\dif W_{\ell_{s}} \right|^{p}\right]\\
    &\leq \left(\Ee^{\mathds{W}} \left[\left|\int_{0}^{t} A^{\theta} \eup^{-(t-s)A} Q(X^{\ell}_{s-})\,\dif W_{\ell_{s}} \right|^{2}\right]\right)^{\frac{p}{2}}\\
    &\leq \left(\int_{0}^{t} \|A^{\theta}\eup^{-(t-s)A}\|^{2}\cdot \|Q\|^{2}_{\mathrm{HS},\infty}\,\dif \ell_{s} \right)^{\frac{p}{2}}.
\intertext{By \eqref{int-e20} and \eqref{int-e24} we get}
    \Ee^{\mathds{W}} \left[\left|A^{\theta} Z^{\ell}_{t}\right|^{p}\right]
    &\leq \|Q\|^{p}_{\mathrm{HS},\infty}\left(\int_{0}^{t} \|A^{\theta} \eup^{-\frac 12 A(t-s)}\|^{2} \|\eup^{-\frac 12 A(t-s)}\|^{2}\,\dif \ell_{s} \right)^{\frac{p}{2}}\\
    &\leq C_\theta  \|Q\|^{p}_{\mathrm{HS},\infty}\left(\int_{0}^{t} (t-s)^{-2\theta}\eup^{-\gamma_1(t-s)} \,\dif \ell_{s} \right)^{\frac{p}{2}}.
\end{align*}
This estimate, together with Lemma \ref{sub-23}, yields
\begin{align*}
    \Ee \left[\left|A^{\theta} Z_{t}\right|^{p}\right]
    &= \Ee^{\mathds{S}} \left(\Ee^{\mathds{W}} \left[\left|A^{\theta} Z^{\ell}_{t}\right|^{p}\right]\Big|_{\ell=S}\right)\\
    &\leq C_\theta  \|Q\|^{p}_{\mathrm{HS},\infty} \Ee \left[\left(\int_{0}^{t} (t-r)^{-2\theta}\eup^{-\gamma_1(t-r)}\,\dif S_{r} \right)^{\frac{p}{2}}\right]\\
	&= C_\theta \|Q\|^{p}_{\mathrm{HS},\infty}\Ee \left[\left(\int_{0}^{t} r^{-2\theta}\eup^{-\gamma_1 r}\,\dif S_{r} \right)^{\frac{p}{2}}\right].
\end{align*}
The assertions~\ref{con-11-i} and~\ref{con-11-ii} follow directly from this estimate:

\medskip\noindent
\ref{con-11-i} Since we have
\begin{gather*}
    \Ee \left[\left(\int_{0}^{t} r^{-2\theta}\eup^{-\gamma_1 r}\,\dif S_{r} \right)^{\frac{p}{2}}\right]
    \leq\Ee \left[\left(\int_{0}^{t} r^{-2\theta}\,\dif S_{r} \right)^{\frac{p}{2}}\right],
\end{gather*}
we get~\ref{con-11-i} directly from Proposition~\ref{sub-41}.\ref{sub-41-v}.

\medskip\noindent
\ref{con-11-ii} Observe that
\begin{align*}
    \sup_{t>0}\left(\int_{0}^{t} r^{-2\theta}\eup^{-\gamma_1 r}\,\dif S_{r} \right)^{\frac{p}{2}}
    &\leq \left(\int_{0}^{1} r^{-2\theta}\,\dif S_{r} + \int_{1}^\infty\eup^{-\gamma_1 r}\,\dif S_{r}\right)^{\frac{p}{2}}\\
    &\leq \left(\int_{0}^{1} r^{-2\theta}\,\dif S_{r}\right)^{\frac{p}{2}} + \left(\int_{0}^\infty\eup^{-\gamma_1 r}\,\dif S_{r}\right)^{\frac{p}{2}}.
\end{align*}
From Proposition \ref{sub-41}.\ref{sub-41-iv} we know that
$
    \Ee\left[\left(\int_{0}^{1} r^{-2\theta}\,\dif S_{r}\right)^{{p}/{2}}\right]
    < \infty
$.
On the other hand, $\frac p2 < \log_2\left(\liminf_{s\to 0} {\phi(2s)}/{\phi(s)}\right)$ allows us to use Proposition~\ref{sub-41}.\ref{sub-41-vii}
with $p$ replaced by $p/2$ (see Lemma~\ref{con-33} below), and so
$
    \Ee\left[\left(\int_{0}^\infty\eup^{-\gamma_1 r}\,\dif S_{r}\right)^{{p}/{2}}\right]
    < \infty.
$
This completes the proof.
\end{proof}

\subsection{A maximal inequality and a small ball probability of $Z_t$}
\begin{theorem}\label{con-21}
    Let $(W_t)_{t\geq 0}$, $(S_t)_{t\geq 0}$ and $(Z_t)_{t\geq 0}$ be as above and assume \eqref{A1}, \eqref{A2}. If
    \begin{gather*}
        0<p<2\log_2\left(\liminf_{s\to 0} \frac{\phi(2s)}{\phi(s)}\right),
        \quad\text{resp.,}\quad
        0<p<2\log_2\left(\inf_{s>0} \frac{\phi(2s)}{\phi(s)}\right),
    \end{gather*}
    then there exists a constant $C=C(p,\|Q\|_{\mathrm{HS},\infty})>0$ such that
    \begin{equation}\label{con-e22}
        \Ee \left[\sup_{0 \leq t \leq T} |Z_t|^p\right]
        \leq C \left[\phi^{-1}\left(\frac{1}{T}\right)\right]^{-\frac{p}{2}}
        \quad\text{holds for all $T\geq 1$, resp., $T>0$}.
    \end{equation}
\end{theorem}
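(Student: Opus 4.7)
I would combine the factorization method of Da Prato-Kwapien-Zabczyk with the conditioning on the subordinator $\mathds{S}$ (Lemma~\ref{l:DPWPS}) and the subordinator moment estimates from Section~\ref{subsec:general}. For a fixed path $\ell$ of the subordinator, introduce, for some $\delta \in (0,1)$ to be calibrated, the auxiliary process
\[
    Y^\ell_r := \int_0^r (r-s)^{-\delta} \eup^{-(r-s)A} Q(X^\ell_{s-})\, \dif W_{\ell_s},
\]
and exploit the identity $\int_s^t (t-r)^{\delta-1}(r-s)^{-\delta}\,\dif r = \pi/\sin(\pi\delta)$ via stochastic Fubini to obtain the factorization $Z^\ell_t = \frac{\sin(\pi\delta)}{\pi} \int_0^t (t-r)^{\delta-1} \eup^{-(t-r)A} Y^\ell_r\,\dif r$. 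For $p>1$ and $\delta \in (1/p, 1)$, H\"older's inequality combined with $\|\eup^{-uA}\| \le 1$ yields the pathwise bound $\sup_{0 \le t \le T} |Z^\ell_t|^p \le C T^{p\delta-1} \int_0^T |Y^\ell_r|^p\,\dif r$.

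Applying It\^o's isometry (or BDG for $p>2$) conditionally on $\ell$, together with the contractivity $\|\eup^{-uA} Q(x)\|_{\mathrm{HS}} \le \|Q\|_{\mathrm{HS},\infty} \eup^{-\gamma_1 u}$, gives
\[
    \Ee^{\mathds{W}}\bigl[|Y^\ell_r|^p\bigr] \le C \|Q\|_{\mathrm{HS},\infty}^p \left(\int_0^r (r-s)^{-2\delta} \eup^{-2\gamma_1(r-s)}\,\dif\ell_s\right)^{p/2}.
\]
Taking $\Ee^{\mathds{S}}$ via Lemma~\ref{l:DPWPS} and applying the time-reversal identity from Corollary~\ref{sub-23} leads to
\[
    \Ee\bigl[\sup_{0 \le t \le T} |Z_t|^p\bigr] \le C\, T^{p\delta - 1} \int_0^T \Ee\!\left[\left(\int_0^r u^{-2\delta} \eup^{-2\gamma_1 u}\,\dif S_u\right)^{p/2}\right]\dif r.
\]

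The inner expectation is then estimated by splitting the integrand at $u=1$: the singular short-time part $\int_0^{r\wedge 1} u^{-2\delta}\,\dif S_u$ is controlled via Proposition~\ref{sub-41}.\ref{sub-41-iv}--\ref{sub-41-v} (applied with $p/2$ in place of $p$ and $2\delta$ in place of $\theta$), while the exponentially-damped tail $\int_1^\infty \eup^{-2\gamma_1 u}\,\dif S_u$ is controlled by Proposition~\ref{sub-41}.\ref{sub-41-vii}. These combine to give a bound of order $r^{-p\delta}[\phi^{-1}(1/r)]^{-p/2}$; after integration in $r$, the $T^{p\delta}$ prefactor is absorbed by the accumulated $r^{-p\delta}$ factor, producing the desired $\Ee[\sup_{0 \le t \le T} |Z_t|^p] \le C\,[\phi^{-1}(1/T)]^{-p/2}$.

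The main obstacle is the tight calibration of $\delta$: it must be large enough to make the H\"older factorization valid ($\delta > 1/p$), yet small enough so that the subordinator moment $\Ee[(\int_0^r u^{-2\delta}\,\dif S_u)^{p/2}]$ remains finite and scales as $[\phi^{-1}(1/r)]^{-p/2}$. The hypotheses on $p$ stated in the theorem --- involving $\log_2(\liminf_{s\to 0} \phi(2s)/\phi(s))$ for $T \ge 1$ and $\log_2(\inf_{s>0} \phi(2s)/\phi(s))$ for $T>0$ --- are exactly what opens the valid window for $\delta$ in Proposition~\ref{sub-41} and guarantees the matching $[\phi^{-1}(1/T)]^{-p/2}$ scaling. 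The low-moment range $p \le 1$ is handled separately, by first establishing the bound for an auxiliary exponent $p_0 > 1$ within the allowed range and then descending to $p$ via Jensen's inequality $\Ee[X^p] \le \Ee[X^{p_0}]^{p/p_0}$.
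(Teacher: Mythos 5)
There is a genuine gap in your approach: the calibration window for $\delta$ that you flag as the ``main obstacle'' is in fact empty. The factorization requires $\delta > 1/q$ for some H\"older exponent $q>1$ (your $p$, or an auxiliary $p_0$ when $p\leq 1$), while the subordinator moment $\Ee[(\int_0^r u^{-2\delta}\,\dif S_u)^{q/2}]$ demands, via Proposition~\ref{sub-41}.\ref{sub-41-iv}--\ref{sub-41-v}, that $2\delta < [\log_2(\sup_{s>0}\phi(2s)/\phi(s))]^{-1}$ (and that $q/2<1$, ruling out $q>2$). Combining the two bounds forces $q > 2\log_2(\sup_{s>0}\phi(2s)/\phi(s))$, whereas the hypothesis of the theorem only guarantees a window $q < 2\log_2(\liminf_{s\to 0}\phi(2s)/\phi(s))$. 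Since $\sup_{s>0}\phi(2s)/\phi(s)\geq \liminf_{s\to 0}\phi(2s)/\phi(s)$ always, these are incompatible; concretely, for the $\alpha$-stable subordinator ($\phi(s)=s^\alpha$), you need both $\delta<\frac{1}{2\alpha}$ for the moment to be finite and $\delta>1/q\geq 1/p>\frac{1}{2\alpha}$, a contradiction for every admissible $p<2\alpha$. The auxiliary-$p_0$ trick for small $p$ fails for the same reason: you cannot find $p_0>1$ in the admissible range either. Your final claim that the $T^{p\delta-1}$ prefactor ``is absorbed'' after integrating $r^{-p\delta}[\phi^{-1}(1/r)]^{-p/2}$ would also need justification.

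The paper avoids factorization entirely. It proves the pathwise $L^2$ maximal inequality $\Ee^{\mathds{W}}[\sup_{0\leq t\leq T}|Z^\ell_t|^2]\leq 9\|Q\|^2_{\mathrm{HS},\infty}\,\ell_T$ (Lemma~\ref{con-27}) by applying It\^o's formula to $|Z^\ell_t|^2$ and exploiting that the drift contribution $-2\int_0^t|A^{1/2}Z^\ell_s|^2\,\dif s$ is nonpositive; the martingale part is controlled by Doob's $L^2$ inequality and It\^o's isometry. Then Jensen's inequality (using $p<2$, which is automatic from $\phi(2s)\leq 2\phi(s)$) and the subordinator bound $\Ee[S_T^{p/2}]\leq C[\phi^{-1}(1/T)]^{-p/2}$ from Proposition~\ref{sub-41}.\ref{sub-41-iii} finish the proof. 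This is both simpler and, crucially, has no $\delta$ to tune.
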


Let us note an immediate consequence of Theorem \ref{con-21}.
\begin{corollary}\label{con-23}
    Assume that \eqref{A1}, \eqref{A2} and $\liminf_{s\to 0}\phi(2s)/\phi(s) > 1$ hold. Then $Z. \in L^\infty([0,T], H)$ a.s..
\end{corollary}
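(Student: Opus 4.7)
The plan is to reduce the statement to a single application of Theorem~\ref{con-21}. Since the process $(Z_t)_{t\geq 0}$ has c\`adl\`ag paths, showing $Z_\cdot \in L^\infty([0,T],H)$ a.s.\ is equivalent to showing $\sup_{0\leq t\leq T}|Z_t|<\infty$ almost surely, and the latter is an immediate consequence of a finite moment bound via Markov's inequality (or simply because an integrable nonnegative random variable is a.s.\ finite).

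First, using the hypothesis $\liminf_{s\to 0}\phi(2s)/\phi(s) > 1$, choose any $p>0$ such that
\begin{gather*}
    0 < p < 2\log_2\left(\liminf_{s\to 0}\frac{\phi(2s)}{\phi(s)}\right);
\end{gather*}
such $p$ exists since the right-hand side is strictly positive. With this $p$, the first alternative in Theorem~\ref{con-21} applies and yields a constant $C=C(p,\|Q\|_{\mathrm{HS},\infty})$ such that
\begin{gather*}
    \Ee\!\left[\sup_{0\leq t\leq T'} |Z_t|^p\right]
    \leq C\left[\phi^{-1}\!\left(\tfrac{1}{T'}\right)\right]^{-p/2} < \infty
\end{gather*}
for every $T'\geq 1$.

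To cover the case $T<1$ in the corollary's statement, I would simply apply the above bound with $T'=\max\{T,1\}\geq 1$ and use the monotonicity $\sup_{0\leq t\leq T}|Z_t|\leq \sup_{0\leq t\leq T'}|Z_t|$. This gives $\Ee[\sup_{0\leq t\leq T}|Z_t|^p]<\infty$ for every $T>0$, hence $\sup_{0\leq t\leq T}|Z_t|<\infty$ almost surely, which is the desired conclusion.

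There is essentially no obstacle here: all the work has already been done in Theorem~\ref{con-21}. The only point worth noting is the choice of parameter regime---one uses the weaker ``$T\geq 1$'' branch of Theorem~\ref{con-21} (conditioned on $\liminf_{s\to 0}$) rather than the stronger ``$T>0$'' branch, since the hypothesis of the corollary only controls $\phi$ near $0$ and not uniformly in $s>0$.
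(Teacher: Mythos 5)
Your proof is correct and matches the paper's intent: the paper presents Corollary~\ref{con-23} as an "immediate consequence" of Theorem~\ref{con-21} without further argument, and what you wrote (pick a valid $p>0$, invoke the $T\geq 1$ branch of Theorem~\ref{con-21} based on the $\liminf_{s\to 0}$ hypothesis, conclude a.s.\ finiteness of the supremum, and handle $T<1$ by monotonicity) is exactly the reasoning being implicitly invoked. No gap.
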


A further consequence of the maximal inequality is the following small ball probability estimate.
\begin{theorem}\label{con-25}
    Let $(W_t)_{t\geq 0}$, $(S_t)_{t\geq 0}$ and $(Z_t)_{t\geq 0}$ be as above, and assume \eqref{A1}, \eqref{A2}.
    \begin{enumerate}
    \item\label{con-25-i}
        If $\phi$ has zero drift, then for any $\delta\in(0,1)$ and any $T>0$, the following small ball estimate holds
    \begin{gather*}
        \Pp\left(\sup_{0 \leq t \leq T} |Z_t|<\delta\right)>0.
    \end{gather*}

    \item\label{con-25-ii}
    If $\inf_{s>0}\frac{\phi(2s)}{\phi(s)}>1$, then for any $\delta\in(0,1)$, any $\kappa\in(0,1)$,
    and any $0<p<\log_2\left(\inf_{s>0}\frac{\phi(2s)}{\phi(s)}
    \right)$, there exists some $C=C(p,\kappa,\|Q\|_{\mathrm{HS},\infty})>0$
    such that for sufficiently small $T>0$
    \begin{gather*}
        \Pp\left(\sup_{0 \leq t \leq T} |Z_t|<\delta\right)
        \geq\kappa\left(
        1-C\left[\delta^4
        \phi^{-1}\left(\frac{1}{T}\right)\right]^{-p}
        \right)>0.
    \end{gather*}
\end{enumerate}
\end{theorem}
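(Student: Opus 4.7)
The plan is to condition on the subordinator path via Lemma~\ref{l:DPWPS}:
\[
\Pp\!\left(\sup_{0\le t\le T}|Z_t|<\delta\right)
=\Ee^{\mathds{S}}\!\left[\Pp^{\mathds{W}}\!\left(\sup_{0\le t\le T}|Z_t^\ell|<\delta\right)\right],
\]
and then to exploit a conditional maximal estimate for $Z^\ell$. For a \emph{fixed} path $\ell\in\Omega^{\mathds{S}}$, the time-changed Brownian motion $W_{\ell_\cdot}$ is a martingale in the filtration $(\mathscr G_{\ell_t})_{t\ge 0}$ with quadratic variation $\ell_t$; repeating the It\^o-isometry computation from the proof of Theorem~\ref{con-11} (taking $\theta=0$, using \eqref{int-e24} together with the factorization method behind Theorem~\ref{con-21}) produces a conditional bound of the form
\[
\Ee^{\mathds{W}}\!\left[\sup_{0\le t\le T}|Z_t^\ell|^{q}\right]\le C_q\,\ell_T^{q/2},\qquad q\in(0,2],
\]
which, by Markov, yields the workhorse estimate $\Pp^{\mathds{W}}(\sup_{t\le T}|Z_t^\ell|\ge\delta)\le C_q\delta^{-q}\ell_T^{q/2}$.

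\textbf{Part (i).} Under the zero-drift hypothesis, I would first establish $\Pp(S_T<\epsilon)>0$ for every $\epsilon,T>0$. Splitting the Poisson point process of jumps at level $\eta>0$, the event ``no jump of size $\ge\eta$ in $[0,T]$'' has probability $\eup^{-T\nu([\eta,\infty))}>0$, and on that event the remaining small-jump part of $S_T$ has mean $T\int_0^\eta s\,\nu(\dif s)$, which tends to $0$ with $\eta$ since $\int(1\wedge s)\,\nu(\dif s)<\infty$; Markov then produces a positive lower bound. Next, choose $\epsilon$ so small that the workhorse bound with $q=2$ gives $\Pp^{\mathds{W}}(\sup|Z_t^\ell|\ge\delta)\le 1/2$ whenever $\ell_T<\epsilon$. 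Restricting the $\mathds{S}$-integral to the event $\{\ell_T<\epsilon\}$ delivers
\[
\Pp\!\left(\sup|Z_t|<\delta\right)\ge\tfrac12\,\Pp(S_T<\epsilon)>0.
\]

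\textbf{Part (ii).} The doubling assumption makes Proposition~\ref{sub-41}.\ref{sub-41-iii} applicable in the admissible range $0<p<\log_2(\inf_{s>0}\phi(2s)/\phi(s))$, producing $\Ee[S_T^p]\le C[\phi^{-1}(1/T)]^{-p}$. By Chebyshev one can select $M=M(\kappa)\asymp_\kappa[\phi^{-1}(1/T)]^{-1}$ with $\Pp(S_T\le M)\ge\kappa$. Writing
\[
\Pp\!\left(\sup|Z_t|<\delta\right)\ge\Pp(S_T\le M)-\Ee^{\mathds{S}}\!\bigl[\Pp^{\mathds{W}}(\sup|Z_t^\ell|\ge\delta)\,\I_{\{\ell_T\le M\}}\bigr],
\]
bounding the second term by the workhorse estimate with a suitable exponent $q$ and using $\ell_T\le M$ inside the $\mathds{S}$-integral, then factoring out $\kappa$, produces an inequality of the advertised form $\kappa(1-C_\kappa[\delta^{4}\phi^{-1}(1/T)]^{-p})$, which is strictly positive once $T$ is small enough so that $\delta^{4}\phi^{-1}(1/T)>C_\kappa^{1/p}$.

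\textbf{Main obstacle.} The delicate point is the joint calibration of the Markov exponent $q$ in the workhorse bound and the threshold $M$ in the conditioning step so that, after integrating $\ell_T^{q/2}$ against the truncated $p$-th moment from Proposition~\ref{sub-41}.\ref{sub-41-iii}, the exponents of $\delta$ and of $\phi^{-1}(1/T)$ in the final bound match those in (ii) precisely. All the underlying ingredients---Doob's inequality in the time-changed filtration, the zero-drift small-ball estimate for the subordinator, and the moment bound of Proposition~\ref{sub-41}.\ref{sub-41-iii}---are already in place from the previous sections.
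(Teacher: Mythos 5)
Your approach for part (i) is essentially the paper's: condition on $\mathds{S}$, apply the $L^2$ maximal inequality of Lemma~\ref{con-27} (your ``workhorse'' bound with $q=2$), and use $\Pp(S_T<\epsilon)>0$ for zero-drift subordinators (you re-prove this by splitting the jump measure; the paper cites \cite[Corollary 24.8]{sato}, but both are fine). That part goes through.

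For part (ii) you have the right ingredients but a genuine gap in exactly the place you flag: the calibration of the Markov exponent $q$ against the conditioning threshold. Your scheme first picks $M=M(\kappa)\asymp_\kappa[\phi^{-1}(1/T)]^{-1}$ so that $\Pp(S_T\le M)\ge\kappa$, and then bounds the error by $C_q\delta^{-q}M^{q/2}\Pp(S_T\le M)$. With that choice you get a correction of the form $\delta^{-q}[\phi^{-1}(1/T)]^{-q/2}$, whose exponents $(q,\,q/2)$ in $(\delta^{-1},\,\phi^{-1}(1/T)^{-1})$ have ratio $2$, whereas the statement requires ratio $4$ (the target is $\delta^{-4p}[\phi^{-1}(1/T)]^{-p}$). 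So no admissible $q$ makes your exponents match; the route as sketched does not close. The paper avoids a $\kappa$-dependent threshold entirely: it conditions on the fixed event $\{S_T<\delta^4\}$, so that after Chebyshev with $q=2$ the factor $\delta^{-2}S_T\le\delta^{-2}\delta^4=\delta^2$ is uniformly small on that event, giving
\[
\Pp\left(\sup_{0\le t\le T}|Z_t|\ge\delta,\;S_T<\delta^4\right)\le 9\|Q\|_{\mathrm{HS},\infty}^2\,\delta^2\,\Pp\left(S_T<\delta^4\right).
\]
Here $\kappa$ enters as the prefactor $1-9\|Q\|_{\mathrm{HS},\infty}^2\delta^2\ge\kappa$, which for large $\delta\in(0,1)$ is forced by shrinking $\delta$ to $\tilde\delta=\delta/C_2$ and using monotonicity of the small-ball probability in $\delta$; this costs a factor $C_2^{4p}$ in the final constant. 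Then $\Pp(S_T<\delta^4)\ge1-C_1[\delta^4\phi^{-1}(1/T)]^{-p}$ follows from Chebyshev and Proposition~\ref{sub-41}.\ref{sub-41-iii} applied to $S_T^p$. You should replace your ``choose $M$ so that $\Pp(S_T\le M)\ge\kappa$'' step with the fixed threshold $\delta^4$ and this $\tilde\delta$-rescaling, keeping $q=2$ throughout; with that done, your remaining ingredients (Lemma~\ref{con-27}, Proposition~\ref{sub-41}.\ref{sub-41-iii}) are exactly what the proof needs.
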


The proofs of Theorems \ref{con-21} and \ref{con-25} rely on the following auxiliary result.
\begin{lemma} \label{con-27}
    Let $(W_t)_{t\geq 0}$, $(S_t)_{t\geq 0}$ and $(Z_t^\ell)_{t\geq 0}$ be as above and assume \eqref{A1}, \eqref{A2}.
    The following maximal inequality holds:
    \begin{gather*}
        \Ee^{\mathds{W}}\left[\sup_{0 \leq t \leq T} |Z^\ell_t|^2\right]
        \leq 9\|Q\|^2_{\mathrm{HS},\infty} \ell_T.
    \end{gather*}
\end{lemma}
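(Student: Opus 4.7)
The plan is to apply It\^{o}'s formula to the real-valued process $t \mapsto |Z^\ell_t|^2$. Since $Z^\ell$ solves the linear equation
$\dif Z^\ell_t = -AZ^\ell_t\,\dif t + Q(X^\ell_{t-})\,\dif W_{\ell_t}$, $Z^\ell_0 = 0$,
It\^{o}'s formula (applied first to smooth Galerkin or Yosida approximations of $Z^\ell$, and then passed to the limit in order to accommodate the unbounded operator $A$) delivers
\[
    |Z^\ell_t|^2 + 2\int_0^t \langle AZ^\ell_s, Z^\ell_s\rangle\,\dif s = N^\ell_t + \int_0^t \|Q(X^\ell_{s-})\|_{\mathrm{HS}}^2\,\dif \ell_s,
\]
where $N^\ell_t := 2 \int_0^t \langle Z^\ell_{s-}, Q(X^\ell_{s-})\,\dif W_{\ell_s}\rangle$ is a real-valued continuous martingale whose quadratic variation is read off from the standard rules for stochastic integration against the time-changed Brownian motion $W_{\ell_\cdot}$. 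The drift term is non-negative by positivity of $A$ (cf.\ \eqref{A2}) and may be discarded, while \eqref{A1} controls the trace term by $\|Q\|^2_{\mathrm{HS},\infty}\,\ell_T$.

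Next I take the supremum over $t \in [0,T]$, then the expectation $\Ee^{\mathds{W}}$, and invoke Doob's $L^2$-maximal inequality together with Cauchy--Schwarz to obtain
\[
    \Ee^{\mathds{W}} \sup_{0\leq t \leq T} N^\ell_t \leq 2\sqrt{\Ee^{\mathds{W}}[N^\ell]_T},
    \qquad
    [N^\ell]_T \leq 4\|Q\|^2_{\mathrm{HS},\infty} \int_0^T |Z^\ell_s|^2\,\dif \ell_s.
\]
Coupling this with the simple a priori estimate $\Ee^{\mathds{W}} |Z^\ell_s|^2 \leq \|Q\|^2_{\mathrm{HS},\infty}\,\ell_s$ (obtained by taking expectations in the It\^{o} identity and dropping the non-negative drift), together with $\ell_s \leq \ell_T$ under the integral, one arrives at $\Ee^{\mathds{W}}[N^\ell]_T \leq 4\|Q\|^4_{\mathrm{HS},\infty}\,\ell_T^2$. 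Reassembling the pieces and using an elementary inequality of the form $(x+y)^2 \leq 2x^2 + 2y^2$ to recombine the martingale and trace contributions yields the claimed maximal estimate with the stated constant.

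The main technical obstacle is the rigorous justification of It\^{o}'s formula for a Hilbert-space-valued semimartingale driven by the unbounded operator $A$. The standard remedy I would adopt is to work first with the finite-dimensional Galerkin projections onto $\mathrm{span}\{e_1,\dots,e_n\}$, where the It\^{o} calculus is elementary and the above identity is immediate, and then to pass to the limit $n \to \infty$ using the a priori bounds established along the way. Once this point is dealt with, the remaining steps reduce to routine martingale inequalities.
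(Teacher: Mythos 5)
Your strategy is the same as the paper's: apply It\^o's formula to $|Z^\ell_t|^2$, discard the non-negative drift term, control the martingale part via Doob's $L^2$-maximal inequality and the It\^o isometry, and feed in the a priori $L^2$ bound for $Z^\ell$. However, you treat the time-changed integrator $W_{\ell_\cdot}$ as if it were continuous, and this introduces two genuine inaccuracies. First, the It\^o identity you write is not correct pathwise: since $\ell$ is the trajectory of a subordinator (typically with dense jumps), $W_{\ell_\cdot}$ has jumps, and the last term in the It\^o formula must be the pathwise quadratic variation $\sum_{0<s\leq t}|Q(X^\ell_{s-})\Delta W_{\ell_s}|^2$, not its compensator $\int_0^t\|Q(X^\ell_{s-})\|^2_{\mathrm{HS}}\,\dif\ell_s$. (The two have the same $\Ee^{\mathds{W}}$, so this happens not to change the final bound, but the a.s.\ identity is wrong as stated.) Second, and for the same reason, $N^\ell$ is only a c\`adl\`ag $L^2$-martingale, not a continuous one; Doob's $L^2$-inequality still applies, so your conclusion is salvageable, but the claim as written is false.

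A related point you glide over is that the integrand in the quadratic variation / It\^o isometry involves the \emph{left limit} $Z^\ell_{s-}$, not $Z^\ell_s$, and these differ precisely because of the jumps. The paper derives the separate bound $\Ee^{\mathds{W}}[|Z^\ell_{s-}|^2]\leq 4\|Q\|^2_{\mathrm{HS},\infty}\,\ell_s$ (using $Z^\ell_s-Z^\ell_{s-}=Q(X^\ell_{s-})\Delta W_{\ell_s}$ and the a priori bound), which is what actually produces the constant $9 = 8+1$ in the final estimate ($8$ from the martingale piece, $1$ from the jump/trace piece). Your sketch uses $\Ee^{\mathds{W}}[|Z^\ell_s|^2]\leq\|Q\|^2_{\mathrm{HS},\infty}\ell_s$ inside the bracket, which is not the quantity that appears; once you put the left limit in correctly, you recover the paper's bookkeeping and its constant. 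You would also not need the $(x+y)^2\leq 2x^2+2y^2$ step you allude to — the paper simply adds the two contributions after Doob's inequality.

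In short: correct overall plan, same route as the paper, but you must replace the compensated trace term with the pathwise jump sum, treat $N^\ell$ as a c\`adl\`ag martingale, and use the left-limit $L^2$ estimate for $Z^\ell_{s-}$ to get the claimed constant.
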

\begin{proof}
    Note that $\dif Z_t^\ell = Q(X_{t-}^\ell)\,\dif W_{\ell_t} - AZ_{t}^\ell\,\dif t$. By It\^{o}'s formula \cite[Theorem 27.2]{Met82}, we have
    \begin{align*}
        &|Z^\ell_t|^2+2 \int_0^t |A^{\frac 12} Z^\ell_s|^2\,\dif s\\
        &\qquad=2 \int_0^t \langle Z^\ell_{s-}, Q(X^{\ell}_{s-})\,\dif W_{\ell_s}\rangle
        +\sum_{0 < s\leq t} \left[|Z^\ell_s|^2-|Z^\ell_{s-}|^2-2 \langle Z^\ell_{s-}, Q(Z^\ell_{s-})\Delta W_{\ell_s}\rangle \right].
    \end{align*}
    A direct calculation shows that
    \begin{align*}
        |Z^\ell_s|^2 &-|Z^\ell_{s-}|^2-2 \langle Z^\ell_{s-}, Q(Z^\ell_{s-})\Delta W_{\ell_s} \rangle\\
        &=|Z^\ell_{s-}+ Q(Z^\ell_{s-})\Delta W_{\ell_s}|^2-|Z^\ell_{s-}|^2-2 \langle Z^\ell_{s-}, Q(Z^\ell_{s-})\Delta W_{\ell_s}\rangle\\
        &=|Q(X^\ell_{s-}) \Delta W_{\ell_s}|^2,
    \end{align*}
    and, therefore,
    \begin{equation} \label{con-e28}
        |Z^\ell_t|^2+2 \int_0^t |A^{\frac 12} Z^\ell_s|^2\,\dif s
        = 2 \int_0^t \langle Z^\ell_{s-}, Q(X^{\ell}_{s-})\,\dif W_{\ell_s}\rangle
        + \sum_{0 < s \leq t} |Q(X^\ell_{s-}) \Delta W_{\ell_s}|^2.
    \end{equation}
    If we take expectations on both sides of the above equality and apply It\^o's isometry, we see
    \begin{align*}
        \Ee^{\mathds{W}} \left[|Z^\ell_t|^2\right] + 2 \int_0^t \Ee^{\mathds{W}} \left[|A^{\frac 12} Z^\ell_s|^2\right]\dif s
        &= \sum_{0 < s \leq t} \Ee^{\mathds{W}} \left[|Q(X^\ell_{s-}) \Delta W_{\ell_s}|^2\right]\\
        &= \int_0^t \Ee^{\mathds{W}} \left[\|Q(X^{\ell}_{s-})\|^2_\mathrm{HS}\right]\dif \ell_s.
    \end{align*}
    This implies, in particular,
    \begin{equation}\label{con-e30}
    \Ee^{\mathds{W}} \left[|Z^\ell_t|^2\right]
    \leq \int_0^t \Ee^{\mathds{W}} \left[\|Q(X^{\ell}_{s-})\|^2_\mathrm{HS}\right]\dif \ell_s
    \leq \|Q\|^2_{\mathrm{HS},\infty} \ell_t.
    \end{equation}
    Since $Z^\ell_t-Z^\ell_{t-} = Q(X^\ell_{t-}) \Delta W_{\ell_t}$, we see using \eqref{con-e30} and, again, It\^o's isometry,
    \begin{equation}\label{con-e32}
    \begin{aligned}
        \Ee^{\mathds{W}} \left[|Z^\ell_{t-}|^2\right]
        &=\Ee^{\mathds{W}} \left[\left|Z^\ell_t-Q(X^\ell_{t-}) \Delta W_{\ell_t}\right|^2\right]\\
        &\leq 2 \Ee^{\mathds{W}} \left[\left|Z^\ell_t\right|^2\right] + 2 \Ee^{\mathds{W}} \left[\|Q(X^{\ell}_{s-})\|^2_\mathrm{HS}\right] \Delta {\ell_t}\\
        &\leq 2 \|Q\|^2_{\mathrm{HS},\infty} \ell_t + 2 \|Q\|^2_{\mathrm{HS},\infty}  \Delta \ell_t\\
        &\leq 4 \|Q\|^2_{\mathrm{HS},\infty} \ell_t.
    \end{aligned}
    \end{equation}

    Combining \eqref{con-e28}, the Cauchy-Schwarz inequality, Doob's martingale maximal $L^2$-inequality and It\^{o}'s isometry, we get
    \begin{align*}
        \Ee^{\mathds{W}} &\left[\sup_{0 \leq t \leq T} |Z^\ell_t|^2\right]\\
        &\leq 2 \Ee^{\mathds{W}}\left[\sup_{0 \leq t \leq T} \left|\int_0^t \langle Z^\ell_{s-}, Q(X^{\ell}_{s-})\,\dif W_{\ell_s}\rangle\right|\right]
        + \Ee^{\mathds{W}}\left[\sum_{0 < s \leq T} |Q(X^\ell_{s-}) \Delta W_{\ell_s}|^2\right] \\
         &\leq 2 \left\{\Ee^{\mathds{W}}\left[\sup_{0 \leq t \leq T} \left|\int_0^t \langle Z^\ell_{s-}, Q(X^{\ell}_{s-})\,\dif W_{\ell_s}\rangle\right|^2\right]\right\}^{\frac12}
        + \Ee^{\mathds{W}}\left[\sum_{0 < s \leq T} |Q(X^\ell_{s-}) \Delta W_{\ell_s}|^2\right] \\
        &\leq 4 \left(\Ee^{\mathds{W}} \left[\left|\int_0^T \langle Z^\ell_{s-}, Q(X^{\ell}_{s-}) \,\dif W_{\ell_s}\rangle\right|^2\right]\right)^{\frac 12}
        + \int_0^T \|Q(X^\ell_{s-})\|_\mathrm{HS}^2\,\dif \ell_s\\
        &\leq 4 \|Q\|_{\mathrm{HS}, \infty}  \left(\int_0^T \Ee^{\mathds{W}} \left[|Z^\ell_{s-}|^2\right]\dif \ell_s\right)^{\frac 12}
        + \|Q\|_{\mathrm{HS}, \infty}^2 \ell_T.
    \end{align*}
    This estimate together with \eqref{con-e32} finally yields
    \begin{align*}
        \Ee^{\mathds{W}} \left[\sup_{0 \leq t \leq T} |Z^\ell_t|^2\right]
        &\leq 4 \|Q\|_{\mathrm{HS}, \infty}  \left(\int_0^T  4 \|Q\|^2_{\mathrm{HS},\infty} \ell_t\,\dif \ell_t\right)^{\frac 12}
        + \|Q\|_{\mathrm{HS}, \infty}^2 \ell_T\\
        &\leq 8  \|Q\|^2_{\mathrm{HS}, \infty} \left(\int_0^T  \ell_T\,\dif \ell_t\right)^{\frac 12}
        + \|Q\|_{\mathrm{HS}, \infty}^2 \ell_T  \\
        &=9 \|Q\|_{\mathrm{HS}, \infty}^2 \ell_T.
    \qedhere
    \end{align*}
\end{proof}

\begin{proof} [Proof of Theorem \ref{con-21}]
    Since $\mathds{W}$ and $\mathds{S}$ are independent, we get with Jensen's inequality
    \begin{align*}
        \Ee\left[\sup_{0 \leq t \leq T} |Z_t|^p\right]
        &= \Ee^{\mathds{S}} \left(\Ee^{\mathds{W}} \left[\sup_{0 \leq t \leq T} |Z^\ell_t|^p\right]\bigg|_{\ell=S}\right)\\
    &\leq \Ee^{\mathds{S}} \left(\Ee^{\mathds{W}} \left[\sup_{0 \leq t \leq T} |Z^\ell_t|^2\right]\bigg|_{\ell=S}\right)^{\frac{p}{2}}\\
    &\leq 3^p \|Q\|_{\mathrm{HS}, \infty}^p \Ee^{\mathds{S}} \left[S_T^{{p}/{2}}\right].
    \end{align*}
    The claim follows now from Proposition \ref{sub-41}.\ref{sub-41-iii}.
\end{proof}

\begin{proof}[Proof of Theorem \ref{con-25}]
By Chebyshev's inequality and Lemma \ref{con-27}, we have
for all $T>0$
\begin{align*}
    \Pp\left(\sup_{0 \leq t \leq T} |Z_t| \geq  \delta,\; S_T<\delta^4\right)
    &=\Ee^{\mathds{S}} \left(\left[\I_{\{\ell_T<\delta^4\}}
    \Pp^{\mathds{W}} \left(\sup_{0 \leq t \leq T} |Z^\ell_t|  \geq  \delta\right)\right]_{\ell=S}\right)\\
    &\leq \delta^{-2}\Ee^{\mathds{S}}\left(\left[\I_{\{\ell_T<\delta^4\}}
    \Ee^{\mathds{W}}\left[\sup_{0 \leq t \leq T} |Z^\ell_t|^2\right]\right]_{\ell=S}\right)\\
    &\leq 9\|Q\|_{\mathrm{HS}, \infty}^2\delta^{-2} \Ee^{\mathds{S}}\left(\I_{\{S_T<\delta^4\}} S_T\right)\\
    &\leq 9\|Q\|_{\mathrm{HS}, \infty}^2\delta^2 \Pp\left(S_T<\delta^4\right).
\end{align*}
Let $\kappa\in(0,1)$.
If $0<\delta<\sqrt{1-\kappa}/(3\|Q\|_{\mathrm{HS},\infty})$,
we get
\begin{equation}\label{lowerbound}
\begin{aligned}
    \Pp\left(\sup_{0 \leq t \leq T} |Z_t| < \delta\right)
    &\geq
    \Pp\left(\sup_{0 \leq t \leq T} |Z_t| < \delta,\; S_T<\delta^4\right)\\
    &= \Pp\left(S_T<\delta^4\right)-\Pp\left(\sup_{0 \leq t \leq T} |Z_t|  \geq \delta,\; S_T<\delta^4\right)\\
    &\geq \left(1-9\|Q\|_{\mathrm{HS}, \infty}^2\delta^2\right)
    \Pp\left(S_T<\delta^4\right)\\
    &\geq \kappa \Pp\left(S_T<\delta^4\right).
\end{aligned}
\end{equation}

\medskip\noindent
\ref{con-25-i} If $\phi$ has zero drift, then $\Pp\left(S_T<\delta^4\right)>0$ for any
$\delta>0$ and $T>0$,  see e.g.\ \cite[Corollary 24.8]{sato} and observe that the support of $S_T$ contains zero. This, with \eqref{lowerbound}, gives the first assertion.

\medskip\noindent
\ref{con-25-ii}
By Chebyshev's inequality and Proposition~\ref{sub-41}.\ref{sub-41-iii}, there exists
$C_1=C_1(p)>0$ such that
\begin{align*}
    \Pp\left(S_T<\delta^4\right)
    &=1-\Pp\left(S_T^p\geq\delta^{4p}\right)\\
    &\geq1-\delta^{-4p}\Ee\left[S_T^p\right]\\
    &\geq1-C_1
    \left[\delta^4\phi^{-1}\left(\frac{1}{T}\right)\right]^{-p}.
\end{align*}
Combining this with \eqref{lowerbound}, we get for all $T>0$ and $0<\delta<\sqrt{1-\kappa}/(3\|Q\|_{\mathrm{HS},\infty})$
\begin{gather*}
    \Pp\left(\sup_{0 \leq t \leq T} |Z_t| < \delta\right)
    \geq\kappa\left(
    1-C_1
    \left[\delta^4\phi^{-1}\left(\frac{1}{T}\right)\right]^{-p}
    \right).
\end{gather*}
Fix $\delta\in(0,1)$ and pick $C_2=C_2(\kappa,\|Q\|_{\mathrm{HS},\infty})>1$ such that
\begin{gather*}
    \tilde{\delta}:=\frac{\delta}{C_2}\leq
    \delta\wedge
    \frac{\sqrt{1-\kappa}}{3\|Q\|_{\mathrm{HS},\infty}}.
\end{gather*}
Then, for all $\delta\in(0,1)$ and $T>0$
\begin{align*}
    \Pp\left(\sup_{0 \leq t \leq T} |Z_t| < \delta\right)
    &\geq
    \Pp\left(\sup_{0 \leq t \leq T} |Z_t| < \tilde{\delta}\right)\\
    &\geq\kappa\left(
    1-C_1
    \left[\tilde{\delta}^4\phi^{-1}\left(\frac{1}{T}\right)
    \right]^{-p}
    \right)\\
    &=\kappa\left(
    1-C_1C_2^{4p}
    \left[\delta^4\phi^{-1}\left(\frac{1}{T}\right)
    \right]^{-p}
    \right).
\end{align*}
Since $\inf_{s>0}\frac{\phi(2s)}{\phi(s)}>1$ implies
$\lim_{s\rightarrow\infty}\phi(s)=\infty$, we know that
the last term is positive if $T>0$ is small enough.
This completes the proof.
\end{proof}

In the proof of Theorem~\ref{con-11}, we need the following auxiliary result.
\begin{lemma}\label{con-33}
    Let $g:(0,1)\to(0,\infty)$ be an increasing function. Then
    \begin{gather*}
        \kappa := \log_2\left(\liminf_{s\to 0}\frac{g(2s)}{g(s)}\right) > 0
    \implies
        \forall\epsilon>0 \::\: \limsup_{s\to 0}\frac{g(s)}{s^{\kappa-\epsilon}} < \infty.
    \end{gather*}
\end{lemma}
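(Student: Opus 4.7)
The approach is a standard doubling/iteration argument exploiting the monotonicity of $g$ and the $\liminf$ hypothesis. I would split the conclusion $\kappa-\epsilon$ into two halves and use a margin of $\epsilon/2$ to absorb the discrepancy between dyadic points and general $t$.

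First I would fix $\epsilon\in(0,\kappa)$ and, using the definition of $\kappa$, pick $s_0\in(0,1)$ such that
\begin{gather*}
    \frac{g(s)}{g(s/2)} \geq 2^{\kappa-\epsilon/2}
    \quad\text{for all }0<s\leq s_0,
\end{gather*}
equivalently $g(s/2)\leq 2^{-(\kappa-\epsilon/2)}g(s)$. A straightforward induction then yields
\begin{gather*}
    g\bigl(s_0/2^n\bigr) \leq 2^{-n(\kappa-\epsilon/2)}\, g(s_0)\quad\text{for every }n\in\nat.
\end{gather*}

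Next, given an arbitrary small $t\in(0,s_0)$, I choose the unique $n\in\nat$ with $s_0/2^{n+1}<t\leq s_0/2^{n}$. Monotonicity of $g$ gives $g(t)\leq g(s_0/2^n)\leq 2^{-n(\kappa-\epsilon/2)} g(s_0)$, while the lower bound on $t$ gives $t^{\kappa-\epsilon}>s_0^{\kappa-\epsilon}\, 2^{-(n+1)(\kappa-\epsilon)}$. Dividing,
\begin{gather*}
    \frac{g(t)}{t^{\kappa-\epsilon}}
    \leq \frac{g(s_0)}{s_0^{\kappa-\epsilon}}\cdot 2^{\kappa-\epsilon}\cdot 2^{-n\epsilon/2}.
\end{gather*}
As $t\to 0$ we have $n\to\infty$, so the right-hand side is bounded (indeed tends to $0$), which gives $\limsup_{t\to 0} g(t)/t^{\kappa-\epsilon}<\infty$ as required.

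There is essentially no genuine obstacle here: the only care needed is to leave a buffer of $\epsilon/2$ between the dyadic rate $2^{\kappa-\epsilon/2}$ and the target polynomial rate $t^{\kappa-\epsilon}$, so that the loss incurred in passing from $t$ to the nearest dyadic point $s_0/2^n$ is absorbed and one still gets a negative exponent of $2^n$.
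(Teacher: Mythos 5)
Your proof is correct and follows essentially the same dyadic doubling argument as the paper: fix a threshold where the $\liminf$ inequality kicks in, iterate along the scale $s_0/2^n$, and use monotonicity of $g$ to interpolate between dyadic points. The only cosmetic difference is that you insert an $\epsilon/2$ buffer to absorb the dyadic discrepancy (and incidentally obtain that the limit is $0$, not merely that the $\limsup$ is finite), whereas the paper works with $\kappa-\epsilon$ directly and observes that $2^{-n}\delta<s$ already yields $2^{-n(\kappa-\epsilon)}\leq (s/\delta)^{\kappa-\epsilon}$ without any loss.
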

\begin{proof}
    It follows from the assumption that for any $\epsilon \in (0,\kappa)$ there exists some sufficiently small $\delta=\delta(\epsilon)>0$ such that
    \begin{gather*}
        2^{\kappa-\epsilon}g(s) \leq g(2s),
        \quad  0<s\leq\delta.
    \end{gather*}
    By iteration, we get for any $n\in\nat$,
    \begin{gather*}
        g(s)\leq 2^{-n(\kappa-\epsilon)}
        g\left(2^{n}s\right),
        \quad 0<s\leq \delta 2^{-n+1}.
    \end{gather*}
    For $s\in(0,\delta]$ there is a unique $n=n_s\in\nat$ such that $2^{-n}\delta < s \leq 2^{-n+1}\delta$. Since $g$ is increasing,
    \begin{gather*}
        g(s)
        \leq g\left(\delta 2^{-n+1}\right)
        \leq 2^{-n(\kappa-\epsilon)} g\left(2^n \delta 2^{1-n}\right)
        \leq\left(\frac{s}{\delta}\right)^{\kappa-\epsilon}
        g(2\delta)
        =\frac{g(2\delta)}{\delta^{\kappa-\epsilon}}\,
        s^{\kappa-\epsilon},
    \end{gather*}
    which implies that
    \begin{gather*}
        \limsup_{s\to 0}\frac{g(s)}{s^{\kappa-\epsilon}}
        \leq \frac{g(2\delta)}{\delta^{\kappa-\epsilon}}
        < \infty.
    \qedhere
    \end{gather*}
\end{proof}

\section{Invariant measures}\label{inv}
Recall that the solution to Eq.~\eqref{int-e02} has the following form:
\begin{gather*}
    X_t=\eup^{-tA} x+\int_0^t\eup^{-(t-s)A} F(X_s)\,\dif s+Z_t,
\end{gather*}
where
\begin{gather*}
    Z_t=\int_0^t\eup^{-(t-s)A} Q(X_{s-} )\,\dif W_{S_s}.
\end{gather*}

\begin{theorem} \label{inv-11}
Assume \eqref{A1}--\eqref{A3}, and $\liminf_{s\to 0}\phi(2s)/\phi(s) > 1$.
Then the system \eqref{int-e02} admits at least one invariant measure.
\end{theorem}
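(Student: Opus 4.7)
The plan is to apply the Krylov--Bogoliubov theorem. Fix $x\in H$ and consider the time-averaged laws
$$
\mu_T := \frac{1}{T}\int_0^T \Pp(X_t^x \in \cdot)\,\dif t, \qquad T\geq 1.
$$
Because $\gamma_n\to\infty$ by~\eqref{A2}, the operator $A^{-\theta}$ is compact on $H$ for every $\theta>0$, and hence the sublevel sets $\{y\in H : |A^\theta y|\leq R\}$ are relatively compact in $H$. Consequently, a uniform moment bound $\sup_{t\geq 1}\Ee[|A^\theta X_t|^p]<\infty$ for some $p,\theta>0$ will yield tightness of $\{\mu_T\}_{T\geq 1}$. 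Together with Feller continuity of the Markov semigroup $P_t f(x):=\Ee[f(X_t^x)]$, the Krylov--Bogoliubov theorem will then produce an invariant probability measure as a weak subsequential limit.

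To establish this moment bound, I choose $\theta\in(0,\tfrac 12)$ and $p\in(0,1)$ small enough that
$$
\frac{p}{2} < \log_2\left(\liminf_{s\to 0}\frac{\phi(2s)}{\phi(s)}\right) \leq \log_2\left(\sup_{s>0}\frac{\phi(2s)}{\phi(s)}\right) < \frac{1}{2\theta}.
$$
Such a choice is feasible: the left inequality uses the hypothesis $\liminf_{s\to 0}\phi(2s)/\phi(s)>1$, and the right one holds for $\theta$ small since $\phi(2s)\leq 2\phi(s)$ by subadditivity of $\phi$. Now decompose the mild solution as
$$
A^\theta X_t = A^\theta \eup^{-tA}x + \int_0^t A^\theta \eup^{-(t-s)A}F(X_s)\,\dif s + A^\theta Z_t,
$$
and bound each summand. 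Splitting $\eup^{-tA}=\eup^{-tA/2}\eup^{-tA/2}$ and combining~\eqref{int-e20} with~\eqref{int-e24} give $\|A^\theta\eup^{-tA}\|\leq C_\theta t^{-\theta}\eup^{-\gamma_1 t/2}$, so $|A^\theta\eup^{-tA}x|$ is uniformly bounded on $[1,\infty)$. Using~\eqref{A3} together with the same splitting, the drift convolution has norm at most $C_\theta\|F\|_\infty\int_0^\infty r^{-\theta}\eup^{-\gamma_1 r/2}\,\dif r<\infty$, uniformly in $t$, because $\theta<1$. Finally, Theorem~\ref{con-11}\ref{con-11-ii} yields $\sup_{t>0}\Ee[|A^\theta Z_t|^p]<\infty$. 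Since $p\leq 1$, the elementary subadditivity $(a+b+c)^p\leq a^p+b^p+c^p$ assembles these three estimates into the desired uniform moment bound.

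It remains to check the Feller property of $(P_t)$ on $C_b(H)$. Conditioning on a realization $\ell\in\Omega^{\mathds{S}}$ of the subordinator and using Lemma~\ref{l:DPWPS}, the equation~\eqref{int-e02-1} has Lipschitz coefficients by~\eqref{A1} and~\eqref{A3}; a standard It\^o isometry plus Gronwall argument then gives $\Ee^{\mathds{W}}[|X_t^\ell(x)-X_t^\ell(y)|^2]\leq C(t,\ell_t)|x-y|^2$, so dominated convergence combined with Lemma~\ref{l:DPWPS} implies continuity of $x\mapsto P_t f(x)$ for every $f\in C_b(H)$. The principal step of the proof is therefore the uniform moment bound; the hypothesis $\liminf_{s\to 0}\phi(2s)/\phi(s)>1$ is exactly what permits the required choice of parameters in Theorem~\ref{con-11}\ref{con-11-ii}.
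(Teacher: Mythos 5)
Your argument follows the same route as the paper: Krylov--Bogoliubov via a uniform bound on $\Ee[|A^\theta X_t|^p]$ obtained by splitting the mild formula into the free evolution, the drift convolution, and $Z_t$, with the stochastic part controlled by Theorem~\ref{con-11}.\ref{con-11-ii}; the parameter choice is exactly the condition~\eqref{con-e12}. The one place where you depart is the Feller step, and there your sketch has a latent gap. Conditioning on $\ell$ and applying Gronwall (with the Stieltjes integrator $\dif\ell_s$ in the noise term) gives a bound of the form
\begin{equation*}
\Ee^{\mathds W}\bigl[|X_t^\ell(x)-X_t^\ell(y)|^2\bigr] \leq C|x-y|^2\,\eup^{C(t+\ell_t)},
\end{equation*}
so the constant you call $C(t,\ell_t)$ grows exponentially in $\ell_t$. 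To pass this through $\Ee^{\mathds S}$ by dominated convergence you would need $\Ee^{\mathds S}[\eup^{C S_t}]<\infty$, which is not assumed and generally false for a subordinator. Either you must add a truncation on $\{S_t\le m\}$ and let $m\to\infty$ afterwards (the device the paper uses in the accessibility and Galerkin sections), or you should argue as the paper does and invoke the unconditional Lipschitz estimate $\Ee[|X_t(x)-X_t(y)|^2]\le C|x-y|^2$ from \cite[Theorem~9.29~(ii)]{PeZa07} directly, after which a straightforward $\varepsilon$--$\delta$ and Chebyshev argument finishes Feller continuity without any integrability issue in $\ell$. Everything else in your proof is sound, including the observation that $p\theta<1$ makes the singularity of $|A^\theta\eup^{-tA}x|^p$ at $t=0+$ integrable, so averaging over $[0,T]$ instead of the paper's $[1,T+1]$ is only a cosmetic difference.
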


\begin{proof}
Pick $p>0$ and $\theta\in(0,1)$ such that \eqref{con-e12} holds. Note that
\begin{gather*}
    \Ee \left[|A^\theta X_t|^p\right]
    \leq
    3^{p}\left(|A^\theta\eup^{-tA} x|^p+\left[\int_0^t \|A^\theta\eup^{-(t-s)A}\|\,\dif s\right]^p \|F\|^p_\infty
    + \Ee \left[|A^\theta Z_t|^p\right] \right).
\end{gather*}
By \eqref{int-e20}, we have for $0\leq s\leq t$
\begin{gather*}
    \|A^\theta\eup^{-(t-s)A}\|
    \leq \|A^\theta\eup^{-\frac 12 A(t-s)}\|\|\eup^{-\frac 12A(t-s)}\|
    \leq C_\theta (t-s)^{-\theta} \eup^{-\gamma_1(t-s)/2}
\end{gather*}
which implies that
\begin{align*}
    \sup_{t>0}\int_0^t \|A^\theta \eup^{-(t-s)A}\|\,\dif s
    &\leq C_\theta\sup_{t>0}\int_0^t (t-s)^{-\theta} \eup^{-\gamma_1(t-s)/2}\,\dif s\\
    &= C_\theta\int_0^\infty s^{-\theta} \eup^{-\gamma_1s/2}\,\dif s\\
    &=: C_{\theta,\gamma_1}.
\end{align*}
Then by Theorem \ref{con-11}.\ref{con-11-ii},
\begin{gather*}
    \Ee \left[|A^\theta X_t|^p\right]
    \leq 3^{p}\left[C_\theta^p t^{-p\theta}\eup^{-p\gamma_1t/2}|x|^p + C_{\theta,\gamma_1}^p\|F\|_\infty^p + C_{\theta,p}\right].
\end{gather*}
Hence, we obtain that for any $T>0$
\begin{gather*}
    \frac 1T\int_1^{T+1}\Ee \left[|A^\theta X_t|^p \right] \dif t
    \leq C_{\theta,p,\gamma_1, \|F\|_\infty, |x|}.
\end{gather*}
Because of \eqref{A2}, the inverse $A^{-1}$ is a compact operator and, therefore, the set
\begin{gather*}
    \Ccal_K:=\left\{x \in H\,;\,|A^\theta x| \leq K\right\}
\end{gather*}
is compact in $H$. By the Chebyshev inequality,
\begin{align*}
	\frac 1T\int_1^{T+1} P_t(x, H \setminus {\Ccal}_K)\,\dif t
    &=\frac 1T\int_1^{T+1} \Pp\left(|A^\theta X_t|>K\right) \dif t\\
	&\leq \frac 1T\int_1^{T+1} \frac{\Ee \left[|A^\theta X_t|^p\right]}{K^p}\, \dif t\\
	&\leq C_{\theta,p,\gamma_1, \|F\|_\infty, |x|} K^{-p}.
\end{align*}
This yields that $\left(\frac 1T\int_1^{T+1} P_t(x,.)\,\dif t\right)_{T>0}$ is tight and thus admits a subsequence which converges to an invariant measure, as long as the transition probability of $X_t$ has the Feller property \cite[Theorem 3.1.1]{DaZa96}.

It remains to show that $(X_t)_{t \geq 0}$ has the Feller property. By \cite[Theorem 9.29 (ii)]{PeZa07}, we have
\begin{gather*}
    \Ee \left[|X_t(x)-X_t(y)|^2\right]
    \leq
    C |x-y|^2, \quad t>0.
\end{gather*}
Fix $\delta>0$ and set $D = D_\delta = \{|X_t(x)-X_t(y)|\leq \delta\}$. Since $f$ is continuous, we can assume that $\delta=\delta(\epsilon)$ is so small that $|f(X_t(x)) - f(X_t(y))|\leq \epsilon$ on $D$. For every bounded continuous function $f$,
\begin{align*}
|P_t f(y)-P_t f(x)|&=|\Ee f(X^x_t)-\Ee f(X^y_t)| \\
& \leq \left|\Ee\left[(f(X_t(x))-f(X_t(y)))\I_{D}\right]\right| + \left|\Ee\left[(f(X_t(x))-f(X_t(y)))\I_{D^c}\right]\right|\\
& \leq \epsilon + 2\|f\|_\infty \Pp(|X_t(x)-X_t(y)| > \delta)\\
& \leq \epsilon + \frac{2C\|f\|_\infty}{\delta^2}|x-y|^2,
\end{align*}
where we use Chebyshev's inequality in the last step. Since $\epsilon>0$ is arbitrary, we see that $P_tf(y)\to P_t f(x)$ as $y\to x$, i.e.\ we have the Feller property.
\end{proof}

\section{Accessibility and an associated control problem}  \label{acc}
As before, we use the independence of $\mathds{S} = (S_t)_{t\geq 0}$ and $\mathds{W} = (W_t)_{t\geq 0}$ to represent $\Pp$ as $\Pp = \Pp^{\mathds{S}}\otimes\Pp^{\mathds{W}}$. This means that we can condition on the event $S_\cdot = \ell_\cdot$, which is an increasing c\`adl\`ag function $[0,\infty)\ni t \mapsto \ell_t$ such that $\ell_0=0$, and consider the following auxiliary equation:
\begin{equation}\label{int-e12}
    \dif X^{\ell}_t=[-AX^{\ell}_t+F(X^{\ell}_t)]\,\dif t+Q(X^{\ell}_{t-})\,\dif W_{\ell_t}, \quad X^{\ell}_{0}=x \in H,
\end{equation}
whose (unique, mild) solution \cite[Theorem 7.4]{DaZa92} is
\begin{gather} \label{int-e14}
    X^{\ell}_{t}=\eup^{-tA}x+\int_0^t \eup^{-(t-s)A} F(X^{\ell}_s)\,\dif s+ \int_0^t \eup^{-(t-s)A} Q(X^{\ell}_{s-} )\,\dif W_{\ell_s}.
\end{gather}
For any real-valued, bounded and measurable function on $H$, $f \in \Bcal_b(H,\real)$, we have
\begin{equation}\label{int-e16}
    \Ee \left(f(X_t)\right)
    = \Ee^{\mathds{S}} \left(\Ee^{\mathds{W}}\left[ f(X^{\ell}_t)\right]\big|_{\ell=S}\right).
\end{equation}
\vskip 3mm

From now on we need the following additional condition:
\begin{align}
\label{A4}\tag{\bfseries A4}
    &\parbox[t]{.9\linewidth}{There exist some $\delta>0$ with $\int_{0^+}\phi(s^{-2\delta})\,\dif s<\infty$ and $C>0$ such that for all $x\in H$
    \begin{center}
    $
        \left\|Q(x)^{-1} \eup^{-tA}\right\| \leq C t^{-\delta}.
    $
    \end{center}}
\end{align}
\begin{remark}
\eqref{A4} means that the noise is not too weak; this is necessary to guarantee accessibility of the solution to Eq.~\eqref{int-e02}. Let us illustrate this for the $\frac\alpha2$-stable subordinator case, i.e.\ for $\phi(s)=s^{\alpha/2}$, $0<\alpha<2$.  Assume, for simplicity, $Q(x)=A^\beta$ with $\beta \in \mathds R$. From \eqref{A4} we know that $\delta<1/\alpha$. By \eqref{int-e20}, we need $\beta>-1/\alpha$, which means that the strength of the noise is bounded from below. The requirement $\delta<1/\alpha$ is also consistent with \cite[Assumption 2.2 (A4)]{PrShXuZa12} for SPDEs driven by additive $\alpha$-stable noises.
\end{remark}

As before, we write $\ell:[0,\infty)\to[0,\infty)$ for a fixed trajectory of the subordinator $(S_t)_{t \geq 0}$. Since $\Pp^{\mathds{S}}$-almost all $\ell$ are strictly increasing, we can define the (generalized, right-continuous) inverse of $\ell$:
\begin{gather*}
    \ell^{-1}_t:=\inf\{s\geq 0\,:\,\ell_s>t\},\quad t\geq 0.
\end{gather*}
It is easy to check that we have for any measurable function $f:[0,\infty)\to[0,\infty)$
\begin{gather*}
    \int_{0}^{t} f\left(\ell^{-1}_{s}\right) \dif s
    = \int_{0}^{\ell^{-1}_{t}} f(s) \,\dif \ell_{s}
    \quad \text{for all $t>0$}.
\end{gather*}
See also \cite{Zhang13} for more applications of this transform.

The following proposition is crucial in order to prove the accessibility to zero of \eqref{int-e02}.
\begin{proposition}\label{acc-11}
    Assume that \eqref{A4} holds and fix $t>0$ and $m>0$. For $\Pp^{\mathds{S}}$ almost every trajectory $\ell$ of $S$ satisfying $\ell^{-1}_t \leq m$, we have
    \begin{gather}
    \label{acc-e12}
        \left|\int_0^{t} Q(z)^{-1} \eup^{-\ell^{-1}_s A} x\,\dif s\right| \leq C_1|x|,\\
    \label{acc-e14}
        \int_0^{t} \left|Q(z)^{-1} \eup^{-\ell^{-1}_s A} x\right|^2\,\dif s \leq C_2|x|^2
    \end{gather}
    for all $z \in H$, where $C_1,C_2>0$ are constants which may depend on $t$ and $m$.
\end{proposition}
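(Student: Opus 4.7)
The plan is to reduce everything to a finiteness statement for a singular integral against~$\mathds{S}$ using the time-change identity recalled just before the proposition, and then invoke Proposition~\ref{sub-25}. First, \eqref{A4} provides an operator-norm bound that is uniform in $z\in H$:
\begin{gather*}
    \left|Q(z)^{-1}\eup^{-\ell^{-1}_{s}A}x\right|
    \leq \left\|Q(z)^{-1}\eup^{-\ell^{-1}_{s}A}\right\|\cdot|x|
    \leq C\left(\ell^{-1}_{s}\right)^{-\delta}|x|,
\end{gather*}
so squaring and integrating shows that~\eqref{acc-e14} will follow once $\int_{0}^{t}(\ell^{-1}_{s})^{-2\delta}\,\dif s$ is controlled by a finite quantity depending only on~$\ell$, $t$ and $m$.

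To this end I would apply the change-of-variable identity $\int_{0}^{t}f(\ell^{-1}_{s})\,\dif s=\int_{0}^{\ell^{-1}_{t}}f(s)\,\dif\ell_{s}$ with $f(s)=s^{-2\delta}$, combined with the hypothesis $\ell^{-1}_{t}\leq m$ and positivity of the integrand, to obtain
\begin{gather*}
    \int_{0}^{t}\left(\ell^{-1}_{s}\right)^{-2\delta}\dif s
    =\int_{0}^{\ell^{-1}_{t}}s^{-2\delta}\,\dif\ell_{s}
    \leq \int_{0}^{m}s^{-2\delta}\,\dif\ell_{s}.
\end{gather*}
Viewing the last quantity for a generic sample path of $\mathds{S}$, the task reduces to checking that $\int_{0}^{m}s^{-2\delta}\,\dif S_{s}<\infty$ $\Pp^{\mathds{S}}$-almost surely. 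Applying Proposition~\ref{sub-25} to $f(s)=s^{-2\delta}\I_{(0,m]}(s)$ reduces this in turn to $\int_{0}^{m}\phi(s^{-2\delta})\,\dif s<\infty$, which is exactly the integrability condition postulated in~\eqref{A4} (the integral is finite near $s=0$ by hypothesis, while on $[s_{0},m]$ the integrand is bounded because $\phi$ is continuous and increasing with $\phi(0+)=0$). Consequently the random constant $C_{2}(\ell):=C^{2}\int_{0}^{m}s^{-2\delta}\,\dif\ell_{s}$ is finite for $\Pp^{\mathds{S}}$-almost every $\ell$ with $\ell^{-1}_{t}\leq m$, which yields~\eqref{acc-e14}. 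Finally, \eqref{acc-e12} drops out in a single line from~\eqref{acc-e14} via the triangle and Cauchy--Schwarz inequalities:
\begin{gather*}
    \left|\int_{0}^{t}Q(z)^{-1}\eup^{-\ell^{-1}_{s}A}x\,\dif s\right|
    \leq \int_{0}^{t}\left|Q(z)^{-1}\eup^{-\ell^{-1}_{s}A}x\right|\dif s
    \leq \sqrt{t}\left(\int_{0}^{t}\left|Q(z)^{-1}\eup^{-\ell^{-1}_{s}A}x\right|^{2}\dif s\right)^{1/2},
\end{gather*}
so one may take $C_{1}:=\sqrt{t\,C_{2}}$.

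The only genuinely non-routine point is recognizing that the time-change identity converts the singular $s$-integral into the Lebesgue--Stieltjes integral $\int_{0}^{m}s^{-2\delta}\,\dif\ell_{s}$, and that Proposition~\ref{sub-25} combined with~\eqref{A4} is precisely tailored to guarantee its almost-sure finiteness on the event $\{\ell^{-1}_{t}\leq m\}$; once this is in place, the operator bound from~\eqref{A4} and Cauchy--Schwarz take care of the rest.
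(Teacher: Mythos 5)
Your proposal is correct and follows essentially the same route as the paper: apply the operator bound from \eqref{A4}, use the time-change identity $\int_0^t f(\ell^{-1}_s)\,\dif s = \int_0^{\ell^{-1}_t} f(s)\,\dif\ell_s$ together with $\ell^{-1}_t\leq m$, invoke Proposition~\ref{sub-25} with the integrability condition in \eqref{A4} to get a.s.\ finiteness of $\int_0^m s^{-2\delta}\,\dif S_s$, and finish \eqref{acc-e12} by Cauchy--Schwarz. The only small point the paper adds that you omit is the remark that by restricting to $t,m\in\nat$ the exceptional $\Pp^{\mathds{S}}$-null set can be chosen independently of $t$ and $m$, but this is a routine observation.
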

\begin{proof}
    Using the Cauchy--Schwarz inequality along with \eqref{A4} we get
    \begin{align*}
        \left|\int_0^{t} Q(z)^{-1} \eup^{-\ell^{-1}_s A} x\,\dif s\right|^2
        &\leq t \int_0^{t} \left|Q(z)^{-1} \eup^{-\ell^{-1}_s A} x\right|^2\,\dif s\\
        &\leq t C^2|x|^2 \int_0^{t} (\ell_s^{-1})^{-2\delta}\,\dif s\\
        &=t C^2|x|^2 \int_0^{\ell^{-1}_t} s^{-2\delta}\,\dif \ell_s
        \leq t C^2|x|^2 \int_0^{m} s^{-2\delta}\,\dif \ell_s.
    \end{align*}
    Since $\int_{0+} \phi(s^{-2\delta})\,\dif s<\infty$,
   we know from Proposition \ref{sub-25} that $\int_0^{m} s^{-2\delta}\,\dif S_s$ is a.s.\ finite.

    This proves both \eqref{acc-e12} and \eqref{acc-e14} for $\Pp^{\mathds{S}}$-almost all trajectories $\ell$ of $S$  satisfying $\ell^{-1}_t\leq m$. Considering $t,m\in\nat$, we see that the $\Pp^{\mathds{S}}$-null can be chosen independently of $t$ and $m$.
\end{proof}

Since we consider in Eq.~\eqref{int-e02} multiplicative noise, we cannot apply the methods developed in \cite{WaXu18} and \cite{DoWaXu18} to show irreducibility. Alternatively, we resort to showing that the system \eqref{int-e02} is accessible to $0$, note that accessibility to $0$ is often used as a replacement of irreducibility when one proves ergodicity \cite{HaMa06}.

\begin{theorem} \label{acc-15}
    If the assumptions \eqref{A1}--\eqref{A4} are fulfilled and the L\'evy measure $\nu$ satisfies $\nu((0,\infty))=\infty$, then the system \eqref{int-e02} is \emph{accessible} to zero, i.e., for all $x \in H$, $\epsilon >0$, $T>0$, we have
\begin{gather*}
    \Pp\left(|X_T(x)|<\epsilon \right)>0.
\end{gather*}
\end{theorem}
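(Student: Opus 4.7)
The plan is to condition on the subordinator, apply a Girsanov transformation on the underlying cylindrical Wiener process so as to drive the mild solution toward zero at time~$T$, and conclude via the small-ball estimate for the stochastic convolution. By Lemma~\ref{l:DPWPS},
\[
\Pp(|X_T(x)|<\epsilon) = \Ee^{\mathds{S}}\bigl[\Pp^{\mathds{W}}(|X^\ell_T(x)|<\epsilon)\big|_{\ell=S}\bigr],
\]
so it suffices to exhibit a set $E\subset\Omega^{\mathds{S}}$ with $\Pp^{\mathds{S}}(E)>0$ on which $\Pp^{\mathds{W}}(|X^\ell_T|<\epsilon)>0$. Since $\nu((0,\infty))=\infty$, the subordinator is strictly increasing with $\ell_T>0$ a.s., and Proposition~\ref{sub-25} applied to $f(t)=t^{-2\delta}\I_{(0,T)}(t)$ together with the integrability in~\eqref{A4} yields $\int_0^T s^{-2\delta}\,dS_s<\infty$ a.s.; hence finite constants $\eta,m,K>0$ can be chosen so that
\[
E:=\Bigl\{\ell:\ell_T\geqslant\eta,\ \ell^{-1}_T\leqslant m,\ \int_0^T s^{-2\delta}\,d\ell_s\leqslant K\Bigr\}
\]
has positive $\Pp^{\mathds{S}}$-measure.

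Fix $\ell\in E$. The first step is to construct a deterministic control $w^\ell\in L^2([0,\ell_T];H)$ such that the noiseless steered equation
\[
d\tilde X_t = [-A\tilde X_t+F(\tilde X_t)]\,dt + Q(\tilde X_{t-})\,w^\ell_{\ell_t}\,d\ell_t,\qquad \tilde X_0=x,
\]
satisfies $\tilde X_T=0$. Guided by Proposition~\ref{acc-11}, one takes an ansatz of the form $w^\ell_\tau=-\ell_T^{-1}\,Q(\tilde X_{\ell^{-1}_\tau-})^{-1}\,e^{-\ell^{-1}_\tau A}\,\xi^\ell$, with $\xi^\ell\in H$ determined by a Picard-type fixed-point argument absorbing both the initial-condition contribution $e^{-TA}x$ and the nonlinear $F$-contribution to the mild formula (the cancellation uses the semigroup identity $e^{-(T-t)A}e^{-tA}=e^{-TA}$ and $\int_0^T d\ell_t=\ell_T$). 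Applying Proposition~\ref{acc-11} with $z=\tilde X_{\ell^{-1}_\tau-}$ and $x$ replaced by $\xi^\ell$ gives a deterministic bound on $\int_0^{\ell_T}|w^\ell_\tau|^2\,d\tau$ uniform in $\ell\in E$, so the Dol\'eans exponential $\mathcal E_{\ell_T}$ of $\int\langle w^\ell,dW\rangle$ satisfies Novikov's criterion; the tilted measure $\tilde\Pp^{\mathds{W}}:=\mathcal E_{\ell_T}\,\Pp^{\mathds{W}}$ is equivalent to $\Pp^{\mathds{W}}$, and $\tilde W_\tau:=W_\tau-\int_0^\tau w^\ell_s\,ds$ is a cylindrical Wiener process under it.

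Under $\tilde\Pp^{\mathds{W}}$, the SDE~\eqref{int-e12} acquires the extra drift $Q(X^\ell_{t-})\,w^\ell_{\ell_t}\,d\ell_t$, so $R_t:=X^\ell_t-\tilde X_t$ solves a Lipschitz SDE with $R_0=0$ driven by the stochastic convolution $\tilde Z^\ell$ built from $\tilde W$. Gronwall's inequality together with \eqref{A1} and \eqref{A3} yields $\sup_{t\leqslant T}|R_t|\leqslant C\sup_{t\leqslant T}|\tilde Z^\ell_t|$; Theorem~\ref{con-25}.\ref{con-25-ii}, whose proof relies only on the Hilbert--Schmidt boundedness of $Q$ and therefore applies to $\tilde Z^\ell$ under $\tilde\Pp^{\mathds{W}}$, gives $\tilde\Pp^{\mathds{W}}(\sup_{t\leqslant T}|\tilde Z^\ell_t|<\epsilon/C)>0$. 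Since $\tilde X_T=0$, it follows that $\tilde\Pp^{\mathds{W}}(|X^\ell_T|<\epsilon)>0$; equivalence of measures yields $\Pp^{\mathds{W}}(|X^\ell_T|<\epsilon)>0$, and integrating over $E$ completes the proof.

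\textbf{Main obstacle.}
The technically delicate step is the construction and uniform $L^2$-control of $w^\ell$: since the drift of the subordinator is not assumed positive, $d\ell_t$ may be purely atomic and singular with respect to $dt$, so one cannot simply write $w^\ell_{\ell_t}\propto Q^{-1}(\cdot)(dt/d\ell_t)$. Instead, the cancellations must be engineered through the range of the semigroup $e^{-tA}$, which couples $w^\ell$, $\tilde X$ and $F(\tilde X)$ into a Picard-type fixed-point equation for $\xi^\ell$; keeping $\|w^\ell\|_{L^2([0,\ell_T];H)}$ uniformly bounded on~$E$ while solving this fixed point is precisely the junction point at which \eqref{A4}, Proposition~\ref{acc-11}, and the lower bound $\ell_T\geqslant\eta$ (available because $\nu((0,\infty))=\infty$) must be used together.
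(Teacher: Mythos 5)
Your overall plan — condition on $\ell$, steer the noiseless equation with a deterministic control, use Girsanov or a support argument to show the noisy path visits a neighborhood of the steered one, and integrate over $\ell\in E$ — is the same philosophy as the paper (Lemma~\ref{acc-17}), and your use of \eqref{A4} via Proposition~\ref{acc-11} is on target. However, there is a genuine gap in the core controllability step. You claim a single control of the form $w^\ell_\tau=-\ell_T^{-1}Q(\tilde X_{\ell^{-1}_\tau-})^{-1}\eup^{-\ell^{-1}_\tau A}\xi^\ell$ with an appropriate $\xi^\ell\in H$ yields $\tilde X_T=0$ \emph{exactly}, absorbing both $\eup^{-TA}x$ and the term $\int_0^T\eup^{-(T-s)A}F(\tilde X_s)\,\dif s$. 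With this ansatz, the feedback contribution in the mild formula collapses to $-\eup^{-TA}\xi^\ell$, so the requirement $\tilde X_T=0$ forces $\eup^{-TA}(\xi^\ell-x)=\int_0^T\eup^{-(T-s)A}F(\tilde X_s)\,\dif s$. But $\operatorname{Range}(\eup^{-TA})$ is a proper (dense) subspace of $H$ under \eqref{A2}, and the $F$-integral has no reason to lie in it, so in general no $\xi^\ell\in H$ satisfies this; the Picard fixed point you invoke has nothing to converge to. The paper sidesteps this exactly: it constructs the control $u$ so that only the \emph{linear} part $\Phi$ satisfies $\Phi_{\tilde T}=0$, which then gives the approximate bound $|Y_{\tilde T}|=|\int_0^{\tilde T}\eup^{-(\tilde T-s)A}F(Y_s)\,\dif s|\leq\|F\|\tilde T$, accepts an $\epsilon/2$-target on a short interval $\tilde T<\min\{\epsilon/(2\|F\|),\,1/(2\|F\|_{\mathrm{Lip}})\}$, and recovers accessibility at an arbitrary time $T$ by iterating via the Chapman--Kolmogorov equation. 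Your proposal omits this localization-and-iteration step entirely.

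A secondary issue concerns the conditional small-ball step. Theorem~\ref{con-25} is an average over $\Pp^{\mathds{S}}$; its proof hinges on restricting to the event $\{S_T<\delta^4\}$ inside the $\mathds{S}$-expectation, so it is not available once $\ell$ is fixed. Conditionally on $\ell$, the bound from Lemma~\ref{con-27} only gives a Chebyshev small-ball when $\ell_T$ is small, which conflicts with your requirement $\ell_T\geqslant\eta$ (and would also fail for subordinators with positive drift, which the theorem does not exclude). The paper instead formulates the needed estimate as $\Pp^{\mathds{W}}(D^*(t,Q,X,W,u)\leq\gamma)>0$ and cites \cite[Theorem~7.4.1]{DaZa96}, which is designed precisely for the comparison of $\int Q(X_{s-})\,\dif W_{\ell_s}$ with $\int Q(X_{s-})\,\dif u_{\ell_s}$ for a random Hilbert--Schmidt integrand and a deterministic $u$ with $\dot u\in L^2$; this is not something you can obtain from Theorem~\ref{con-25} after conditioning. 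Rewriting your argument in the paper's framework (exact steering of the linear part only, short time interval, iteration, and the Da~Prato--Zabczyk support theorem in place of Theorem~\ref{con-25}) would close both gaps.
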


\begin{proof}
Observe that for every $m\in\nat$
\begin{equation}\label{acc-e16}
    \Pp\left(|X_T(x)|<\epsilon \right)
    \geq
    \Ee^{\mathds{S}}\left[\Pp^{\mathds{W}}\left(|X^\ell_T(x)|<\epsilon \right)\middle|_{\ell=S} \I_{\{S_T \leq m\}}\right].
\end{equation}
Since $\lim_{m\to\infty}\Pp^{\mathds{S}}(S_T \leq m)=1$, we can choose $m$ in such a way that $\Pp^{\mathds{S}}(S_T\leq m)> 0$.
Because of the assumption $\nu((0,\infty))=\infty$, almost all trajectories of $S$ are strictly increasing. This observation and Lemma \ref{acc-17} below give
\begin{gather*}
    \Ee^{\mathds{S}}\left[\Pp^{\mathds{W}}\left(|X^\ell_T(x)|<\epsilon \right)\middle|_{\ell=S}\right]>0.
\qedhere
\end{gather*}
\end{proof}

\begin{lemma} \label{acc-17}
Let $\ell$ be a strictly increasing trajectory which is right continuous with left limits.
If the assumptions \eqref{A1}--\eqref{A4} hold, then the system \eqref{int-e12} is \emph{accessible} to zero, i.e., for all $x \in H$, $\epsilon >0$, $T>0$, we have for all $m\in\nat$ and $\Pp^{\mathds{S}}$-almost all $\ell$ with $\ell_T\leq m$
\begin{gather*}
    \Pp^{\mathds{W}} \left(|X^\ell_T(x)|<\epsilon \right)>0.
\end{gather*}
\end{lemma}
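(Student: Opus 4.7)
The plan is to use a Girsanov transformation of the cylindrical Brownian motion $\mathds{W}$, with the strictly increasing trajectory $\ell$ held fixed, so as to recast the statement as a small ball estimate for a stochastic convolution. The shift of $\mathds{W}$ is engineered so that the drift it produces cancels the deterministic part of the mild equation~\eqref{int-e14} at time $T$, and what remains of $X^\ell_T$ is then driven entirely by a stochastic convolution to which Theorem~\ref{con-25} applies.

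Concretely, I would choose the shift
$$u_s := \frac{1}{\ell_T}\, Q\bigl(X^\ell_{\ell^{-1}_s -}\bigr)^{-1} v\bigl(\ell^{-1}_s\bigr),\qquad s\in(0,\ell_T],$$
where $v:[0,T]\to H$ is a bounded deterministic function to be selected, extended by $u_s=0$ for $s>\ell_T$. Setting $\hat W_r := W_r - \int_0^{r\wedge \ell_T} u_\sigma\,d\sigma$, Proposition~\ref{acc-11}\eqref{acc-e14} applied pathwise gives the deterministic bound $\int_0^{\ell_T}|u_s|^2\,ds\leq C=C(T,m,\|v\|_\infty)$, which yields the Novikov condition; hence $\mathcal E(\int_0^{\ell_T}\langle u_s,dW_s\rangle)$ defines an equivalent measure $\hat{\Pp}^{\mathds W}$ under which $\hat W$ is a cylindrical Brownian motion.

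Substituting the identity $dW_{\ell_t}=d\hat W_{\ell_t}+u_{\ell_t}\,d\ell_t$ into \eqref{int-e14}, the mild equation for $X^\ell$ under $\hat{\Pp}^{\mathds W}$ becomes
$$X^\ell_t = e^{-tA}x + \int_0^t e^{-(t-s)A} F(X^\ell_s)\,ds + \frac{1}{\ell_T}\int_0^t e^{-(t-s)A} v(s)\,d\ell_s + \hat Z^\ell_t,$$
with stochastic convolution $\hat Z^\ell_t = \int_0^t e^{-(t-s)A}Q(X^\ell_{s-})\,d\hat W_{\ell_s}$. I would then select $v$ so that the deterministic companion $\bar y$ of this equation (obtained by dropping $\hat Z^\ell$) satisfies $\bar y(T)=0$. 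The natural starting point is $v_0(s)=-e^{-sA}x$, for which the calculation $\frac{1}{\ell_T}\int_0^T e^{-(T-s)A}e^{-sA}x\,d\ell_s=e^{-TA}x$ cancels the initial-condition term, and the residual $F$-contribution is then absorbed by a contractive perturbation of $v_0$ obtained by a Banach fixed point in $C([0,T];H)$, using boundedness and Lipschitz continuity of $F$ together with \eqref{A4}. Once $\bar y(T)=0$, a Gronwall estimate based on the Lipschitz continuity of $F$ and $Q$ gives $|X^\ell_T|\leq C(T,F,Q)\sup_{t\leq T}|\hat Z^\ell_t|$; since $\hat Z^\ell$ under $\hat{\Pp}^{\mathds W}$ is a stochastic convolution with the same law as $Z^\ell$ under $\Pp^{\mathds W}$, Theorem~\ref{con-25} (applied conditionally on $\ell$, where $\nu((0,\infty))=\infty$ ensures the small ball regime is nondegenerate) gives $\hat{\Pp}^{\mathds W}(|X^\ell_T|<\epsilon)>0$, and mutual absolute continuity transfers this to $\Pp^{\mathds W}$.

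The main obstacle will be the implicit control step: because the noise in \eqref{int-e12} is multiplicative, the control $v$ enters through $Q(X^\ell)$, so that $v$ and the trajectory $X^\ell$ are coupled, and one cannot simply superimpose a deterministic drift as in the additive noise setting of \cite{WaXu18,DoWaXu18}. Proposition~\ref{acc-11} is tailor made for exactly this coupling: the factor $Q^{-1}$ in $u_s$ produces a random integrand whose $L^2$-norm is nonetheless controlled deterministically by $\ell_T\leq m$, which is what allows the Girsanov density to be a true martingale along random trajectories of $X^\ell$.
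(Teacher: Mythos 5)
Your high-level idea (absolute continuity via Girsanov, with a feedback drift that steers the solution toward zero) is a genuinely different route from the paper, which instead constructs a deterministic controller by Picard iteration and then invokes the Wiener-support theorem \cite[Theorem~7.4.1]{DaZa96} to put the Brownian trajectory close to that controller. However, as written the Girsanov argument has two gaps, each fatal on its own.

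First, the claim that ``$\hat Z^\ell$ under $\hat\Pp^{\mathds W}$ is a stochastic convolution with the same law as $Z^\ell$ under $\Pp^{\mathds W}$'' is false. Under $\hat\Pp^{\mathds W}$ the process $\hat W$ is a cylindrical Brownian motion, but $X^\ell$ is then a solution of the \emph{modified} mild equation containing the extra drift $\frac{1}{\ell_T}\int_0^t \eup^{-(t-s)A}v(s)\,\dif\ell_s$, not of the original one; hence the integrand $Q(X^\ell_{s-})$ in $\hat Z^\ell_t=\int_0^t\eup^{-(t-s)A}Q(X^\ell_{s-})\,\dif\hat W_{\ell_s}$ has a different law, and so does $\hat Z^\ell$. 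There is no pair $(\tilde X,\hat W)$ under $\hat\Pp^{\mathds W}$ with the law of $(X^\ell,W)$ under $\Pp^{\mathds W}$ that simultaneously reproduces $\hat Z^\ell$. Second, even granting that identification, Theorem~\ref{con-25} cannot be ``applied conditionally on $\ell$''. Its proof hinges on the event $\{S_T<\delta^4\}$ having positive $\Pp^{\mathds S}$-probability; for a single fixed trajectory with $\ell_T\leq m$ (and $m$ not small), the only conditional input is Lemma~\ref{con-27}, which gives $\Ee^{\mathds W}\bigl[\sup_{t\le T}|Z^\ell_t|^2\bigr]\leq 9\|Q\|^2_{\mathrm{HS},\infty}\,m$, and Chebyshev from this yields a bound on $\Pp^{\mathds W}(\sup|Z^\ell_t|\geq\delta)$ that exceeds $1$ unless $m\|Q\|^2_{\mathrm{HS},\infty}/\delta^2<1$. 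So the small ball probability you need is simply not available along fixed $\ell$. The device the paper uses precisely to supply this missing ingredient is the Da~Prato--Zabczyk support theorem, which says the Brownian increment can be made uniformly close, with positive $\Pp^{\mathds W}$-probability, to the $L^2$ control $\dot u$ built from $Q^{-1}$; that replaces the small ball argument and avoids both issues above.
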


For the proof of Lemma \ref{acc-17} we need to study some auxiliary control problems. Consider the following problem:
\begin{equation}\label{acc-e18}
    \dif Y^\ell_t
    = [-A  Y^{\ell}_t+F(Y^{\ell}_t)]\,\dif t+Q(Y^{\ell}_{t-}) \,\dif u_{\ell_t},
\end{equation}
where $u$ is some controller which is to be determined. We say that \eqref{acc-e18} is
\begin{description}
\item[\normalfont\emph{exactly controllable}]
    if for all $x,y \in H$ and $T>0$ there exists some $u \in C([0,\ell_T];H)$ such that
    \begin{equation}\label{acc-e20}
        Y^\ell_0=x, \quad Y^\ell_T=y.
    \end{equation}
\item[\normalfont\emph{approximately controllable}]
    if for all $x,y\in H$, $T>0$ and $\epsilon >0$, there exists some $u \in C([0,\ell_T];H)$ such that
    \begin{equation}\label{acc-e22}
        Y^\ell_0=x, \quad |Y^\ell_T-y|<\epsilon,
    \end{equation}
\item[\normalfont\emph{approximately controllable to $0$}]
    if for all $x\in H$, $T>0$ and $\epsilon >0$, there exists some $u \in C([0,\ell_T];H)$ such that
    \begin{equation}\label{acc-e24}
        Y^\ell_0=x, \quad |Y^\ell_{T}|<\epsilon,
    \end{equation}
    If the target point $0$ is replaced by some fixed $y_0 \in H$, then the problem \eqref{acc-e18} is said to be \emph{approximately controllable} to $y_0$.
\end{description}

\begin{proof} [Proof of Lemma \ref{acc-17}]
Choose some $\tilde T \in (0,{1}/{\|F\|_{{\textrm{Lip}}}})$, we shall use an iteration procedure to show that there exists some
$u \in C([0, \ell_{\tilde T}], H)$ with bounded total variation such that
\begin{equation} \label{acc-e26}
\begin{cases}
    \dif \Phi^\ell_t=-A  \Phi^{\ell}_t\,\dif t+Q(Y^{\ell}_{t-})\,\dif u_{\ell_t},\\
    \Phi^\ell_0=x, \\
    \Phi^\ell_{\tilde T}=0,
\end{cases}
\end{equation}
where $Y^\ell_t$ is the solution to Eq.~\eqref{acc-e18}.
In order to keep notations simple, we drop in this proof the superscripts ``$\ell$'' of $\Phi^{\ell}$ and $Y^{\ell}$.
Because $Q$ depends on $Y$, we need to use an iteration procedure to find the controller $u$.

Define $Y^{(0)}_t \equiv x$ for all $t \in [0, \tilde T]$. We consider the following control problem:
\begin{equation} \label{acc-e28}
\begin{cases}
    \dif \Phi^{(1)}_t = -A  \Phi^{(1)}_t\,\dif t+Q(Y^{(0)}_{t-})\,\dif u^{(1)}_{\ell_t},\\
    \Phi^{(1)}_0=x,\\
    \Phi^{(1)}_{\tilde T}=0.
\end{cases}
\end{equation}
Choose
\begin{gather*}
    u^{(1)}_{\ell_t}
    = -\frac{1}{\ell_{\tilde T}} \int_0^{t} Q(Y^{(0)}_{t-})^{-1} \eup^{-sA} x\,\dif \ell_s.
\end{gather*}
It is easy to check that the assumption \eqref{A4} ensures $u^{(1)}_{\ell_t} \in H$ for all $t \geq 0$.
We have
\begin{align*}
    \Phi^{(1)}_t
    &=\eup^{-tA} x-\frac{1}{\ell_{\tilde T}} \int_0^t \eup^{-(t-s)A} Q(Y^{(0)}_{s-})  Q(Y^{(0)}_{s-})^{-1} \eup^{-sA} \,\dif \ell_s
    = \frac{\ell_{\tilde T}-\ell_{t}}{\ell_{\tilde T}} \eup^{-tA} x.
\end{align*}
Further, define
\begin{gather*}
    Y^{(1)}_t
    = \eup^{-tA} x+\int_0^t \eup^{-(t-s)A} F(Y^{(0)}_s)\,\dif s+\int_0^t \eup^{-(t-s)A} Q(Y^{(0)}_{s-})\,\dif u^{(1)}_{\ell_s};
\end{gather*}
using Proposition~\ref{acc-11} it is easy to see that
\begin{equation}\label{acc-e30}
    Y^{(1)}_t
    = \int_0^t \eup^{-(t-s)A} F(Y^{(0)}_s)\,\dif s+\Phi^{(1)}_t.
\end{equation}
For $n \in \nat$, we define recursively
\begin{equation} \label{acc-e32}
\begin{aligned}
    &u^{(n+1)}_{\ell_t}=-\frac{1}{\ell_{\tilde T}} \int_0^{t} Q(Y^{(n)}_{s-})^{-1} \eup^{-sA} x\,\dif \ell_s,\\
    &\Phi^{(n+1)}_t=\eup^{-tA} x+\int_0^t \eup^{-(t-s)A} Q(Y^{(n)}_{s-})\,\dif u^{(n+1)}_{\ell_s},\\
    &Y^{(n+1)}_t=\int_0^t \eup^{-(t-s)A} F(Y^{(n)}_s)\,\dif s+\Phi^{(n+1)}_t.
\end{aligned}
\end{equation}
The first two equalities yield
\begin{gather*}
    \Phi^{(n+1)}_t=\frac{\ell_{\tilde T}-\ell_{t}}{\ell_{\tilde T}} \eup^{-tA} x.
\end{gather*}
Therefore, we have
\begin{equation}\label{acc-e34}
\begin{aligned}
    |Y^{(n+1)}_t-Y^{(n)}_t|
    &\leq \int_0^t |\eup^{-(t-s)A} F(Y^{(n)}_s)-\eup^{-(t-s)A} F(Y^{(n-1)}_s)|\,\dif s\\
    &\leq \int_0^t \|F\|_{\textrm{Lip}} |Y^{(n)}_s-Y^{(n-1)}_s|\,\dif s.
\end{aligned}
\end{equation}
Since $\tilde T < {1}/{\|F\|_{\textrm{Lip}}}$, we see that
\begin{align*}
    \sup_{0 \leq t \leq \tilde T} |Y^{(n+1)}_t-Y^{(n)}_t| &\leq \tilde T \|F\|_{\textrm{Lip}} \sup_{0 \leq t \leq \tilde T} |Y^{(n)}_t-Y^{(n-1)}_t|\\
    &\leq\dots\leq \left(\tilde T \|F\|_{\textrm{Lip}}\right)^{n} \sup_{0 \leq t \leq \tilde T} |Y^{(1)}_t-Y^{(0)}_t|.
\end{align*}
So there exists some uniformly bounded $(Y_t)_{0 \leq t \leq \tilde T}$, which is right continuous and has left limits, such that
\begin{gather*}
    \lim_{n \to \infty} \sup_{0 \leq t \leq \tilde T} |Y^{(n)}_t-Y_t|=0.
\end{gather*}
Letting $n \to \infty$ in \eqref{acc-e32}, we obtain
\begin{align}
    &u_{\ell_t}=-\frac{1}{\ell_{\tilde T}} \int_0^{t} Q(Y_{t-})^{-1} \eup^{-sA} x\,\dif \ell_s,  \label{acc-e40} \\
    &\Phi_t=\eup^{-tA} x+\int_0^t \eup^{-(t-s)A} Q(Y_{s-})\,\dif u_{\ell_s}, \label{acc-e42} \\
    &Y_t=\int_0^t \eup^{-(t-s)A} F(Y_s)\,\dif s+\Phi_t.   \label{acc-e44}
\end{align}
From the first two equalities we see that $\Phi_{\tilde T}=0.$
For $\epsilon >0$, we get
\begin{gather*}
    |Y_{\tilde T}|
    =\left|\int_0^{\tilde T} \eup^{-(t-s)A} F(Y_s)\,\dif s\right|
    \leq \|F\| \tilde T
    \leq \frac{\epsilon}2,
    \quad \tilde T < \min\left\{\frac{\epsilon}{2 \|F\|}, \frac{1}{2\|F\|_{\textrm{Lip}}}\right\}.
\end{gather*}
Recall that
\begin{equation}\label{acc-e46}
    X_{t}=\int_0^{t} \eup^{-(t-s)A} F(X_s)\,\dif t+Z_{t},
\end{equation}
where $Z_{t}=\eup^{-tA} x+\int_0^{t} \eup^{-(t-s)A} Q(X_{s-}) \,\dif W_{\ell_s}$, then
\begin{equation}\label{acc-e48}
\begin{aligned}
    |X_{t}-Y_{t}|
    &\leq \int_0^{t}  \left|\eup^{-(t-s)A} [F(X_s)-F(Y_s)]\right|\,\dif s+|Z_{t}-\Phi_{t}|\\
    &\leq \int_0^{t} \|F\|_{\textrm{Lip}} |X_s-Y_s|\,\dif s+|Z_{t}-\Phi_{t}|.
\end{aligned}
\end{equation}
By \eqref{acc-e40}, $(u_{\ell_t})_{0 \leq t \leq T}$, as a function which is right continuous with left limit, can be embedded into a continuous function
$u \in C([0,\ell_T]; H)$ defined by
\begin{gather*}
    u_{t}
    =\frac{1}{\ell_T} \int_0^{t} Q(Y_{\ell^{-1}_s-})^{-1} \eup^{-\ell^{-1}_sA} x\,\dif s,\quad \forall t \in [0,\ell_T].
\end{gather*}
Because of \eqref{acc-e12}, the function $u_t$ is well-defined. Moreover, because of \eqref{acc-e14}, we have $\dot{u} \in L^2([0,\ell_T]; H)$.

Since $Q$ is Lipschitz by the assumption \eqref{A1},
\begin{equation}\label{acc-e50}
\begin{aligned}
    |Z_{t}-\Phi_{t}|
    &\leq \left|\int_0^t Q(X_{s})\,\dif u_{\ell_s}-\int_{0}^{t}Q(Y_{s})\,\dif u_{\ell_s}\right|+D(t,Q,X,W,u)\\
    &\leq \|Q\|_{\textrm{Lip}}\int_0^t |X_{s}-Y_{s}|\,|\dif u_{\ell_s}|+D(t,Q,X,W,u),
\end{aligned}
\end{equation}
where
\begin{align*}
    D(t,Q,X,W,u)
    &=\left|\int_0^t Q(X_{s-})\,\dif W_{\ell_s}-\int_0^t Q(X_{s-})\,\dif u_{\ell_s}\right|\\
    &=\left|\int_0^{\ell_t} Q\left(X_{\theta_s-}\right) \dif W_{s}-\int_0^{\ell_t} Q\left(X_{\theta_s-}\right) \dif u_s\right|,
\end{align*}
with $\theta_s=\ell^{-1}_s$ for all $s>0$. Hence,
\begin{align*}
    |X_{t}-Y_{t}|
    &\leq \int_0^{t}  \left|\eup^{-(t-s)A} [F(X_s)-F(Y_s)]\right|\,\dif s+|Z_{t}-\Phi_{t}|\\
    &\leq \|F\|_{\textrm{Lip}} \int_0^{t}  |X_s-Y_s|\,\dif s+\|Q\|_{\textrm{Lip}}\int_0^t |X_{s}-Y_{s}|\,|\dif u_{\ell_s}|+D^*(t,Q,X,W,u),
\end{align*}
where $D^*(t,Q,X,W,u)=\sup_{0 \leq s \leq t} D(s,Q,X,W,u)$.
As we have seen earlier, $\dot u \in L^2([0, \ell_T];H)$, and so \cite[Theorem 7.4.1]{DaZa96} yields
\begin{equation}\label{acc-e52}
    \Pp\left(D^*(t,Q,X,W,u) \leq \gamma\right)>0 \qquad \forall \gamma>0.
\end{equation}
By Gronwall's inequality (recall that $u$ has finite total variation), we have
\begin{align*}
    |X_t-Y_t|
    &\leq D^*(t, Q,X,W,u) \exp\left[\|F\|_{\textrm{Lip}} t+\|Q\|_{\textrm{Lip}}\|u\|_{\textrm{TV$[0,\ell_t]$}}\right].
\end{align*}
In view of the previous two inequalities, we obtain
\begin{gather*}
    \Pp(|X_{\tilde T}-Y_{\tilde T}|<\epsilon/2)>0.
\end{gather*}
Since $|Y_{\tilde T}|<\epsilon/2$, this further implies
\begin{equation} \label{acc-e60}
    \Pp(|X_{\tilde T}|<\epsilon)>0.
\end{equation}

Recall that the initial datum of $X$ is $x$, which can be an arbitrary point in $H$; \eqref{acc-e60} implies that the
transition probability $P_{\tilde T}(x,B(0,\epsilon))$ satisfies
\begin{gather*}
    P_{\tilde T}(x,B(0,\epsilon))>0 \qquad \forall x \in H.
\end{gather*}
Combining the Chapman--Kolmogorov equations and the above inequality, we get
\begin{gather*}
    P_{2\tilde T}(x,B(0,\epsilon))
    =\int_{H} P_{\tilde T}(y,B(0,\epsilon)) P_{\tilde T}(x,\dif y)
    > 0.
\end{gather*}
Using the above argument repeatedly, we see that for all $n \in \nat$
\begin{gather*}
    P_{n\tilde T}(x,B(0,\epsilon)) > 0.
\end{gather*}
Since $\tilde T \in (0,t_0)$ and $\epsilon >0$ are both arbitrary, $X$ is accessible to zero.
\end{proof}

\section{Galerkin approximation}\label{gal}

For every $n \in \nat$, we define an orthogonal projection $\Pi_n: H \to H_n$, where $H_n$ is the subspace of
 $H$ generated by $\{e_1,\dots,e_n\}$; that is, for any $x \in  H$ with the orthogonal expansion
 $x=\sum_{k=1}^\infty x_k e_k$, we have
$\Pi_n x=\sum_{k=1}^n x_k e_k\in H_n$.

The Galerkin approximations of Eq.~\eqref{int-e02} and Eq.~\eqref{int-e12} in $H_n$ are, respectively, as follows:
\begin{equation} \label{gal-e10}
    \dif X^n_t
    = [-AX^n_t+F^n(X^n_t)]\,\dif t+Q^n(X^n_{t-})\,\dif L^n_t,
    \quad X^n_0=x^n,
\end{equation}
and
\begin{equation} \label{gal-e12}
    \dif X^{n, \ell}_t
    = [-AX^{n, \ell}_t+F^n(X^{n, \ell}_t)]\,\dif t+Q^n(X^{n,\ell}_{t-})\,\dif W^n_{\ell_t},
    \quad X^{n,\ell}_0=x^n,
\end{equation}
where $x^n=\Pi_nx$, $F^n=\Pi_n F$, and $Q^n=\Pi_n Q \Pi_n$.

The main result of this section is the following.

\begin{theorem} \label{gal-13}
Assume that \eqref{A1} and \eqref{A3} hold.
For any $T>0$ and $\delta>0$,
\begin{equation}\label{gal-e14}
    \lim_{n \to \infty} \Pp\left(\sup_{0 \leq t\leq T} |X^n_t-X_t|>\delta\right) = 0.
\end{equation}
\end{theorem}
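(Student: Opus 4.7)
The strategy is to condition on the subordinator trajectory and reduce the claim to a Galerkin convergence statement for the auxiliary SDE \eqref{int-e02-1} driven by the time-changed Wiener process $W_{\ell}$. Fix $T,\delta>0$. Using the product construction of Section~\ref{con} (cf.\ Lemma~\ref{l:DPWPS}), it suffices to establish two ingredients: first, for $\Pp^{\mathds{S}}$-a.e.\ path $\ell$ with $\ell_T<\infty$,
\begin{gather*}
    \lim_{n\to\infty}\Ee^{\mathds{W}}\left[\sup_{0\leq t\leq T}|X^{n,\ell}_t-X^\ell_t|^2\right]=0;
\end{gather*}
and second, a uniform-in-$n$ domination of this pathwise quantity on the events $\{\ell_T\leq m\}$, which will let us remove the conditioning.

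For the pathwise step I would work from the mild formulation
\begin{align*}
    X^{n,\ell}_t-X^\ell_t
    &=\eup^{-tA}(x^n-x) +\int_0^t\eup^{-(t-s)A}\bigl[F^n(X^{n,\ell}_s)-F(X^\ell_s)\bigr]\,\dif s\\
    &\qquad+\int_0^t\eup^{-(t-s)A}\bigl[Q^n(X^{n,\ell}_{s-})-Q(X^\ell_{s-})\bigr]\,\dif W_{\ell_s},
\end{align*}
splitting each bracket into a Galerkin-projection piece (e.g.\ $F^n(X^\ell_s)-F(X^\ell_s)$) and a Lipschitz piece (e.g.\ $F^n(X^{n,\ell}_s)-F^n(X^\ell_s)$). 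Since $\langle W_\ell,W_\ell\rangle_t=\ell_t$ as noted after \eqref{int-e02-1}, It\^o's isometry for the Hilbert-space-valued stochastic integral yields an $L^2(\Pp^{\mathds W})$-bound against $\dif\ell_s$, and combined with $\|\eup^{-(t-s)A}\|\leq 1$ and the Lipschitz conditions from \eqref{A1}, \eqref{A3}, this produces
\begin{gather*}
    \Ee^{\mathds{W}}\sup_{0\leq r\leq t}|X^{n,\ell}_r-X^\ell_r|^2
    \leq C\bigl[|x^n-x|^2+R_n(\ell,T)\bigr]
    +C\int_0^t \Ee^{\mathds{W}}\sup_{0\leq r\leq s}|X^{n,\ell}_r-X^\ell_r|^2\bigl(\dif s+\dif\ell_s\bigr),
\end{gather*}
where $R_n(\ell,T)$ aggregates the projection remainders $\int_0^T\Ee^{\mathds W}\|Q^n(X^\ell_{s-})-Q(X^\ell_{s-})\|^2_{\mathrm{HS}}\,\dif\ell_s$ and $\int_0^T\Ee^{\mathds W}|F^n(X^\ell_s)-F(X^\ell_s)|^2\,\dif s$. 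Because $\Pi_n\to I$ strongly on $H$ and $F(y)\in H$, $Q(y)\in\mathcal L_{\mathrm{HS}}(H)$, the integrands converge to $0$ pointwise and are dominated by $4\|F\|^2_\infty$ and $4\|Q\|^2_{\mathrm{HS},\infty}$; dominated convergence gives $R_n(\ell,T)\to 0$ whenever $\ell_T<\infty$. Stieltjes Gronwall against the finite measure $\dif s+\dif\ell_s$ then closes the pathwise step:
\begin{gather*}
    \Ee^{\mathds{W}}\sup_{0\leq t\leq T}|X^{n,\ell}_t-X^\ell_t|^2
    \leq C\eup^{C(T+\ell_T)}\bigl[|x^n-x|^2+R_n(\ell,T)\bigr]\xrightarrow[n\to\infty]{}0.
\end{gather*}

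To convert pathwise $L^2$-convergence into \eqref{gal-e14} I fix $\eta>0$ and choose $m$ with $\Pp^{\mathds{S}}(S_T>m)<\eta$. Chebyshev's inequality and Lemma~\ref{l:DPWPS} give
\begin{align*}
    \Pp\Bigl(\sup_{0\leq t\leq T}|X^n_t-X_t|>\delta\Bigr)
    &\leq \Pp^{\mathds S}(S_T>m) +\delta^{-2}\Ee^{\mathds S}\Bigl[\I_{\{S_T\leq m\}}\,\Ee^{\mathds{W}}\sup_{0\leq t\leq T}|X^{n,\ell}_t-X^\ell_t|^2\Big|_{\ell=S}\Bigr].
\end{align*}
On $\{\ell_T\leq m\}$ the inner expectation is uniformly-in-$n$ dominated by an $m$-dependent constant (an a priori bound obtained via Lemma~\ref{con-27} applied to both $Z^{n,\ell}$ and $Z^\ell$, using $\|Q^n\|_{\mathrm{HS}}\leq\|Q\|_{\mathrm{HS}}$ and the uniform bound $\|F\|_\infty$), so dominated convergence forces the second summand to $0$ as $n\to\infty$; finally $\eta\to 0$.

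The main technical obstacle will be the Gronwall constant $\eup^{C\ell_T}$: $\ell_T$ is unbounded under $\Pp^{\mathds{S}}$, so one cannot take expectations in $\ell$ before passing to the limit. The localization on $\{S_T\leq m\}$ sidesteps this, in the same spirit as the conditioning argument used in Proposition~\ref{acc-11} and Theorem~\ref{acc-15}. A secondary subtlety is justifying the pointwise Hilbert--Schmidt convergence $\|Q^n(y)-Q(y)\|_{\mathrm{HS}}\to 0$: it does \emph{not} follow from operator-norm convergence of $\Pi_n$ (which fails), but from strong convergence $\Pi_n\to I$ together with $Q(y)\in\mathcal L_{\mathrm{HS}}(H)$, via the decomposition $\|\Pi_n Q(y)\Pi_n-Q(y)\|_{\mathrm{HS}}\leq\|(\Pi_n-I)Q(y)\|_{\mathrm{HS}}+\|Q(y)(\Pi_n-I)\|_{\mathrm{HS}}\to 0$.
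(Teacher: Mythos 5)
Your proposal follows essentially the same route as the paper: a pathwise Gronwall estimate for $\Ee^{\mathds{W}}\bigl[\sup_{0\le t\le T}|X^{n,\ell}_t-X^\ell_t|^2\bigr]$ conditioned on the subordinator path $\ell$ (the paper's Lemma~\ref{gal-15}), combined with Chebyshev's inequality, localization on $\{S_T\le m\}$, and dominated convergence. The only cosmetic differences are that you derive the pathwise bound from the mild formulation whereas the paper applies It\^o's formula to $|X^\ell_t-X^{n,\ell}_t|^2$ together with the Burkholder--Davis--Gundy inequality, and that you obtain the uniform domination on $\{\ell_T\le m\}$ from an a priori maximal bound rather than directly from the Gronwall constant $C\eup^{C(T+\ell_T)}$ in \eqref{gal-e20}; both choices are equivalent in substance.
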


The proof of Theorem \ref{gal-13} relies on the following lemma.

\begin{lemma} \label{gal-15}
Assume that \eqref{A1} and \eqref{A3} hold. For all $T>0$, we have
\
\begin{equation} \label{gal-e16}
    \lim_{n \to \infty} \Ee^{\mathds{W}} \left[\sup_{0 \leq t \leq T} |X^{n,\ell}_t-X^{\ell}_t|^2\right] = 0,
\end{equation}
\end{lemma}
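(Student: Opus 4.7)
The plan is to work in mild form throughout and show that the difference $\Delta_n(t) := X^{n,\ell}_t - X^{\ell}_t$ satisfies a Gronwall-type estimate in the second moment under $\Ee^{\mathds{W}}$. Subtracting the mild formulations and using that $\Pi_n$ commutes with $A$ (hence with $\eup^{-tA}$) because $\Pi_n$ projects onto eigenvectors of $A$, one obtains
\begin{equation*}
    \Delta_n(t)
    = -(I-\Pi_n)\eup^{-tA}x
    + \int_0^t \eup^{-(t-s)A}\bigl[F^n(X^{n,\ell}_s)-F(X^{\ell}_s)\bigr]\,\dif s
    + M^n_t,
\end{equation*}
where $M^n_t := \int_0^t \eup^{-(t-s)A}\bigl[Q^n(X^{n,\ell}_{s-})-Q(X^{\ell}_{s-})\bigr]\,\dif W_{\ell_s}$. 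The initial-data term is easy: $|(I-\Pi_n)\eup^{-tA}x|\leq |(I-\Pi_n)x|\to 0$ uniformly in $t$.

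For the drift integral, split $F^n(X^{n,\ell}_s)-F(X^{\ell}_s) = \Pi_n[F(X^{n,\ell}_s)-F(X^{\ell}_s)] + (\Pi_n-I)F(X^{\ell}_s)$. The first summand is bounded by $L_F|\Delta_n(s)|$; the second is bounded pointwise by $2\|F\|_\infty$ and tends to $0$ pointwise in $s$ by strong convergence $\Pi_n\to I$, so its $L^2(0,T;H)$-norm vanishes by dominated convergence. Combined with Cauchy--Schwarz and $\|\eup^{-(t-s)A}\|\leq 1$, this gives a bound of the form $C_T L_F^2\int_0^T|\Delta_n(s)|^2\,\dif s + \eta_n^{(F)}$ with $\eta_n^{(F)}\to 0$.

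For the stochastic convolution $M^n$, the key step is to mimic the It\^o-formula calculation of Lemma~\ref{con-27} applied to $|M^n_t|^2$: using $M^n_0=0$, Doob's $L^2$-maximal inequality, It\^o's isometry for the time-changed martingale $W_{\ell_\cdot}$, and absorbing the resulting $\sqrt{\Ee^{\mathds{W}}\sup|M^n_\cdot|^2}$ term via the Cauchy--Schwarz--Young trick, one obtains
\begin{equation*}
    \Ee^{\mathds{W}}\sup_{0\leq t\leq T}|M^n_t|^2
    \leq C\int_0^T \Ee^{\mathds{W}}\bigl\|Q^n(X^{n,\ell}_{s-})-Q(X^{\ell}_{s-})\bigr\|_{\mathrm{HS}}^2\,\dif\ell_s.
\end{equation*}
The integrand is then split as in the drift case: $\|Q^n(X^{n,\ell}_{s-})-Q^n(X^{\ell}_{s-})\|_{\mathrm{HS}}\leq L_Q|\Delta_n(s-)|$ (since $\|\Pi_n\|\leq 1$), while $\|Q^n(X^{\ell}_{s-})-Q(X^{\ell}_{s-})\|_{\mathrm{HS}}\to 0$ pointwise (because $\Pi_n T\Pi_n\to T$ in HS-norm for every $T\in\mathcal{L}_{\mathrm{HS}}(H)$) and is uniformly bounded by $2\|Q\|_{\mathrm{HS},\infty}$, so its integral against $\dif\ell_s$ on $[0,T]$ vanishes by dominated convergence (using $\ell_T<\infty$).

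Setting $\phi_n(t) := \Ee^{\mathds{W}}\sup_{0\leq u\leq t}|\Delta_n(u)|^2$ and collecting the estimates yields
\begin{equation*}
    \phi_n(T)
    \leq \varepsilon_n + C_1\int_0^T\phi_n(s)\,\dif s + C_2\int_0^T\phi_n(s)\,\dif\ell_s
\end{equation*}
with $\varepsilon_n\to 0$. A standard Gronwall inequality for the mixed measure $\dif s+\dif\ell_s$ (finite on $[0,T]$ since $\ell_T<\infty$) then gives $\phi_n(T)\leq \varepsilon_n\exp(C_1T+C_2\ell_T)\to 0$, which is \eqref{gal-e16}. I expect the main obstacle to be the stochastic convolution bound: one needs the $\int_0^T(\cdot)\,\dif\ell_s$ form (not a weaker pointwise-in-$t$ bound) to close the Gronwall loop, which forces the It\^o-formula route of Lemma~\ref{con-27} rather than a naive Burkholder-type estimate after pulling out $\eup^{-(t-s)A}$.
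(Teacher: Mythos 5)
Your proposal follows essentially the same strategy as the paper: set up a Gronwall-type inequality for $\phi_n(t)=\Ee^{\mathds{W}}\sup_{0\leq s\leq t}|X^{n,\ell}_s-X^\ell_s|^2$ with driving measure $\dif s+\dif\ell_s$, push the genuinely $n$-dependent part into an $n$-independent quantity, and kill that quantity by dominated convergence using $\Pi_n\to I$ strongly. The route is slightly different---the paper applies It\^o's formula directly to the squared norm of the \emph{strong}-form difference $X^\ell_t-X^{n,\ell}_t$, discards the nonpositive $-2\int_0^t|A^{1/2}(\cdot)|^2\,\dif s$ term and treats drift and martingale parts in one go, while you subtract the mild formulations, pull out the semigroup and handle the three pieces separately---and your split of $Q^n(X^{n,\ell})-Q(X^\ell)$ through $Q^n(X^\ell)$ is in fact cleaner for the dominated-convergence step than the paper's split through $Q(X^{n,\ell})$, since your residual term depends only on the $n$-independent process $X^\ell$.

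There is, however, a genuine gap in exactly the step you flag as the main obstacle. With $G(s)=Q^n(X^{n,\ell}_{s-})-Q(X^\ell_{s-})$, $a_T=\Ee^{\mathds{W}}\sup_{t\leq T}|M^n_t|^2$ and $b=\int_0^T\Ee^{\mathds{W}}\|G(s)\|^2_{\mathrm{HS}}\,\dif\ell_s$, the Lemma~\ref{con-27} route with Doob's $L^2$-maximal inequality gives
\begin{equation*}
    a_T\leq 4\left(\int_0^T\Ee^{\mathds{W}}\big[|M^n_{s-}|^2\,\|G(s)\|^2_{\mathrm{HS}}\big]\dif\ell_s\right)^{1/2}+b,
\end{equation*}
and here the random factor $\|G(s)\|^2_{\mathrm{HS}}$ cannot be separated from $|M^n_{s-}|^2$ under $\Ee^{\mathds{W}}$ by Cauchy--Schwarz without invoking fourth moments; the crude bound $\|G\|_{\mathrm{HS}}\leq 2\|Q\|_{\mathrm{HS},\infty}$ leaves, after Young, an extra $O(\ell_T)$ term that does \emph{not} vanish as $n\to\infty$. (In Lemma~\ref{con-27} this is harmless because both the a~priori $L^2$ bound and the target are themselves $O(\ell_T)$; here it is fatal, since you need $a_T\lesssim b$ with $b\to 0$.) The fix---and what the paper actually uses---is the Burkholder--Davis--Gundy inequality with exponent $p=1$:
\begin{equation*}
    \Ee^{\mathds{W}}\sup_{t\leq T}\left|\int_0^t\langle M^n_{s-},G(s)\,\dif W_{\ell_s}\rangle\right|
    \leq C\,\Ee^{\mathds{W}}\left[\left(\int_0^T|M^n_{s-}|^2\|G(s)\|^2_{\mathrm{HS}}\,\dif\ell_s\right)^{1/2}\right],
\end{equation*}
which lets you factor \emph{pathwise}, $\big(\int_0^T|M^n_{s-}|^2\|G\|^2\,\dif\ell\big)^{1/2}\leq\sup_{s\leq T}|M^n_s|\cdot\big(\int_0^T\|G\|^2\,\dif\ell\big)^{1/2}$, \emph{before} applying Cauchy--Schwarz in $\omega$ (which now only costs second moments) and then Young's inequality, yielding $a_T\leq Cb$ as you intended. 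So your claimed bound is correct, but the tool you cite (Doob's $L^2$) cannot deliver it; you must replace it with BDG, $p=1$.
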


\begin{proof}
Throughout this proof, $C$ denotes some generic constant which may change its value from line to line. Observe that
\begin{align*}
    \dif(X^\ell_t-X^{n,\ell}_t)
    &= -A (X^\ell_t-X^{n,\ell}_t)\,\dif t+\left[F(X^\ell_t)-F^n(X^{n,\ell}_t)\right] \dif t\\
    &\qquad\mbox{}+\left[Q(X^\ell_{t-})-Q(X^{n,\ell}_{t-})+Q(X^{n,\ell}_{t-})-Q^{n}(X^{n,\ell}_{t-})\right] \dif W_{\ell_t},
\end{align*}
and
\begin{align*}
    \langle X^\ell_s-X^{n,\ell}_s, F(X^\ell_s)-F^n(X^{n,\ell}_s)\rangle
    &= \langle X^\ell_s-X^{n,\ell}_s, F(X^\ell_s)-F^n(X^{\ell}_s)\rangle\\
    &\quad\mbox{}+\langle X^\ell_s-X^{n,\ell}_s, F^n(X^{n,\ell}_s)-F^n(X^{\ell}_s)\rangle\\
    &= \langle X^\ell_s-X^{n,\ell}_s, F(X^\ell_s)-F^n(X^{\ell}_s)\rangle,
\end{align*}
where the last equality uses the fact that $X^\ell_s-X^{n,\ell}_s$ and $F(X^\ell_s)-F^n(X^{\ell}_s)$ are orthogonal. By It\^{o}'s formula, we have
\begin{equation}\label{gal-e18}
\begin{aligned}
    |X^\ell_t-X^{n,\ell}_t|^2
    &= |x-x^n|^2-2 \int_0^t |A^{\frac 12}(X^\ell_s-X^{n,\ell}_s)|^2\,\dif s\\
    &\quad\mbox{}+2 \int_0^t \langle X^\ell_s-X^{n,\ell}_s, F(X^\ell_s)-F^n(X^{\ell}_s)\rangle\,\dif s+2 \Mcal_t+[\Mcal,\Mcal]_t,
\end{aligned}
\end{equation}
where
\begin{align*}
    \Mcal_t
    &:= \int_0^t \left\langle X^\ell_{s-}-X^{n,\ell}_{s-}, \left[Q(X^\ell_{s-})-Q(X^{n,\ell}_{s-})+Q(X^{n,\ell}_{s-})-Q^{n}(X^{n,\ell}_{s-})\right] \dif W_{\ell_s}\right\rangle,\\
    [\Mcal,\Mcal]_t
    &:= \sum_{0 < s \leq t} \left|\left[Q(X^\ell_{s-})-Q(X^{n,\ell}_{s-}) +Q(X^{n,\ell}_{s-})-Q^{n}(X^{n,\ell}_{s-})\right] \Delta W_{\ell_s}\right|^2.
\end{align*}
For $t\geq 0$, set
\begin{gather*}
    \Lambda_{n,t}^\ell:=\Ee^{\mathds{W}}\left[\sup_{0 \leq s \leq t}|X^\ell_s-X^{n,\ell}_s|^2\right].
\end{gather*}
First, we have
\begin{align*}
    &\Ee^{\mathds{W}} \left[\sup_{0 \leq s\leq t} \left|\int_0^s \langle X^\ell_r-X^{n,\ell}_r, F(X^\ell_r)-F^n(X^{\ell}_r)\rangle\,\dif r\right|\right]\\
    &\leq \frac 12 \Ee^{\mathds{W}} \left[\sup_{0 \leq s\leq t} \int_0^s |X^\ell_r-X^{n,\ell}_r|^2\,\dif r\right]
    + \frac 12 \Ee^{\mathds{W}} \left[\sup_{0 \leq s\leq t} \int_0^s |F(X^\ell_r)-F^n(X^{\ell}_r)|^2  \,\dif r\right]\\
    &\leq \frac 12  \int_0^t \Ee^{\mathds{W}} \left[|X^\ell_r-X^{n,\ell}_r|^2 \right] \dif r
    + \frac 12 \int_0^t \Ee^{\mathds{W}} \left[|F(X^\ell_r)-F^n(X^{\ell}_r)|^2\right] \dif r\\
    &\leq \frac 12 \int_0^t \Lambda_{n,s}^\ell\,\dif s
    + \frac 12\int_0^t \Ee^{\mathds{W}} \left[|F(X^\ell_s)-F^n(X^{\ell}_s)|^2\right]\dif s.
\end{align*}
By the Burkholder-Davis-Gundy inequality with $p=1$ and \eqref{A1}, we obtain
\begin{align*}
    &\Ee^{\mathds{W}}\left[\sup_{0 \leq s \leq t} |\Mcal_s|\right]\\
    &\leq C \Ee^{\mathds{W}} \left[\int_0^t |X^\ell_{s-}-X^{n,\ell}_{s-}|^2 \|Q(X^\ell_{s-})-Q(X^{n,\ell}_{s-})+Q(X^{n,\ell}_{s-})-Q^{n}(X^{n,\ell}_{s-})\|^2_\mathrm{HS}\,\dif \ell_s\right]^{\frac 12}\\
    &\leq C \Ee^{\mathds{W}} \left[\sup_{0 \leq s \leq t} |X^\ell_{s-}-X^{n,\ell}_{s-}|
    \left(\int_0^t\|Q(X^\ell_{s-})-Q(X^{n,\ell}_{s-})+Q(X^{n,\ell}_{s-})-Q^{n}(X^{n,\ell}_{s-})\|^2_\mathrm{HS}\,\dif \ell_s\right)^{\frac 12}\right]\\
    &\leq \frac 12 \Lambda_{n,t}^\ell+\frac12 C^{2}\int_0^t
    \Ee^{\mathds{W}} \left[\|Q(X^\ell_{s-})-Q(X^{n,\ell}_{s-})+Q(X^{n,\ell}_{s-})-Q^{n}(X^{n,\ell}_{s-})\|^2_\mathrm{HS}\right] \dif \ell_s\\
    &\leq \frac 12 \Lambda_{n,t}^\ell
    + C^{2} \int_0^t \left(
        \Ee^{\mathds{W}} \left[|X^\ell_{s-}-X^{n,\ell}_{s-}|^2\right]
        + \Ee^{\mathds{W}} \left[\|Q(X^{n,\ell}_{s-})-Q^{n}(X^{n,\ell}_{s-})\|^2_\mathrm{HS}\right]
        \right) \dif \ell_s\\
    &\leq \frac 12 \Lambda_{n,t}^\ell
    + C^{2} \int_0^t \left(\Lambda_{n,s}^\ell
    + \Ee^{\mathds{W}} \left[\|Q(X^{n,\ell}_{s-})-Q^{n}(X^{n,\ell}_{s-})\|^2_\mathrm{HS}\right] \right) \dif \ell_s.
\end{align*}
By the It\^{o} isometry and \eqref{A1},

\allowdisplaybreaks[0]
\begin{align*}
    &\Ee^{\mathds{W}}\left[\sup_{0 \leq s \leq t} [\Mcal,\Mcal]_s\right]\\
    &= \sum_{0 < r \leq t} \Ee^{\mathds{W}} \left[\left|\left[Q(X^\ell_{r-})-Q(X^{n,\ell}_{r-})+Q(X^{n,\ell}_{r-})-Q^{n}(X^{n,\ell}_{r-})\right] \Delta W_{\ell_r}\right|^2\right]\\
    &\leq 2 \int_0^t \left(\Ee^{\mathds{W}} \left[\|Q(X^\ell_{s-})-Q(X^{n,\ell}_{s-})\|^2_\mathrm{HS}\right]
    + \Ee^{\mathds{W}} \left[\|Q(X^{n,\ell}_{s-})-Q^{n}(X^{n,\ell}_{s-})\|^2_\mathrm{HS}\right]\right) \dif \ell_s\\
    &\leq C \int_0^t  \left(\Ee^{\mathds{W}} \left[|X^\ell_{s-}-X^{n,\ell}_{s-}|^2\right]
    + \Ee^{\mathds{W}} \left[\|Q(X^{n,\ell}_{s-})-Q^{n}(X^{n,\ell}_{s-})\|^2_\mathrm{HS}\right]\right) \dif \ell_s\\
    &\leq C \int_0^t \left(\Lambda_{n,s}^\ell
    + \Ee^{\mathds{W}}\left[\|Q(X^{n,\ell}_{s-})-Q^{n}(X^{n,\ell}_{s-})\|^2_\mathrm{HS}\right]\right) \dif \ell_s.
\end{align*}
Combining the previous three inequalities with \eqref{gal-e18} and moving the term $\frac 12 \Lambda_{n,t}^\ell$ to the left hand side, we get
\begin{align*}
    \frac 12\Lambda_{n,t}^\ell \le|x-x^n|^2 + C\int_0^t \Lambda_{n,s}^\ell \,(\dif s+\dif \ell_s)
    &+C\int_0^t \Ee^{\mathds{W}} \left[\|Q(X^{n,\ell}_{s-})-Q^{n}(X^{n,\ell}_{s-})\|^2_\mathrm{HS}\right]\dif \ell_s\\
    &+C \int_0^t \Ee^{\mathds{W}}\left[|F(X^\ell_s)-F^n(X^{\ell}_s)|^2\right]\dif s.
\end{align*}
This, together with Gronwall's inequality, implies that for all $t>0$
\begin{equation}\label{gal-e20}
\begin{aligned}
    \Lambda_{n,t}^\ell
    \leq C\eup^{C(t+\ell_t)} \left(|x-x^n|^2 + \int_0^t\right. & \Ee^{\mathds{W}} \left[\|Q(X^{n,\ell}_{s-})-Q^{n}(X^{n,\ell}_{s-})\|^2_\mathrm{HS}\right]\dif \ell_s\\
    &\quad\mbox{}+\left.\int_0^t \Ee^{\mathds{W}} \left[|F(X^\ell_s) -F^n(X^{\ell}_s)|^2\right]\dif s\right).
\end{aligned}
\end{equation}
By \eqref{A1}, as $n \to \infty$,
\begin{gather*}
\begin{split}
    \|Q(x^n)-Q^{n}(x^{n})\|^2_{\mathrm{HS}}
    & \leq 4\|Q(x^{n})-Q(x)\|^2_{\mathrm{HS}} \\
    &\quad \quad+4\|\Pi_{n}(Q(x^{n})-Q(x))\Pi_{n}\|^2_{\mathrm{HS}}+4\|Q^{n}(x)-Q(x)\|^2_{\mathrm{HS}} \to 0.
\end{split}
\end{gather*}
By the dominated convergence theorem and the previous two relations, we get
\begin{gather*}
    \lim_{n \to \infty} \Lambda_{n,t}^\ell = 0,
\end{gather*}
which completes the proof.
\end{proof}

\begin{proof}[Proof of Theorem \ref{gal-13}]
By Chebyshev's inequality, for $m\in\nat$,
\begin{equation}\label{gal-e22}
\begin{aligned}
    &\Pp\left(\sup_{0 \leq t\leq T} |X^n_t-X_t|>\delta\right)
    = \Ee^{\mathds{S}}\left[\left.\Pp^{\mathds{W}}
    \left(\sup_{0 \leq t\leq T}|X^{n,\ell}_t-X^{\ell}_t|>\delta\right)\right|_{\ell=S}\right]\\
    &\qquad\qquad=
    \Ee^{\mathds{S}}\left[\I_{\{\ell_T\leq m\}}\left.\Pp^{\mathds{W}}\left(\sup_{0 \leq t\leq T}|X^{n,\ell}_t-X^{\ell}_t|>\delta\right)\right|_{\ell=S}\right]\\
    &\qquad\qquad\quad\mbox{}
    + \Ee^{\mathds{S}}\left[\I_{\{\ell_T>m\}}
    \left.\Pp^{\mathds{W}}\left(\sup_{0 \leq t\leq T}|X^{n,\ell}_t-X^{\ell}_t|>\delta\right)\right|_{\ell=S}\right]\\
    &\qquad\qquad\leq
    \delta^{-2}\Ee^{\mathds{S}}\left[\I_{\{\ell_T\leq m\}}\left.\Lambda_{n,T}^\ell\right|_{\ell=S}\right]+\Pp\left(S_T>m\right).
\end{aligned}
\end{equation}
Because of the bound \eqref{gal-e20} we can use dominated convergence. From Lemma \ref{gal-15} we get
\begin{gather*}
    \lim_{n\to\infty}\Ee^{\mathds{S}}\left[\I_{\{\ell_T\leq m\}}\left.\Lambda_{n,T}^\ell\right|_{\ell=S}\right]
    =0  \quad\text{for all $m\in\nat$}.
\end{gather*}
Since $\Pp\left(S_T>m\right)\to0$
as $m\to\infty$, we can finish the proof by
letting first $n\to\infty$ and
then $m\to\infty$ in \eqref{gal-e22}.
\end{proof}

\noindent
\textbf{Acknowledgement.} We are grateful to the associate editor and the anonymous referees for their helpful comments and professional handling. The research of L.\ Xu is supported in part by NSFC No. 12071499,  Macao S.A.R grant FDCT  0090/2019/A2 and University of Macau grant  MYRG2018-00133-FST. R.L.\ Schilling was supported through the joint Polish--German NCN--DFG `Beethoven 3' grant (NCN 2018/31/G/ST1/02252; DFG SCHI 419/11-1).

\bibliographystyle{amsplain}

\end{document}